\newromanexpr\Hess{Hess}
\newtheorem{thm}{Theorem}[section] % 1st argument is your name for it
\newtheorem{lem}[thm]{Lemma}     % 2nd argument is what is printed
\newtheorem{cor}[thm]{Corollary}
\newtheorem{prop}[thm]{Proposition}
\newtheorem{claim}[thm]{Claim}
\newtheorem{fact}[thm]{Fact}
\newcommand{\Zb}{\mathbb{Z}}
\newcommand{\Nb}{\mathbb{N}}
\newcommand{\Uca}{\mathcal{U}}
\newcommand{\Qp}{\mathbb{Q}_p}
\newcommand{\mc}[1]{\mathcal{#1}}
\newcommand{\mb}[1]{\mathbb{#1}}
\newcommand{\mf}[1]{\mathfrak{#1}}
\newcommand{\ms}[1]{\mathscr{#1}} %requires mathrsfs
\newcommand{\sleq}{\leqslant}
\newcommand{\sgeq}{\geqslant}
\newcommand{\Es}{\mathscr{E}}
\newcommand{\Aut}{\mathop{\rm Aut}}
\newcommand{\Inn}{\mathop{\rm Inn}}
\newcommand{\cgrp}[1]{\overline{\left\langle #1 \right\rangle}}
\newcommand{\grp}[1]{\left\langle #1 \right\rangle}
\newcommand{\ol}[1]{\overline{#1}}
\newcommand{\Rad}[1]{\mathop{\rm Rad}_{#1}\nolimits}
\newcommand{\Res}[1]{\mathop{\rm Res}_{#1}\nolimits}
\newcommand{\Qi}[1]{\mathbb{Q}_{#1}}
\begin{document}

\title[T.d.l.c. groups locally of finite rank]{Totally disconnected locally compact groups locally of finite rank}
\author[Phillip Wesolek]{PHILLIP WESOLEK\\
  Universit\'{e} catholique de Louvain,
  Louvain-la-Neuve, Belgium \addressbreak
  e-mail\textup{: \texttt{phillip.wesolek@uclouvain.be}}}

\receivedline{Received \textup{24} June \textup{2014}; revised \textup{27} January \textup{2015}}

\maketitle

\begin{abstract}
We study totally disconnected locally compact second countable (t.d.l.c.s.c.) groups that contain a compact open subgroup with finite rank. We show such groups that additionally admit a pro-$\pi$ compact open subgroup for some finite set of primes $\pi$ are virtually an extension of a finite direct product of topologically simple groups by an elementary group. This result, in particular, applies to l.c.s.c. $p$-adic Lie groups. We go on to obtain a decomposition result for all t.d.l.c.s.c. groups containing a compact open subgroup with finite rank. In the course of proving these theorems, we demonstrate independently interesting structure results for t.d.l.c.s.c. groups with a compact open pro-nilpotent subgroup and for topologically simple l.c.s.c. $p$-adic Lie groups. 
\end{abstract}

\section{Introduction}

\indent There are a number of theorems that point to a deep relationship between the structure of the compact open subgroups and the global structure of a totally disconnected locally compact (t.d.l.c.) group; cf. \cite{BEW11}, \cite{BM00}, \cite{CRW_1_13}, \cite{CRW_2_13}, \cite{Will07}. These global structural consequences of local properties are, following M. Burger and S. Mozes \cite{BM00}, often called \emph{local-to-global} structure theorems. In the work at hand, we contribute to the body of local-to-global structure theorems by proving results for t.d.l.c. groups that have a compact open subgroup of finite rank.
\begin{defn}
A profinite group has \textbf{rank} $0<r\sleq\infty$ if every closed subgroup contains a dense $r$-generated subgroup. When a profinite group has rank $r<\infty$, we say it has \textbf{finite rank}.
\end{defn}
\noindent These t.d.l.c. groups are of wide interest as locally compact $p$-adic Lie groups have a compact open subgroup with finite rank.\par

\begin{rmk} We study t.d.l.c. groups that are also second countable (s.c.). The second countability assumption is natural and mild. T.d.l.c.s.c. groups belong to the robust and natural class of Polish groups studied in descriptive set theory. T.d.l.c.s.c. groups are also the correct generalization of countable discrete groups studied by geometric group theorists.  More pragmatically, most natural examples of t.d.l.c. groups are second countable. As for the mildness of our assumption, t.d.l.c. groups may always be written as a directed union of compactly generated open subgroups, and these subgroups are second countable modulo a compact normal subgroup \cite[(8.7)]{HR79}. The study of t.d.l.c. groups therefore reduces to the study of second countable groups and profinite groups, so little generality is lost. 
\end{rmk}

\subsection{Statement of results}

\begin{defn} 
A t.d.l.c. group $G$ is said to be \textbf{locally} $x$ for $x$ a property of profinite groups, if $G$ contains a compact open subgroup with property $x$. In the case $G$ has a compact open subgroup with finite rank, we say $G$ is \textbf{locally of finite rank}.
\end{defn}

We consider t.d.l.c.s.c. groups that are locally of finite rank. Via a deep result of M. Lazard \cite{L65} and later work of A. Lubotzky and A. Mann \cite{LM87}, l.c.s.c. $p$-adic Lie groups, e.g. $SL_3(\Qp)$, are examples. Our first theorem is a structure result for locally of finite rank groups that are also locally pro-$\pi$ for some finite set of primes $\pi$. The statement requires the notion of an elementary group: The collection of elementary groups is the smallest class of t.d.l.c.s.c. groups that contains the second countable profinite groups and the countable discrete groups, is closed under group extension, and is closed under countable increasing union. This class is identified and investigated in \cite{W_1_14}.
\begin{thm}
Suppose $G$ is a t.d.l.c.s.c. group that is locally of finite rank and locally pro-$\pi$ for some finite set of primes $\pi$. Then either $G$ is elementary or there is a series of closed characteristic subgroups
\[
 \{1\}\sleq A_1\sleq A_2\sleq G
\]
such that 
\begin{enumerate}[(1)]
\item~$A_1$ is elementary and $G/A_2$ is finite; and

\item~$A_2/A_1\simeq N_1\times\dots\times N_k$ for $0< k<\infty$ where each $N_i$ is a non-elementary compactly generated topologically simple $p$-adic Lie group of adjoint simple type for some $p\in \pi$.
\end{enumerate}
\end{thm}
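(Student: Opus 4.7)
The plan is to build the series canonically by first peeling off the maximal elementary part of $G$ and then analyzing the resulting semisimple quotient via $p$-adic Lie theory. Set $A_1 := \Rad{\Es}(G)$, the \emph{elementary radical} of $G$, i.e.\ the join of all closed normal elementary subgroups. The closure of $\Es$ under group extension and countable increasing union established in \cite{W_1_14} ensures that $A_1$ is closed, elementary, and characteristic. If $A_1 = G$ then $G$ is elementary and we are done, so assume otherwise and put $H := G/A_1$. The quotient $H$ is again t.d.l.c.s.c., and the image of a compact open finite-rank pro-$\pi$ subgroup under a continuous quotient is a compact open subgroup with the same two properties, so $H$ is again locally of finite rank and locally pro-$\pi$. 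A routine pull-back argument using extension-closure of $\Es$ gives $\Rad{\Es}(H) = \{1\}$; in particular $H$ has no nontrivial closed normal discrete subgroup and no nontrivial closed normal compact subgroup.

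Next I would extract a closed characteristic subgroup $M = N_1\times\dots\times N_k \trianglelefteq H$, where each $N_i$ is a non-elementary topologically simple t.d.l.c.s.c.\ group, by invoking the Caprace–Reid–Willis local-to-global machinery \cite{CRW_1_13,CRW_2_13}. The vanishing of the elementary radical of $H$ eliminates the degenerate cases of the CRW dichotomy, while the finite-rank hypothesis is what promotes the minimal normal structure to an honest direct product of topologically simple groups and forces $k<\infty$: each topologically simple factor contributes a nontrivial amount to the rank of any fixed compact open subgroup, and that rank is bounded. Each $N_i$ is closed in $H$, hence itself locally pro-$\pi$ of finite rank; in a finite-rank pro-$\pi$ group the pro-$p$ Sylow subgroups are closed, normal in a finite-index open subgroup, and direct factors there, so topological simplicity of $N_i$ selects a single $p_i \in \pi$ and makes $N_i$ locally pro-$p_i$. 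Lazard's theorem \cite{L65} then upgrades $N_i$ to a $p_i$-adic Lie group, and the independently-proved structure theorem for topologically simple l.c.s.c.\ $p$-adic Lie groups mentioned in the abstract pins it down as compactly generated and of adjoint simple type.

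Finally let $A_2$ be the preimage of $M$ in $G$; since $M$ is characteristic in $H$, $A_2$ is characteristic in $G$. The conjugation action of $G/A_2$ on $M$ is faithful: because $M$ is a direct product of non-abelian topologically simple groups, its centralizer in $H$ is trivial, so $G/A_2 \hookrightarrow \mathrm{Out}(M)$. The outer automorphism group of a finite direct product of simple adjoint $p$-adic Lie groups is finite — it permutes the finitely many factors and acts on each by the finite group of Dynkin-diagram automorphisms — so $G/A_2$ is finite. The principal obstacle is the middle step: teasing an honest \emph{finite} direct product of topologically simple closed normal subgroups out of the CRW machinery, which in general only delivers an essentially-simple or quasi-product type structure. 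This is where I expect the finite-rank hypothesis to do the heavy lifting. A secondary technicality is the single-prime reduction for each $N_i$, which relies on delicate Sylow structure in finite-rank pro-$\pi$ groups and is plausibly one of the auxiliary results on pro-nilpotent compact open subgroups that the abstract promises.
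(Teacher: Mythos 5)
Your overall architecture matches the paper's: quotient by the elementary radical, exhibit the elementary-free part as a finite direct product of non-elementary topologically simple $p$-adic Lie groups, and control the remaining quotient by the finiteness of the outer automorphism group (which is indeed Theorem~\ref{thm:index}, proved via Borel--Tits; your description of it as ``Dynkin-diagram automorphisms'' omits the field-automorphism and $S(k)/S(k)^+$ contributions, but the conclusion is what matters). However, the middle step that you explicitly flag as the ``principal obstacle'' is a genuine gap, and it is not filled by citing the Caprace--Reid--Willis machinery. That machinery is designed for \emph{compactly generated} groups, whereas $G$ here need not be compactly generated; and even in the compactly generated case it yields only quasi-products and localized structure, not closed minimal normal subgroups. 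The paper's actual engine is Lemma~\ref{lem:minlem}: a bespoke argument that every filtering family of non-trivial closed normal subgroups of an elementary-free, locally finite rank, locally pro-$\pi$ group has non-trivial intersection. Its proof exhausts $G$ by compactly generated $P_n$, invokes the Caprace--Monod result on the normal cores $K_n$, uses that $\Aut(K_n)$ is profinite in the congruence subgroup topology (so $P_n/C_{P_n}(K_n)$ is elementary), and then runs a Frattini-subgroup argument (Proposition~\ref{prop:s_solvable}) to force $C_{P_n}(K_n)$ open, contradicting triviality of the quasi-centre. Zorn's lemma and a second filtering-family argument then give existence and finiteness of the minimal normal subgroups; your heuristic that rank bounds $k$ is not how the paper argues and would anyway only bound $k$ after the direct product is already in hand.

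A secondary but real issue: you work throughout in $H=G/A_1$, which has trivial elementary radical but need not have trivial elementary \emph{residual}. Lemma~\ref{lem:minlem} essentially uses both (the density of $\bigcup_n C_{P_n}(K_n)$ is deduced from $\Res{\Es}=G$), which is why the paper passes to the full elementary-free section $A_2/A_1$ of the ascending elementary series (Theorem~\ref{thm:acs_E_series}) rather than to $G/A_1$. Relatedly, your claim that $C_H(M)$ is trivial (needed for $G/A_2\hookrightarrow\mathrm{Out}(M)$) requires knowing that every non-trivial closed normal subgroup of $H$ contains a minimal one and that $M$ contains them all --- again consequences of the unproved lemma. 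The paper instead shows $\bigcap_i C_L(N_i)$ embeds into the elementary group $G/A_2$ and hence vanishes by maximality of $A_1$. Simplicity of each $N_i$, and the reduction to a single prime, are also not ``Sylow delicacies'': they follow from Corollary~\ref{cor:melnikov} (virtual pro-nilpotence), Corollary~\ref{cor:loc_nil_simple} for simple locally pro-nilpotent groups, and Lemma~\ref{lem:simplelem}'s permutation argument, all of which need to be supplied.
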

A $p$-adic Lie group is of adjoint simple type if it is isomorphic to $S(\Qp)^+$ for some adjoint $\Qp$-simple isotropic $\Qp$-algebraic group $S$.

\indent As a corollary, we obtain a decomposition result for l.c.s.c. $p$-adic Lie groups. We call attention to the similarity with the solvable-by-semisimple decomposition for connected Lie groups.

\begin{cor}
Suppose $G$ is l.c.s.c. $p$-adic Lie group. Then either $G$ is elementary or there is a series of closed characteristic subgroups
\[
\{1\}\sleq A_1\sleq A_2\sleq G
\]
such that 
\begin{enumerate}[(1)]
\item~$A_1$ is elementary and $G/A_2$ is finite; and
 
\item~$A_2/A_1\simeq N_1\times\dots\times N_k$ for $0< k<\infty$ where each $N_i$ is a non-elementary compactly generated topologically simple $p$-adic Lie group of adjoint simple type.
\end{enumerate}
\end{cor}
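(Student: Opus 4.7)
The plan is to deduce the corollary as a direct specialisation of the preceding theorem, applied with $\pi=\{p\}$. Once one verifies that every l.c.s.c.\ $p$-adic Lie group is simultaneously \emph{locally of finite rank} and \emph{locally pro-$\{p\}$}, the series of characteristic subgroups and the product decomposition fall straight out of the theorem; the only additional bookkeeping will be to confirm that the adjoint simple factors produced are $p$-adic Lie groups for this particular prime $p$, rather than for some unspecified prime in $\pi$.

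To verify the local hypotheses, I would appeal to Lazard's classical structure theorem for compact $p$-adic Lie groups. Every l.c.s.c.\ $p$-adic Lie group $G$ admits a compact open subgroup $U$ that is itself a compact $p$-adic analytic group; shrinking $U$ if necessary, one may take $U$ to be a \emph{standard} group, that is, a uniform pro-$p$ group arising as the exponential of a $\mathbb{Z}_p$-Lie algebra that is finitely generated and free as a $\mathbb{Z}_p$-module. By the Lubotzky--Mann characterisation of $p$-adic analytic pro-$p$ groups, such a $U$ has finite rank, so $G$ is locally of finite rank. Since $U$ is also pro-$p$, the group $G$ is locally pro-$\{p\}$, with $\{p\}$ evidently a finite set of primes.

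With both hypotheses in place, the preceding theorem furnishes closed characteristic subgroups $\{1\}\sleq A_1\sleq A_2\sleq G$ such that $A_1$ is elementary, $G/A_2$ is finite, and $A_2/A_1\simeq N_1\times\dots\times N_k$, where each $N_i$ is a non-elementary compactly generated topologically simple $q$-adic Lie group of adjoint simple type for some $q\in\pi$. Since $\pi=\{p\}$, every such $q$ must equal $p$, and this yields the statement of the corollary verbatim.

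The main obstacle lies not in this derivation but in the theorem it specialises; for the corollary itself, the only genuinely substantive point is the simultaneous availability of a compact open subgroup that is at once pro-$p$ and of finite rank, and this is precisely the content of Lazard's theorem once a sufficiently small standard subgroup is selected.
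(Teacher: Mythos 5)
Your proposal is correct and is essentially the paper's own proof: the paper likewise invokes the Lazard/Lubotzky--Mann characterisation (stated as Theorem~\ref{thm:p-adic_char}) to obtain a compact open subgroup that is simultaneously pro-$p$ and of finite rank, and then applies Theorem~\ref{thm:pi_lfr} with $\pi=\{p\}$. Your excursion through uniform/standard subgroups merely unpacks that characterisation, which the paper cites directly.
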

For a non-elementary l.c.s.c. $p$-adic Lie group $G$, it follows $G/A_1$ has a semisimple Lie algebra. The aforementioned similarity with the connected setting is thus quite deep.\par

\indent We then relax the locally pro-$\pi$ assumption. We first note an immediate corollary of our results for locally pro-$\pi$ groups.
\begin{cor}
Suppose $G$ is a t.d.l.c.s.c. group that is locally of finite rank. Then either $G$ is elementary or there is an $\subseteq$-increasing exhaustion $(P_i)_{i\in \omega}$ of $G$ by compactly generated open subgroups such that for each $i$ there is a series of closed characteristic subgroups
\[
\{1\}\sleq A_1(i)\sleq A_2(i)\sleq P_i
\] 
such that
\begin{enumerate}[(1)]
\item~$A_1(i)$ is elementary and $P_i/A_2(i)$ is finite; and

\item~$A_2(i)/A_1(i)\simeq N_1(i)\times\dots\times N_{k_i}(i)$ for $0< k_i<\infty$ where each $N_j(i)$ is a non-elementary compactly generated topologically simple $p$-adic Lie group of adjoint simple type for some prime $p$.
\end{enumerate}
\end{cor}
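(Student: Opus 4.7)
The plan is to reduce the corollary to the previous theorem by decomposing $G$ into compactly generated open subgroups each of which satisfies the locally pro-$\pi$ hypothesis for some finite set of primes depending on the subgroup.

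If $G$ is elementary, the first alternative of the conclusion holds and there is nothing to prove; assume $G$ is non-elementary. Since $G$ is t.d.l.c.s.c., we may write $G=\bigcup_{i\in\omega}P_i$ as a $\subseteq$-increasing exhaustion by compactly generated open subgroups. Because the elementary class is closed under countable increasing unions (\cite{W_1_14}), there is at least one compactly generated open non-elementary subgroup $P_0\sleq G$; by enlarging each $P_i$ to contain $P_0$ together with the first $i$ elements of a fixed enumeration of a dense subset of $G$, we can arrange $P_0\sleq P_i\sleq P_{i+1}$ and $\bigcup_iP_i=G$. The elementary class being closed under taking closed subgroups, each $P_i$ is non-elementary. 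Local finite rank is inherited by open subgroups, so each $P_i$ is also t.d.l.c.s.c.~and locally of finite rank.

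Granting that each $P_i$ is in addition locally pro-$\pi_i$ for some finite $\pi_i$, apply the previous theorem to each $P_i$ to obtain a series $\{1\}\sleq A_1(i)\sleq A_2(i)\sleq P_i$ of closed characteristic subgroups with $A_1(i)$ elementary, $P_i/A_2(i)$ finite, and $A_2(i)/A_1(i)\simeq N_1(i)\times\cdots\times N_{k_i}(i)$ where each $N_j(i)$ is a non-elementary compactly generated topologically simple $p$-adic Lie group of adjoint simple type for some $p\in\pi_i$. The strict inequality $k_i>0$ is forced: were $A_2(i)=A_1(i)$ then $P_i$ would be finite-by-elementary, hence elementary, contradicting the non-elementariness of $P_i$.

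The principal obstacle is the verification that each non-elementary compactly generated $P_i$ of finite rank is locally pro-$\pi_i$ for some finite $\pi_i$. Let $V_i\sleq P_i$ be a compact open subgroup of finite rank; by Lubotzky--Mann it contains an open characteristic pronilpotent subgroup $V_{i,0}=\prod_p V_{i,0}^{(p)}$ with Sylow decomposition. One must show $V_{i,0}^{(p)}=\{1\}$ for all but finitely many $p$. The example $G=\widehat{\Zb}$ shows this fails absent the non-elementariness hypothesis, so the argument must genuinely exploit non-elementariness of $P_i$. My approach is to consider, for each finite set $\pi$ of primes, the tail $W_\pi=\prod_{p\notin\pi}V_{i,0}^{(p)}$, which is closed characteristic in $V_{i,0}$, and to analyse the normal closure of $W_\pi$ in $P_i$: this normal closure should be elementary, being directed-unionable from profinite pieces, and if no finite $\pi$ reduces it to $\{1\}$ then $P_i$ is itself elementary by iterated extension, a contradiction. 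I expect this reduction to be packaged as a standalone lemma developed earlier in the paper, likely invoking the announced structure theorem for t.d.l.c.s.c.~groups with a pro-nilpotent compact open subgroup.
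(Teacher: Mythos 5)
Your overall strategy --- exhaust $G$ by compactly generated open subgroups and feed each one into Theorem~\ref{thm:pi_lfr} --- is the paper's strategy, but the step you flag as ``the principal obstacle'' is not merely hard: the claim you are trying to prove is false. A non-elementary compactly generated t.d.l.c.s.c. group that is locally of finite rank need \emph{not} be locally pro-$\pi$ for any finite $\pi$: take $P=PSL_2(\Qp)\times\widehat{\Zb}$. This is compactly generated, non-elementary, and locally of finite rank (a compact open subgroup is $U\times\widehat{\Zb}$ with $U$ of finite rank and $\widehat{\Zb}$ of rank one, and rank is subadditive under direct products), yet every compact open subgroup contains $\Zb_q$ for all but finitely many primes $q$. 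In this example your tails $W_\pi$ and their normal closures are the nontrivial compact normal subgroups $\prod_{q\notin\pi}\Zb_q$ for every finite $\pi$, and no contradiction with non-elementariness arises. Indeed the step ``if no finite $\pi$ reduces the normal closure to $\{1\}$ then $P_i$ is elementary by iterated extension'' is unjustified in general: a descending chain of nontrivial elementary normal subgroups with trivial intersection does not make $P_i$ elementary, since the corresponding quotients need not be elementary. (A smaller inaccuracy: Theorem~\ref{thm:finrank} gives a pro-nilpotent normal $C$ with $U/C$ only \emph{virtually solvable}, not finite, so a finite rank compact open subgroup need not contain an \emph{open} pro-nilpotent subgroup; that conclusion needs Corollary~\ref{cor:melnikov} and hence already presupposes pro-$\pi$.)

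The fix, and the paper's actual route, is not to make $P_i$ itself locally pro-$\pi$ but to make it so \emph{modulo an elementary normal subgroup}. By Theorem~\ref{thm:Caprace_loc_pro_pi} (packaged as Corollary~\ref{cor:loc_nil_main2}) each compactly generated open $P_i$ has a compact (or locally elliptic) normal subgroup $K_i$ such that $P_i/K_i$ is locally pro-$\pi_i$ for some finite $\pi_i$; this quotient is still locally of finite rank, so Theorem~\ref{thm:pi_lfr} applies to it. Since $K_i$ is elementary, it is absorbed into $\Rad{\Es}(P_i)$, and the ascending elementary series of $P_i$ is the preimage of that of $P_i/K_i$, so the conclusion transfers to $P_i$. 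In the example above one has $K=\widehat{\Zb}$, $A_1=\widehat{\Zb}$, and $A_2/A_1\simeq PSL_2(\Qp)$, exactly as the corollary predicts. The rest of your setup --- arranging each $P_i$ to contain a fixed non-elementary compactly generated open subgroup so that $k_i>0$ --- is correct and worth keeping.
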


\indent We go on to obtain a more detailed structure result.
\begin{thm}
Suppose $G$ is a t.d.l.c.s.c. group that is locally of finite rank. Then either $G$ is elementary or there is a series of closed characteristic subgroups
\[
\{1\}\sleq A_1\sleq A_2\sleq G
\]
such that 
\begin{enumerate}[(1)]
\item~$A_1$ and $G/A_2$ are elementary; and

\item~there is a possibly infinite set of primes $\pi$ so that $A_2/A_1$ is a quasi local direct product of $(N_p,W_p)_{p\in \pi}$ where $N_p$ is a finite direct product of non-elementary compactly generated topologically simple $p$-adic Lie groups of adjoint simple type and $W_p$ is a compact open subgroup of $N_p$.
\end{enumerate}
\end{thm}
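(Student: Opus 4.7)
Assume $G$ is non-elementary; otherwise take $A_1=A_2=G$ and there is nothing to prove. The strategy is to extract the elementary radical of $G$ as $A_1$, then inside the quotient $\bar G:=G/A_1$ collect the $p$-adic Lie simple pieces one prime at a time into $A_2/A_1$, and finally argue that what remains is elementary.

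Since the class of elementary groups is closed under extensions and under countable increasing unions, $G$ has a largest closed normal elementary subgroup; set $A_1:=\Rad{\mathcal{E}}(G)$. This is characteristic by maximality, and the quotient $\bar G:=G/A_1$ is non-elementary, locally of finite rank, and has trivial elementary radical. Write $\bar G=\bigcup_{i\in\omega}P_i$ with each $P_i$ compactly generated open, and apply the preceding corollary to each $P_i$ to obtain a characteristic series $\{1\}\sleq A_1(i)\sleq A_2(i)\sleq P_i$ whose middle layer factors as $A_2(i)/A_1(i)\simeq N_1(i)\times\cdots\times N_{k_i}(i)$ with each $N_j(i)$ a non-elementary compactly generated topologically simple $p$-adic Lie group of adjoint simple type, and with $P_i/A_2(i)$ finite.

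For each prime $p$, let $\mathcal{S}_p$ denote the collection of closed subgroups of $\bar G$ that are non-elementary compactly generated topologically simple $p$-adic Lie groups of adjoint simple type. The simple pieces $N_j(i)$ of the local decompositions should promote to members of $\mathcal{S}_p$ in $\bar G$, since the elementary kernel $A_1(i)$ must lift trivially given the vanishing elementary radical of $\bar G$. Define $N_p$ to be the closure in $\bar G$ of the subgroup generated by all members of $\mathcal{S}_p$. Using that a topologically simple $p$-adic Lie group has essentially trivial centraliser in $\bar G$ modulo an elementary piece, one shows that members of $\mathcal{S}_p$ and $\mathcal{S}_q$ commute for $p\neq q$; moreover, for each fixed $p$, the group $N_p$ is itself a finite direct product of members of $\mathcal{S}_p$ once the count of factors stabilises across $i$. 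Set $\pi:=\{p:N_p\neq\{1\}\}$, fix a compact open subgroup $U\sleq\bar G$, and put $W_p:=N_p\cap U$. Let $A_2$ be the preimage in $G$ of the closure of $\langle N_p:p\in\pi\rangle$; the family $(N_p,W_p)_{p\in\pi}$ then exhibits $A_2/A_1$ as a quasi local direct product.

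Finally, $G/A_2$ should be elementary: the quotient $\bar G/(A_2/A_1)$ is a countable increasing union of finite groups, since each $P_i/A_2(i)$ is finite, and countable increasing unions of finite groups are elementary. The main obstacle lies in justifying the prime-by-prime assembly: one must promote the local simple factors $N_j(i)\sleq P_i$ to canonical closed subgroups of $\bar G$ in a way compatible across $i$, establish commutation of simple pieces at different primes, and show finiteness of the $p$-adic simple content for each fixed $p$. All three hinge on rigidity of non-elementary topologically simple $p$-adic Lie groups of adjoint simple type (semisimple $\mathbb{Q}_p$-Lie algebra and trivial centre) combined with the vanishing of the elementary radical of $\bar G$, which together should pin down each $N_p$ as a canonical characteristic subgroup independent of the exhaustion.
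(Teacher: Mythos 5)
Your high-level picture (peel off the elementary radical, assemble the middle layer prime by prime as a quasi local direct product, argue the top is elementary) matches the paper, but the step you yourself flag as ``the main obstacle'' is in fact the entire technical content of the theorem, and the route you sketch for it does not work as stated. First, you work only in $\bar G=G/\Rad{\Es}(G)$, whereas the paper passes to the elementary-free section $A_2/A_1$ of the ascending elementary series; the absence of elementary \emph{quotients} is used repeatedly (to show the $N_p$ generate a dense subgroup, to kill $N_p\cap N_q$, to upgrade ``finite index'' to ``everything''), and trivial elementary radical alone does not supply this. Second, the claim that ``the elementary kernel $A_1(i)$ must lift trivially given the vanishing elementary radical of $\bar G$'' is false: $A_1(i)=\Rad{\Es}(P_i)$ is normal only in the open subgroup $P_i$, not in $\bar G$, so it is typically non-trivial (for small $i$ the $P_i$ are even compact, so $A_1(i)=P_i$). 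Consequently the $N_j(i)$ are subquotients of $P_i$, not subgroups of $\bar G$, and there is no canonical way to ``promote'' them; nor is there any argument that the factor count stabilises across $i$, that the promoted pieces at different primes commute, or that the closure of the group they generate is again a finite direct product. Finally, $G/A_2$ need not be an increasing union of finite groups: $P_i/A_2(i)$ being finite says nothing about the image of $P_i$ in $\bar G$ modulo your global $A_2$, since $A_2(i)$ need not map into it.

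The paper's solution to all of this is Lemma 5.6 (\emph{lem:loc\_finrank}): for a fixed finite-rank $U\in\Uca(G)$ one builds, for each prime $p$, a closed \emph{characteristic} subgroup $N_p$ as the closure of a synthetic subgroup (the $\mc{H}_p$-core, defined by normalising the $q$-Sylow subgroups of Fitting subgroups for all $q\neq p$), and proves that $F(U)_p$ is a \emph{compact open} subgroup of $N_p$ and that $N_p\cap N_q=\{1\}$. Openness of $F(U)_p$ in $N_p$ is what makes $N_p$ locally pro-$p$ and locally of finite rank, so that Corollary 5.10 applies and yields the finite direct product of adjoint simple type factors; it also identifies $\prod_p W_p$ with the pro-nilpotent group $F(U)$, which is exactly what makes the multiplication map $\bigoplus_p(N_p,W_p)\to G$ well defined. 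Density of the image comes from $U/F(U)$ being virtually solvable (Theorem 2.13) plus elementary-freeness, and injectivity from a Sylow argument together with the absence of locally normal abelian subgroups. None of these ingredients is recoverable from rigidity of the simple factors alone, so the proposal has a genuine gap rather than an alternative proof.
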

A group $G$ is a quasi local direct product of $(N_p,W_p)_{p\in \pi}$ if each $N_p$ is a closed normal subgroup of $G$ and the multiplication map $m:\bigoplus_{p\in\pi }(N_p,W_p)\rightarrow G$ is a well-defined injective homomorphism with dense image. The group $\bigoplus_{p\in\pi }(N_p,W_p)$ is the local direct product of $(N_p)_{p\in \pi}$ over $(W_p)_{p\in \pi}$. \par

\indent The proofs of the above theorems require two independently interesting lines of inquiry. We first study locally pro-nilpotent groups. Here we obtain a compelling decomposition result.

\begin{thm}
Suppose $G$ is a t.d.l.c.s.c. group that is locally pro-nilpotent. Then either $G$ is elementary or there is a series of closed characteristic subgroups
\[
\{1\}\sleq A_1\sleq A_2\sleq G
\]  
so that
\begin{enumerate}[(1)]
\item~$A_1$ and $G/A_2$ are elementary; and

\item~there is a possibly infinite set of primes $\pi$ so that $A_2/A_1\simeq \bigoplus_{p\in\pi }(L_p,U_p)$ where $L_p$ is a locally pro-$p$ t.d.l.c.s.c. group and $U_p$ is a compact open subgroup of $L_p$.
\end{enumerate}
\end{thm}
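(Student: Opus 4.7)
The plan is to use the elementary radical as $A_1$ and then extract one locally pro-$p$ closed characteristic subgroup $L_p$ of $G/A_1$ per prime; their closed span will be $A_2$, and openness of $A_2$ modulo $A_1$ will force $G/A_2$ to be discrete and therefore elementary.

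\textbf{Reduction and construction of the blocks.} I would set $A_1:=\Rad{\mc{E}}(G)$, the elementary radical of $G$, which is closed characteristic and elementary by \cite{W_1_14}. The t.d.l.c.s.c.\ quotient $\tilde G:=G/A_1$ remains locally pro-nilpotent and has trivial elementary radical (since closure under extensions would otherwise produce a strictly larger elementary normal subgroup of $G$). Fix a compact open pro-nilpotent subgroup $U\sleq \tilde G$ with its canonical Sylow decomposition $U=\prod_{p\in\pi_0}U_p$ over a countable set of primes $\pi_0$. For each $p\in\pi_0$ I would take $L_p$ to be the closure in $\tilde G$ of the subgroup generated by all compact pro-$p$ subgroups of $\tilde G$: this is closed and characteristic and contains $U_p$. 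The decisive claim is that $L_p\cap U=U_p$, so that $L_p$ is locally pro-$p$ with compact open pro-$p$ subgroup $U_p$. One containment is immediate; for the reverse I would reduce to compactly generated open subgroups $H\sleq\tilde G$ containing $U$, apply the locally pro-$\pi$ structure analysis developed earlier in the paper to finite sub-collections of $\pi_0$, and invoke triviality of $\Rad{\mc{E}}(\tilde G)$ to rule out any anomalous $q$-content for $q\neq p$ produced along the way.

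\textbf{Assembling the local direct product.} For distinct primes $p\neq q$, the closed normal subgroup $L_p\cap L_q$ meets $U$ only in $U_p\cap U_q=\{1\}$, so it is discrete; being also closed normal in $\tilde G$, it is a countable discrete normal subgroup, hence elementary, hence trivial by the triviality of the elementary radical of $\tilde G$. Normality of both $L_p$ and $L_q$ together with $L_p\cap L_q=\{1\}$ forces $[L_p,L_q]=\{1\}$. Setting $\tilde A_2:=\overline{\grp{L_p:p\in\pi_0}}$, the multiplication map
\[
 m\colon \bigoplus_{p\in\pi_0}(L_p,U_p)\longrightarrow \tilde A_2
\]
is a continuous injective homomorphism with dense image that restricts to the identity on the compact open subgroup $\prod_pU_p=U$, which forces it to be a topological isomorphism. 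In particular $U\sleq \tilde A_2$, so $\tilde A_2$ is open in $\tilde G$ and $\tilde G/\tilde A_2$ is a discrete second countable group, hence countable, hence elementary. Pulling back $\tilde A_2$ to $G$ yields the required closed characteristic $A_2$ with $A_1\sleq A_2\sleq G$.

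\textbf{Main obstacle.} The crux is the identification $L_p\cap U=U_p$: that the closed subgroup of $\tilde G$ generated by all its compact pro-$p$ subgroups does not accumulate at elements of $U$ with non-trivial $q$-content for $q\neq p$. In a non-abelian setting, products of $p$-elements can a priori produce composite-order elements, so this lower-bound behaviour is non-obvious and must be controlled globally. I expect the proof of this identity to occupy the bulk of the work and to hinge on combining the earlier locally pro-$\pi$ structure theorem applied to finite prime slices of $\pi_0$ with the absence of an elementary radical in $\tilde G$; the remaining ingredients --- pairwise commutation of the blocks, the discrete-implies-trivial argument, and the openness of $\tilde A_2$ --- are comparatively routine once the identification is in hand.
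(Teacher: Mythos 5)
Your overall architecture (take $A_1=\Rad{\Es}(G)$, build one characteristic locally pro-$p$ block per prime, assemble a local direct product, let $A_2$ be the pullback of its closed span) matches the paper's, but your construction of the blocks is wrong, and it is exactly at your self-identified ``decisive claim'' that it fails. You define $L_p$ as the closure of the subgroup generated by \emph{all} compact pro-$p$ subgroups of $\tilde G$. Finite $p$-groups are compact pro-$p$ groups, and locally pro-$q$ pieces of $\tilde G$ generically contain nontrivial finite $p$-subgroups. Concretely, take $\tilde G=PSL_3(\Qi{q})\times PSL_3(\Qi{p})$ (elementary-free and locally pro-nilpotent, with $U=V_q\times V_p$): the factor $PSL_3(\Qi{q})$ contains a nontrivial element of order $2$ (e.g.\ the image of $\mathrm{diag}(1,-1,-1)$), so your $L_2$ meets $PSL_3(\Qi{q})$ nontrivially; being closed and normal, $L_2\cap PSL_3(\Qi{q})$ is then all of $PSL_3(\Qi{q})$ by topological simplicity. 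Hence $L_2\cap U\supsetneq U_2$, the claim $L_p\cap U=U_p$ is false for your $L_p$, and everything downstream ($L_p\cap L_q=\{1\}$, commutation, injectivity of $m$) collapses. The paper avoids this by a much more delicate construction (Proposition~\ref{prop:loc_nil}): $L_p$ is the closure of a synthetic subgroup consisting of those $g$ such that $\cgrp{g,C}$ normalizes the $q$-Sylow subgroups of arbitrarily small pro-nilpotent open subgroups for every $q\neq p$; the key outputs are that $G/L_p$ is locally pro-$p'$ and that compactly generated pieces of $L_p$ are locally pro-$p$ modulo their locally elliptic radicals, from which $L_p\cap L_q=\{1\}$ is deduced \emph{first} (in the elementary-free case) and only then $U\cap L_p=U_p$.

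Two further points. Your proposed repair --- ``apply the locally pro-$\pi$ structure analysis developed earlier in the paper to finite sub-collections of $\pi_0$'' --- cannot mean the finite-rank/locally pro-$\pi$ theorems, since those appear later and already rely on the present theorem (via Corollary~\ref{cor:loc_nil_simple}); only Theorem~\ref{thm:Caprace_loc_pro_pi} and Corollary~\ref{cor:loc_nil_main2} are available, and they do not rescue your definition of $L_p$. Separately, even granting pairwise trivial intersections and commutation of the blocks, injectivity of the multiplication map $m$ on the infinite local direct product is not automatic and you assert it without argument; the paper needs a genuine step here, showing that a nontrivial kernel element would force a nontrivial central (hence elementary) normal subgroup of some $L_{p_k}$, contradicting elementary-freeness.
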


\indent Our principal local-to-global results additionally require a study of topologically simple l.c.s.c. $p$-adic Lie groups. We prove three results for such groups. 
\begin{prop}
If $G$ is a non-elementary topologically simple l.c.s.c. $p$-adic Lie group, then $G$ is isomorphic to a closed subgroup of $GL_n(\Qp)$ for some $n$ and $L(G)$, the Lie algebra of $G$, is simple.
\end{prop}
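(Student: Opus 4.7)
The plan is to study $G$ via its adjoint representation $\mathrm{Ad}\colon G\to GL(L(G))\cong GL_n(\Qp)$, where $n:=\dim_{\Qp}L(G)$. First note $n>0$: otherwise $G$ is discrete and, being topologically simple and second countable, is elementary. The kernel $\ker(\mathrm{Ad})$ is closed and normal, so by topological simplicity it equals $\{1\}$ or $G$. In the latter case the $p$-adic $\exp$/$\log$ correspondence forces $L(G)$ to be abelian, so $G$ is locally pro-abelian; combining this with topological simplicity and the theorem on locally pro-nilpotent groups stated above shows $G$ is elementary, contradicting the hypothesis. Hence $\ker(\mathrm{Ad})=\{1\}$, and in particular $Z(L(G))=0$.

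Next, I would upgrade $\mathrm{Ad}$ to a closed embedding into $GL_n(\Qp)$. Its differential $\mathrm{ad}\colon L(G)\to\mathrm{End}(L(G))$ is injective because $Z(L(G))=0$, so $\mathrm{Ad}$ is an injective immersion of $p$-adic Lie groups and is therefore locally a diffeomorphism onto a $p$-adic analytic submanifold of $GL_n(\Qp)$. Consequently $\mathrm{Ad}(G)$ is locally closed in $GL_n(\Qp)$, and a locally closed subgroup of a topological group is automatically closed.

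For the simplicity of $L(G)$, I would work inside the Zariski closure $H\subseteq GL_n(\Qp)$ of $\mathrm{Ad}(G)$. Any $\mathrm{Ad}(G)$-invariant ideal $\mathfrak{a}\lhd L(G)$ is the Lie algebra of a $\Qp$-algebraic subgroup $H_{\mathfrak{a}}\leq H$ normalized by the Zariski-dense $G\leq H$; then $G\cap H_{\mathfrak{a}}$ is closed and normal in $G$, and is non-trivial whenever $\mathfrak{a}\neq 0$ because $\exp$ carries a small ball in $\mathfrak{a}$ into $H_{\mathfrak{a}}\cap U$ for any uniform pro-$p$ open $U\leq G$. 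Topological simplicity then forces $\mathfrak{a}=0$ or $\mathfrak{a}=L(G)$. Applying this to the solvable radical rules out solvable $L(G)$ (a solvable $p$-adic Lie group being an iterated extension of abelian pieces and hence elementary); once $L(G)$ is semisimple with decomposition $\mathfrak{s}_1\oplus\cdots\oplus\mathfrak{s}_k$ into simple ideals, applying the same argument to each $\mathrm{Ad}(G)$-orbit sum of simple summands forces $k=1$, so $L(G)$ is simple.

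The main obstacle I anticipate is the bookkeeping in the last step: one must verify carefully that $\mathrm{Ad}(G)$-invariant ideals of $L(G)$ correspond to normal algebraic subgroups of $H$, and that the local $\exp$/$\log$ machinery on a uniform pro-$p$ open subgroup of $G$ genuinely extracts a non-trivial closed normal subgroup of $G$ from each non-zero invariant ideal. Zariski density of $G$ in $H$ is the key structural input that makes this transfer of information possible.
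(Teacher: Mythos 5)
There is a genuine gap at the step where you upgrade $\mathrm{Ad}$ to a closed embedding. It is not true that a continuous injective homomorphism of $p$-adic Lie groups with injective differential has locally closed image: an injective immersion only realizes its image as an \emph{immersed} submanifold, and the subspace topology on the image may be strictly coarser than the source topology. For a concrete failure, the inclusion $\Zb_p\times \Zb\hookrightarrow \Zb_p\times\Zb_p$ (with $\Zb$ discrete) is an injective immersion whose image is dense and neither closed nor locally closed. What actually decides the matter is a dimension count: $Ad(U)$ ($U\in\Uca(G)$ compact open) is a compact subgroup of $\Aut(L(G))$ with Lie algebra $\mathrm{ad}(L(G))$, and $Ad(G)$ is closed precisely when $Ad(U)$ is \emph{open} in $\ol{Ad(G)}$, which requires $\dim \ol{Ad(G)}=\dim G$; if the dimensions differ, a Baire category argument shows $Ad(G)$ is meager in its closure, hence not closed. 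This equality of dimensions is exactly what semisimplicity of $L(G)$ buys you, since then $\mathrm{Der}(L(G))=\mathrm{ad}(L(G))$ and $Ad(U)$ is open in $\Aut(L(G))$, which is closed in $GL_n(\Qp)$. Your plan, however, only establishes semisimplicity \emph{after} the closed embedding (via Zariski closures inside $GL_n(\Qp)$), so as written the argument is circular as well as gapped.

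The fix is to prove semisimplicity of $L(G)$ before touching closedness, and this can be done intrinsically: if the solvable radical $\mf{r}$ of $L(G)$ were non-zero, Levi--Malcev integrates it to a non-trivial closed normal subgroup $N$ of some open $O\sleq_o G$ with a solvable open subgroup, and the last non-trivial term of the derived series of $N\cap U$ is then a non-trivial locally normal abelian subgroup of $G$; since a non-elementary topologically simple group has trivial elementary radical, this contradicts Theorem~\ref{thm:[A]semisimple}. (This also replaces your Zariski-closure treatment of the solvable radical, which is delicate because an arbitrary ideal of $L(G)$ need not be algebraic.) Two smaller points: in ruling out $\ker(Ad)=G$, the locally pro-nilpotent theorem only yields ``elementary or locally pro-$p$''; the right tool is Theorem~\ref{thm:locsolv_e}, since abelian $L(G)$ gives an open abelian, hence solvable, subgroup. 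Your final step deducing simplicity from semisimplicity by intersecting $Ad(G)$ with the factors of $\Aut(\mf{s}_1)\times\dots\times\Aut(\mf{s}_k)$ is sound once openness of $Ad(G)$ in $\Aut(L(G))$ is in hand.
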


Via the proposition, a close relationship with $\Qp$-algebraic groups emerges.
\begin{thm}
Suppose $G$ is a non-elementary topologically simple l.c.s.c. $p$-adic Lie group. Then $G\simeq H(\Qp)^+$ with $H$ an adjoint $\Qp$-simple isotropic $\Qp$-algebraic group. 
\end{thm}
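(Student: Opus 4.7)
The plan is to apply the preceding proposition to realize $G$ as a closed matrix group, then use algebraic group theory to identify $G$ with the $+$-subgroup of a suitable adjoint $\Qp$-simple isotropic $\Qp$-algebraic group.

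First, by the previous proposition, I may embed $G$ as a closed subgroup of $GL_n(\Qp)$ with $L(G)$ simple. Let $H_0\sleq GL_n$ be the Zariski closure of $G$ over $\Qp$. Since semisimple Lie subalgebras of $\mathfrak{gl}_n(\Qp)$ are algebraic, we have $L(H_0)=L(G)$, so $H_0$ has simple Lie algebra; in particular, the identity component $H_0^0$ is a connected $\Qp$-algebraic group which is $\Qp$-almost simple.

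Next, I reduce to the case that $H_0$ is connected and adjoint. The subgroup $H_0^0(\Qp)$ is open of finite index in $H_0(\Qp)$, so $G\cap H_0^0(\Qp)$ is an open-closed normal subgroup of $G$; topological simplicity of $G$, together with the fact that $G$ is nondiscrete (since $L(G)\neq 0$), gives $G\sleq H_0^0(\Qp)$. Replacing $H_0$ with $H_0^0$ and then passing to the adjoint quotient $H:=H_0/Z(H_0)$, the induced map $G\to H(\Qp)$ has kernel $G\cap Z(H_0)(\Qp)$, a finite central subgroup. Since $G$ has simple (hence non-abelian) Lie algebra, $Z(G)$ is trivial by topological simplicity, so this kernel is trivial and $G$ embeds as a closed subgroup of $H(\Qp)$ with $L(G)=L(H)$. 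Moreover, $H$ must be $\Qp$-isotropic: otherwise $H(\Qp)$ is compact, forcing $G$ compact, hence profinite, hence elementary---contradicting the hypothesis.

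Finally, I identify $G$ with $H(\Qp)^+$. Since $L(G)=L(H)$, the closed subgroup $G\sleq H(\Qp)$ is open. The subgroup $H(\Qp)^+$ is also open and normal in $H(\Qp)$, so $G\cap H(\Qp)^+$ is open-closed-normal in $G$, whence using nondiscreteness one obtains $G\sleq H(\Qp)^+$. To promote this to equality, I appeal to the Tits simplicity theorem: for an adjoint $\Qp$-simple isotropic $\Qp$-algebraic group $H$, the group $H(\Qp)^+$ is abstractly simple. Combining this with the openness of $G$ inside $H(\Qp)^+$ and the topological simplicity of $G$, a normal-closure and centralizer argument yields $G=H(\Qp)^+$. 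The main obstacle is this last identification: one must use both Tits's theorem and the rigidity coming from $L(G)=L(H)$, and there is the preliminary subtlety that $\Qp$-simplicity of $H$ does not immediately imply absolute almost simplicity, so invoking Tits's theorem may require first writing $H=R_{F/\Qp}(S)$ for a finite separable extension $F/\Qp$ with $S$ absolutely almost simple adjoint and then working with $S(F)^+=H(\Qp)^+$.
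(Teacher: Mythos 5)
Your overall strategy is viable and genuinely different from the paper's: the paper black-boxes the algebraic-group identification into \cite[Proposition 6.5]{CCLTV11}, obtaining $G\simeq S^{\times k}/Z$ with $S$ of simple type, then rules out $k>1$ and passes to the adjoint group by an isogeny argument; you instead reconstruct the identification by hand via the Zariski closure. Two points in your version need attention. The smaller one is the claim $L(H_0)=L(G)$: the fact that $L(G)$ is an algebraic subalgebra gives a connected algebraic subgroup $H_1$ with $L(H_1)=L(G)$ and $H_1\sleq H_0$, but it does not by itself bound $L(H_0)$ from above; a priori the Zariski closure of $G$ could have larger Lie algebra (its identity component need only normalize $H_1$). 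This is fixable -- e.g.\ by noting $G$ normalizes the Zariski closure of a compact open subgroup and then controlling the normalizer -- but the cleanest route is to use what the preceding proposition already gives you: $Ad(G)$ is \emph{open} in $\Aut(L(G))$, so you may take $H:=\Aut(L(G))^{\circ}$, which is adjoint and $\Qp$-simple with the correct Lie algebra, and skip the Zariski closure entirely.

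The genuine gap is the final identification $G=H(\Qp)^+$. Knowing that $G$ is an open topologically simple subgroup of the abstractly simple group $H(\Qp)^+$ does not suffice: compact open subgroups are proper open subgroups of $H(\Qp)^+$, so abstract simplicity (Tits) places no constraint on open subgroups per se, and no normal-closure or centralizer manipulation helps, since $G$ is not assumed normal in $H(\Qp)^+$ and its core there may well be trivial. What you actually need is the stronger fact that \emph{every proper open subgroup of $H(\Qp)^+$ is compact} -- a consequence of the Howe--Moore property of $H(\Qp)^+$ (a non-compact open subgroup forces an invariant vector in $\ell^2(H(\Qp)^+/G)$, hence finite index, hence equality by abstract simplicity), or alternatively of a Cartan-decomposition/contraction argument showing a non-compact open subgroup contains full root groups. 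Combined with the non-compactness of $G$ (which you have, since $G$ is non-elementary), this closes the argument; but that theorem, not Tits simplicity alone, is the missing ingredient. Your remark about $\Qp$-simplicity versus absolute simplicity is well taken and is handled exactly as you suggest (restriction of scalars), as the paper records in the Fact following the theorem.
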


This relationship gives strong restrictions on the automorphism group.
\begin{thm}
Suppose $G$ is a non-elementary topologically simple l.c.s.c. p-adic Lie group, let $\Aut(G)$ be the group of topological group automorphisms of $G$, and let $\Inn(G)$ be the collection of inner automorphisms. Then $\Aut(G)/\Inn(G)$ is finite.
\end{thm}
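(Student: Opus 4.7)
The strategy is to identify $G \simeq H(\Qp)^+$ for an adjoint $\Qp$-simple isotropic $\Qp$-algebraic group $H$ via the previous theorem, and then reduce the study of $\Aut(G)$ to that of the Lie algebra automorphism group, which is the $\Qp$-points of an algebraic group with only finitely many components.

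First I would differentiate. Since $G$ is $p$-adic analytic, Lazard's automatic smoothness result for continuous homomorphisms of $p$-adic Lie groups ensures that every $\alpha\in \Aut(G)$ is analytic and so induces a Lie algebra automorphism $d\alpha$ of $\mathfrak{h}:=L(G)$. The previous Proposition guarantees that $\mathfrak{h}$ is simple over $\Qp$. Differentiation therefore yields a homomorphism $d\colon \Aut(G)\to \Aut_{\Qp}(\mathfrak{h})$.

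Next I would argue $d$ is injective. If $d\alpha=\mathrm{id}$, then because $\exp$ is a local diffeomorphism at $0$, the automorphism $\alpha$ fixes pointwise a compact open subgroup $U=\exp(\mathfrak{h}_0)$ for some $\mathbb{Z}_p$-lattice $\mathfrak{h}_0\subseteq \mathfrak{h}$. For $g\in G$ and $u\in U\cap g^{-1}Ug$, applying $\alpha$ to $gug^{-1}\in U$ gives
\[
gug^{-1}=\alpha(gug^{-1})=\alpha(g)\,u\,\alpha(g)^{-1},
\]
so $\alpha(g)g^{-1}$ centralizes the open subgroup $U\cap g^{-1}Ug$. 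Such an element acts trivially on $\mathfrak{h}$ under $\mathrm{Ad}$ (differentiating again), but adjointness of $H$ forces $Z(G)=\ker(\mathrm{Ad})=\{1\}$, hence $\alpha(g)=g$ for all $g \in G$.

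Finally I would analyze the image. The automorphism functor $\underline{\Aut}(\mathfrak{h})$ is representable by a linear $\Qp$-algebraic group whose identity component is $\mathrm{Ad}(H)\simeq H$ (using adjointness) and whose component group is the finite group of $\Qp$-rational diagram automorphisms of the Dynkin datum. Hence $\Aut_{\Qp}(\mathfrak{h})/H(\Qp)$ is finite. Moreover $[H(\Qp):H(\Qp)^+]<\infty$ by the Borel--Tits theory of isotropic simple groups over local fields. Since $d$ carries $\Inn(G)$ onto $\mathrm{Ad}(G)=H(\Qp)^+$, the injectivity of $d$ yields an embedding $\Aut(G)/\Inn(G)\hookrightarrow \Aut_{\Qp}(\mathfrak{h})/H(\Qp)^+$ into a finite group, finishing the proof.

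The main obstacle is the injectivity of $d$: passing from triviality of $\alpha$ on $\mathfrak{h}$ to triviality on all of $G$. This depends crucially on $G$ having trivial centre (adjointness of $H$), so that centralizers of open neighborhoods of $1$ collapse, together with $\exp$ being a local diffeomorphism; once injectivity is established, the remaining steps are bookkeeping with classical results on adjoint simple algebraic groups over $\Qp$.
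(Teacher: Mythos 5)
Your argument is correct, but it takes a genuinely different route from the paper. The paper first passes (via Weil restriction) to $G\simeq S(k)^+$ with $S$ adjoint \emph{absolutely} simple over a finite extension $k$ of $\Qp$, and then invokes the Borel--Tits theorem on abstract automorphisms of $S(k)^+$: every topological automorphism is $\beta\circ\phi^0$ for a field automorphism $\phi$ of $k$ and a $k$-isomorphism $\beta$. Finiteness of $\Aut(k/\Qp)$ gives a finite-index subgroup $\Sigma_1$, which embeds into $\Aut(S)(k)$; the algebraic fact $\Aut(S)^{\circ}\simeq S$ plus $[S(k):S(k)^+]<\infty$ finishes the proof. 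You instead stay over $\Qp$, differentiate (using automatic analyticity of continuous homomorphisms of $p$-adic Lie groups) to get $d\colon\Aut(G)\to\Aut_{\Qp}(L(G))$, prove injectivity of $d$ by hand, and use that the automorphism group scheme of the semisimple Lie algebra has identity component $H$ (adjoint) and finitely many components, together with $[H(\Qp):H(\Qp)^+]<\infty$. What your route buys: you avoid the full strength of the Borel--Tits classification of abstract automorphisms of rational points, and you never have to manage field automorphisms of $k$, since the Galois-semilinear automorphisms appear simply as extra components of the $\Qp$-algebraic group $\underline{\Aut}(L(G))$ even when $H$ is not absolutely simple. The price is the injectivity of $d$, which is the one step needing care; your argument there is sound --- an element of $\ker(d\alpha)$ differs from its image by something centralizing an open subgroup, hence lying in $\ker(\mathrm{Ad})$, which is trivial by adjointness (equivalently, by the triviality of $QZ(G)$ for a non-elementary topologically simple group). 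One cosmetic slip: the element that centralizes $U\cap g^{-1}Ug$ is $\alpha(g)^{-1}g$, not $\alpha(g)g^{-1}$; this does not affect the conclusion.
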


\begin{acknowledgements} 
Many of the results herein form part of author's thesis work at the University of Illinois at Chicago. The author thanks his thesis adviser Christian Rosendal and the University of Illinois at Chicago. The author also thanks Pierre-Emmanuel Caprace and Ramin Takloo-Bighash for their many thoughtful comments and helpful suggestions. The author finally thanks the anonymous referee for suggesting a generalization that gave Proposition~\ref{prop:loc_nil} and for pointing out results in the literature that streamlined many of the proofs.
\end{acknowledgements}

\section{Generalities on t.d.l.c. groups}
We begin with a brief overview of necessary background. The notations, definitions, and facts discussed here are used frequently and, typically, without reference.

\subsection{Notations} All groups are taken to be Hausdorff topological groups and are written multiplicatively. Topological group isomorphism is denoted $\simeq$. We use ``t.d.", ``l.c.", and ``s.c." for ``totally disconnected", ``locally compact", and ``second countable", respectively.\par

\indent For a topological group $G$, $S(G)$ and $\Uca(G)$ denote the collection of closed subgroups of $G$ and the collection of compact open subgroups of $G$, respectively. All subgroups are taken to be closed unless otherwise stated. We write $H\leqslant _oG$ and $H\sleq_{cc} G$ to indicate $H$ is an open subgroup of $G$ and $H$ is a cocompact subgroup of $G$, respectively. Recall $H\sleq G$  is cocompact if the quotient space $G/H$ is compact in the quotient topology. 

\indent For any subset $K\subseteq G$, $C_G(K)$ is the collection of elements of $G$ that centralize every element of $K$. We denote the collection of elements of $G$ that normalize $K$ by $N_G(K)$. The topological closure of $K$ in $G$ is denoted by $\ol{K}$. For $A,B\subseteq G$, we put 
\[
\begin{array}{c}
A^B:=\left\{bab^{-1}\mid a\in A\text{ and }b\in B\right\} \text{ and } \\
\left[A,B\right]:=\grp{aba^{-1}b^{-1}\mid a\in A\text{ and }b\in B}.
\end{array}
\]
For $k\sgeq 1$, $A^{\times k}$ denotes the $k$-th Cartesian power. For $a,b\in G$, $[a,b]:=aba^{-1}b^{-1}$.\par

\indent Ordinal numbers are used in this work. The first countable transfinite ordinal is denoted by $\omega$; as we assume the natural numbers $\Nb$ contain zero, $\omega=\Nb$ as linear orders. We use $\Nb$ and $\omega$ interchangeably.\par

\indent We use $\mathbb{P}$ to denote the prime numbers. For $\pi\subseteq \mathbb{P}$, we put $\pi':=\mathbb{P}\setminus \pi$. When $\pi=\{p\}$, the set $\pi'$ is written $p'$.

\subsection{Basic theory} 
The foundation of the theory of t.d.l.c. groups is an old result of D. van Dantzig:

\begin{thm}[van Dantzig {\cite[(7.7)]{HR79}}]
 A t.d.l.c. group admits a basis at $1$ of compact open subgroups.
\end{thm}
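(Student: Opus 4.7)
The plan is to produce, for any open neighborhood $U$ of $1$, a compact open subgroup of $G$ contained in $U$. I would proceed in three stages: first isolate a compact open \emph{neighborhood} of $1$ inside $U$, then use continuity of multiplication to produce a small set that is absorbed by it under left multiplication, and finally take the group generated by that small set.

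\textbf{Step 1 (compact open neighborhood).} By local compactness, pick a compact neighborhood $C$ of $1$ with $C\subseteq U$. Because $G$ is totally disconnected, the subspace $C$ is a compact Hausdorff totally disconnected space, and such spaces are zero-dimensional; hence the clopen subsets of $C$ form a neighborhood basis at $1$ in $C$. I would choose a clopen (in $C$) neighborhood $K$ of $1$ lying inside the interior of $C$. Such a $K$ is closed in $G$ (compact in Hausdorff) and, since $K$ is open in $C$ and contained in $\mathrm{int}(C)$, it is also open in $G$. Thus $K$ is a compact open neighborhood of $1$ with $K\subseteq U$.

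\textbf{Step 2 (absorbing neighborhood).} I would use continuity of multiplication, together with the compactness of $K$, to manufacture an open symmetric neighborhood $V$ of $1$ satisfying $VK\subseteq K$ and $V\subseteq K$. Concretely, for each $k\in K$, continuity of multiplication at $(1,k)$ yields open neighborhoods $V_k\ni 1$ and $O_k\ni k$ with $V_kO_k\subseteq K$; covering the compact set $K$ by finitely many $O_{k_1},\dots,O_{k_n}$ and setting $V_0:=\bigcap_i V_{k_i}$ gives $V_0K\subseteq K$. Replacing $V_0$ by $V:=V_0\cap V_0^{-1}\cap K$ keeps the absorption property and makes $V$ symmetric and contained in $K$.

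\textbf{Step 3 (the subgroup).} Let $H:=\bigcup_{n\sgeq 1}V^n$, the subgroup generated by the symmetric open set $V$. Since $V$ is open, $H$ is an open subgroup of $G$. An easy induction using $VK\subseteq K$ shows $V^n\subseteq K$ for every $n$, so $H\subseteq K$; being a closed (open subgroups are closed) subset of the compact set $K$, $H$ is compact. Thus $H$ is a compact open subgroup with $H\subseteq K\subseteq U$, completing the proof.

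\textbf{Main obstacle.} The one genuinely nontrivial input is Step~1: translating total disconnectedness into the existence of a basis of \emph{compact open} neighborhoods at $1$. This rests on the classical fact that a compact Hausdorff totally disconnected space is zero-dimensional. Once a compact open neighborhood is in hand, the passage to a subgroup via the absorbing trick in Steps~2--3 is a standard continuity-and-compactness argument and poses no further difficulty.
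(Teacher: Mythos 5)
Your argument is correct and is the standard proof of van Dantzig's theorem; the paper itself gives no proof, simply citing Hewitt--Ross (7.7), and your three-step argument (zero-dimensionality of compact Hausdorff totally disconnected spaces to get a compact open neighborhood $K$, the compactness/continuity trick to get an absorbing symmetric $V$, then $H=\bigcup_{n\sgeq 1}V^n$) is exactly the classical route. One cosmetic slip: $V:=V_0\cap V_0^{-1}\cap K$ need not be symmetric since $K$ need not be; intersect further with $K^{-1}$ (or replace $K$ by $K\cap K^{-1}$ at the outset), which preserves all the properties you use.
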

It follows the compact open subgroups in a t.d.l.c. groups given by van Dantzig's theorem are profinite, i.e. inverse limits of finite groups. Profinite groups will be discussed at length later. For the moment, we remark that compact groups and profinite groups are one and the same in the category of t.d.l.c groups. We thus use ``compact group" and ``profinite group" interchangeably. We remark further that a profinite group is pro-$\pi$ for $\pi$ some set of primes $\pi$ if every finite continuous quotient is a $\pi$-group - that is the order is divisible by only primes in $\pi$.

\indent Pro-$\pi$ groups with $\pi$ a finite set of primes play an important role in the study of compactly generated groups.

\begin{thm}[{Caprace, see \cite[Proposition 4.9]{CRW_2_13}}]\label{thm:Caprace_loc_pro_pi}
If $G$ is a compactly generated t.d.l.c. group, then for every compact open subgroup $U$ there is a compact normal $K\trianglelefteq G$ so that $K\sleq U$ and that $G/K$ is locally pro-$\pi$ for some finite set of primes $\pi$.
\end{thm}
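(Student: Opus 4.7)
The plan is to use compact generation of $G$ to isolate a finite amount of conjugation data for $U$, extract from it a finite set of primes $\pi$, and then build $K$ as the intersection of an iteratively constructed chain of compact open subgroups of $U$, each normal in $U$ and with consecutive indices that are $\pi$-numbers. Since $K\sleq U$, the quotient $U/K$ will be compact open in $G/K$, so it suffices to arrange that $U/K$ is pro-$\pi$.

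First I would cover a compact symmetric generating set of $G$ by finitely many left $U$-cosets to produce a finite symmetric $F\subseteq G$ with $G=\langle F\cup U\rangle$. For each $f\in F$, commensurability of $U$ and $fUf^{-1}$ gives a finite index $n_f:=[U:U\cap fUf^{-1}]$; I let $\pi$ be the finite set of primes dividing some $n_f$. I then build a descending chain $U=V_0\sgeq V_1\sgeq V_2\sgeq\cdots$ of compact open subgroups of $U$, each normal in $U$, by setting
\[
V_{n+1}\;=\;\bigcap_{u\in U}u\Bigl(V_n\cap\bigcap_{f\in F}fV_nf^{-1}\Bigr)u^{-1}.
\]
The inner subgroup is compact open of finite index in $V_n$, so it admits only finitely many distinct $U$-conjugates, and $V_{n+1}$ is again compact open and normal in $U$. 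Setting $K=\bigcap_n V_n$, the $U$-normality is inherited termwise, while the inclusion $V_{n+1}\sleq fV_nf^{-1}$ together with the symmetry of $F$ yields $fKf^{-1}=K$ for all $f\in F$. Since $F\cup U$ generates $G$, this forces $K\trianglelefteq G$, and $K$ is compact as a closed subgroup of $U$.

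The main obstacle is controlling primes along the induction: I need each $[U:V_n]$ to be a $\pi$-number, so that $U/K=\varprojlim U/V_n$ is pro-$\pi$. The key estimate factors $[V_n:V_n\cap fV_nf^{-1}]$ through the intermediate group $V_n\cap fUf^{-1}$. The embedding $V_n/(V_n\cap fUf^{-1})\hookrightarrow U/(U\cap fUf^{-1})$ shows the first factor divides $n_f$, a $\pi$-number, while the normality of $V_n$ in $U$ (hence of $fV_nf^{-1}$ in $fUf^{-1}$) provides an embedding $(V_n\cap fUf^{-1})/(V_n\cap fV_nf^{-1})\hookrightarrow (fUf^{-1})/(fV_nf^{-1})\cong U/V_n$, whose order is a $\pi$-number by the inductive hypothesis. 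The final $U$-intersection step introduces no new primes since each $U$-conjugate of the inner subgroup has the same index in $V_n$. With this prime-tracking induction in hand, $U/K$ is a compact open pro-$\pi$ subgroup of $G/K$, proving the claim.
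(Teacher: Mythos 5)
Your construction of $K$ (cover a compact generating set by finitely many cosets of $U$, iterate the conjugation-and-core operation, intersect) is sound and does produce a compact normal subgroup $K\sleq U$; it is essentially the kernel of the $G$-action on the Cayley--Abels graph $G/U$. The fatal problem is the prime-tracking, at both intersection steps. The claim that intersecting finitely many subgroups of $\pi$-index --- even pairwise conjugate ones, hence of equal index --- again yields a subgroup of $\pi$-index is false: in $A_4$ the two index-$4$ subgroups $\langle(123)\rangle$ and $\langle(124)\rangle$ meet trivially, in index $12$, and likewise the normal core of $\langle(123)\rangle$ in $A_4$ is trivial, so ``each $U$-conjugate has the same index'' does not save the core step either. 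This is not a repairable bookkeeping slip, because your choice of $\pi$ --- the primes dividing the indices $n_f=[U:U\cap fUf^{-1}]$ --- makes the conclusion itself false. Take $G=\Aut(T)$ for $T$ the $4$-regular tree, $U$ the stabilizer of a vertex $v_0$, and $F=\{f,f^{-1}\}$ with $f$ a translation carrying $v_0$ to an adjacent vertex $v_1$. Then every $n_f$ equals $4$, so your $\pi=\{2\}$; yet already $[U:U\cap fUf^{-1}\cap f^{-1}Uf]=4\cdot 3=12$ (the pointwise stabilizer of the path $f^{-1}v_0,v_0,v_1$), and, more decisively, the only compact normal subgroup of $G$ is trivial while every compact open subgroup of $G$ has a finite continuous quotient of order divisible by $3$, so $G/K$ is not locally pro-$\{2\}$ for \emph{any} admissible $K$.

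The repair is to enlarge $\pi$ to all primes at most the degree $d$ of the Cayley--Abels graph (equivalently, at most $|UFU/U|$) and to track primes not through index divisibility but through permutation representations: with $V_n$ the pointwise stabilizer in $U$ of the ball of radius $n$, the quotient $V_n/V_{n+1}$ embeds into a direct product of symmetric groups of degree at most $d$, one factor for each vertex of the $n$-sphere, so $[V_n:V_{n+1}]$ divides a power of $d!$. This is the standard proof of Caprace's result, which the paper cites rather than reproves, so there is no internal argument to compare against. A secondary point: your injection of coset spaces $V_n/(V_n\cap fUf^{-1})\hookrightarrow U/(U\cap fUf^{-1})$ gives only an inequality of cardinalities, not divisibility; divisibility does hold here, but only because $V_n\trianglelefteq U$ makes $V_n(U\cap fUf^{-1})$ an intermediate subgroup of $U$ containing $U\cap fUf^{-1}$. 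The same conflation of bounds with divisibility is what conceals the main error.
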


\indent Topological analogues of the familiar isomorphism theorems hold for t.d.l.c.s.c. groups. The first isomorphism theorem requires non-trivial modification, hence we recall its statement.

\begin{thm}[{\cite[(5.33)]{HR79}}]
Let $G$ be a t.d.l.c.s.c. group, $A\sleq G$ be a closed subgroup, and $H\trianglelefteq G$ be a closed normal subgroup. If $AH$ is closed, then $AH/H\simeq A/(A\cap H)$ as topological groups.
\end{thm}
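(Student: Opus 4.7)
The plan is to build the standard map and then upgrade a continuous algebraic isomorphism to a topological one using the fact that all the groups in sight are Polish. Concretely, I would define $\varphi\colon A\rightarrow AH/H$ by $\varphi(a):=aH$, i.e.\ as the restriction to $A$ of the composition of the inclusion $AH\hookrightarrow G$ with the quotient map $G\rightarrow G/H$. This is a continuous group homomorphism, and its kernel is exactly $A\cap H$. For surjectivity, every element of $AH$ has the form $ah$ with $a\in A$ and $h\in H$, and $ahH=aH=\varphi(a)$, so $\varphi(A)=AH/H$. Passing to the algebraic quotient therefore yields a continuous bijective homomorphism $\tilde\varphi\colon A/(A\cap H)\rightarrow AH/H$.

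Next I would verify that both sides are Polish topological groups, so that the open mapping theorem for Polish groups applies. The subgroup $A$ is closed in $G$, hence t.d.l.c.s.c., and $A\cap H$ is closed in $A$, so $A/(A\cap H)$ is t.d.l.c.s.c.\ (second countability is inherited, local compactness passes to quotients by closed subgroups). On the other side, $AH$ is closed in $G$ by hypothesis, hence itself t.d.l.c.s.c., and $H$ is a closed normal subgroup of $AH$, so $AH/H$ is likewise t.d.l.c.s.c. Both groups are then Polish, since every t.d.l.c.s.c.\ group admits a compatible complete metric.

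The key step is then to conclude that $\tilde\varphi$ is a topological isomorphism, for which it suffices to show it is open. Since $\tilde\varphi$ is a continuous surjective homomorphism between Polish groups, the Pettis/Banach open mapping theorem applies: for any identity neighbourhood $V\sleq A/(A\cap H)$, its image $\tilde\varphi(V)$ is a Baire non-meager set in the Polish group $AH/H$, and a standard Pettis argument shows $\tilde\varphi(V)\tilde\varphi(V)^{-1}$ contains an identity neighbourhood in $AH/H$; combining this with surjectivity yields openness. This gives $\tilde\varphi$ as a continuous bijective open homomorphism, hence a topological isomorphism $A/(A\cap H)\simeq AH/H$.

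The only real obstacle is the openness step, which is where the second countability assumption does essential work; without it, a continuous algebraic isomorphism between Hausdorff topological groups need not be a homeomorphism. Since the paper's standing framework is t.d.l.c.s.c., this is automatic, and the theorem follows.
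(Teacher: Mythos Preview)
Your argument is correct: the natural map $A/(A\cap H)\to AH/H$ is a continuous bijective homomorphism between Polish groups, and the open mapping theorem finishes the job. Note, however, that the paper does not give its own proof of this statement---it simply quotes it from Hewitt--Ross \cite[(5.33)]{HR79}, where the result is proved for $\sigma$-compact locally compact groups via the locally compact open mapping theorem; your Polish-group version is an equally valid route under the standing second countability hypothesis.
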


\indent In the category of t.d.l.c.s.c. groups, care must be taken with infinite unions. Suppose $(G_i)_{i\in \omega}$ is an increasing sequence of t.d.l.c.s.c. groups such that $G_i\sleq_oG_{i+1}$ for each $i$. The group $G:=\bigcup_{i\in \omega} G_i$ is then a t.d.l.c.s.c. group under the \textbf{inductive limit topology}: $A\subseteq G$ is defined to be open if and only if $A\cap G_i$ is open in $G_i$ for each $i$. \par

\indent Using our notion of an infinite union, we may define an infinite direct product that stays in the category of t.d.l.c.s.c. groups. This definition goes back to J. Braconnier.

\begin{defn} 
Suppose $A$ is a countable set, $(G_a)_{a\in A}$ is a sequence of t.d.l.c.s.c. groups, and for each $a\in A$ there is a distinguished $U_a\in \Uca(G_a)$. Letting $\{a_i\}_{i\in \omega}$ enumerate $A$, put
\begin{enumerate}[$\bullet$]
\item~ $S_0:=\prod_{i\in \omega}U_{a_i}$ and give $S_0$ the product topology.

\item~ $S_{n+1}:=G_{a_0}\times\dots\times G_{a_n}\times \prod_{i\sgeq n+1}U_{a_i}$ and give $S_{n+1}$ the product topology.
\end{enumerate}
The \textbf{local direct product of $(G_a)_{a\in A}$ over $(U_a)_{a\in A}$} is defined to be 
\[
\bigoplus_{a\in A}\left(G_a,U_a\right):=\bigcup_{i\in \omega}S_i
\]
with the inductive limit topology. 
\end{defn}

Since $S_n\sleq_o S_{n+1}$ for each $n\in \omega$, the group $\bigoplus_{a\in A}(G_a,U_a)$ is a t.d.l.c.s.c. group with $\prod_{a\in A}U_a$ as a compact open subgroup. The isomorphism type of a local direct product is also independent of the enumeration of $A$ used in the definition.\par

\indent There is a weakening of the notion of a direct product: A t.d.l.c.s.c. group $G$ is a \textbf{quasi-product} with quasi-factors $N_1,\dots,N_k$ if each $N_i$ is a closed normal subgroup of $G$ and the multiplication map $N_1\times\dots \times N_k\rightarrow G$ is injective with dense image. This notion naturally generalizes to local direct products: A group $G$ is a \textbf{quasi local direct product} of $(N_i,U_i)_{i\in \omega}$ if $N_i$ is a closed normal subgroup of $G$, $U_i\in \Uca(N_i)$, and the multiplication map $m:\bigoplus_{i\in \omega }(N_i,U_i)\rightarrow G$ is a well-defined injective homomorphism with dense image.

\indent There are two important concepts concerning subgroups of a t.d.l.c.s.c. group $G$. First, following P-E. Caprace, C. Reid, and G. Willis \cite{CRW_1_13}, a subgroup $K\sleq G$ is called \textbf{locally normal} if $K$ is compact and $N_G(K)$ is open. Second, two subgroups $M\sleq G$ and $N\sleq G$ are \textbf{commensurate}, denoted $M\sim_c N$, if $|M:M\cap N|$ and $|N:M\cap N|$ are finite. \par

\indent We require a few additional facts around commensurated subgroups. It is easy to check $\sim_c$ is an equivalence relation on $S(G)$ and is preserved under the action by conjugation of $G$ on $S(G)$. The commensurability relation gives rise to an additional subgroup: For $N\leqslant G$, the \textbf{commensurator subgroup} of $N$ in $G$ is 
\[
Comm_G(N):=\left\{g\in G\;|\;gNg^{-1}\sim_cN\right\}.
\]
If $Comm_G(N)=G$, we say $N$ is \textbf{commensurated}.\par

\indent We shall make frequent use of two canonical normal subgroups of a t.d.l.c.s.c. group $G$. Generalizing the notion of the centre of a group, the \textbf{quasi-centre}, defined in \cite{BM00}, of $G$ is 
\[
QZ(G):=\left\{g\in G\mid C_G(g)\text{ is open}\right\}.
\]
The group $QZ(G)$ is a characteristic but not necessarily closed subgroup. For the second canonical normal subgroup, a closed subgroup of $G$ is \textbf{locally elliptic} if every finite subset generates a relatively compact subgroup. V.P. Platonov \cite{Plat66} shows there is a unique maximal closed normal subgroup of $G$ that is locally elliptic; this subgroup is called the \textbf{the locally elliptic radical} and is denoted by $\Rad{\mc{LE}}(G)$. The same work demonstrates that a t.d.l.c.s.c. group is locally elliptic if and only if it is a countable increasing union of compact open subgroups. \par

\indent The locally elliptic radical along with Theorem~\ref{thm:Caprace_loc_pro_pi} give a somewhat canonical decomposition for non-compactly generated groups. 
 
\begin{cor}\label{cor:loc_nil_main2}
Suppose $G$ is a t.d.l.c.s.c. group. Then there is an increasing exhaustion $(H_i)_{i\in \omega}$ of $G$ by compactly generated open subgroups so that for each $i$, $H_i/\Rad{\mc{LE}}(H_i)$ is locally pro-$\pi$ for some finite set of primes $\pi$. 
\end{cor}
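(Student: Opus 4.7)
The plan is to combine van Dantzig's theorem and second countability to produce the exhaustion $(H_i)_{i\in\omega}$, then apply Theorem~\ref{thm:Caprace_loc_pro_pi} to each compactly generated open subgroup $H_i$, and finally note that the compact normal kernel supplied by Caprace's theorem lies inside $\Rad{\mc{LE}}(H_i)$ so that the stronger quotient remains locally pro-$\pi$.

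First I would fix a compact open subgroup $U\in \Uca(G)$ via van Dantzig and a countable dense subset $\{g_n\}_{n\in \omega}\subseteq G$ using second countability. Setting $H_i:=\grp{U,g_0,\dots,g_i}$ gives an $\subseteq$-increasing sequence of open, compactly generated subgroups of $G$. Since $\bigcup_{i\in\omega}H_i$ is open, hence closed, and contains the dense set $\{g_n\}_{n\in\omega}$, it equals $G$. This yields the required exhaustion.

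Next, for each $i$ I would apply Theorem~\ref{thm:Caprace_loc_pro_pi} to the compactly generated t.d.l.c.s.c. group $H_i$ with the compact open subgroup $U\cap H_i=U$. This produces a compact normal subgroup $K_i\trianglelefteq H_i$ with $K_i\sleq U$ such that $H_i/K_i$ is locally pro-$\pi_i$ for some finite set of primes $\pi_i$. Since $K_i$ is compact, it is locally elliptic, so by the maximality characterization of $\Rad{\mc{LE}}$ (Platonov) we obtain $K_i\sleq \Rad{\mc{LE}}(H_i)$.

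To finish, I would verify that the further quotient $H_i/\Rad{\mc{LE}}(H_i)$ remains locally pro-$\pi_i$. Letting $V/K_i$ be a compact open pro-$\pi_i$ subgroup of $H_i/K_i$, its image in $H_i/\Rad{\mc{LE}}(H_i)$ is a compact open subgroup (being a continuous image under an open quotient map) and is a continuous quotient of the pro-$\pi_i$ group $V/K_i$, hence pro-$\pi_i$. This gives the desired conclusion. There is no serious obstacle here: the argument is essentially a direct packaging of Caprace's theorem with a countable exhaustion, with the only mild subtlety being that the locally pro-$\pi$ property is preserved under passing to further closed normal quotients.
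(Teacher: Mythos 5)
Your proof is correct and follows exactly the route the paper intends (the corollary is stated without an explicit proof, as a direct consequence of van Dantzig plus Theorem~\ref{thm:Caprace_loc_pro_pi} and the maximality of $\Rad{\mc{LE}}$): exhaust $G$ by $H_i=\grp{U,g_0,\dots,g_i}$, apply Caprace's theorem to each $H_i$, observe that the compact normal $K_i$ lies in $\Rad{\mc{LE}}(H_i)$, and pass to the further quotient. The final verification that the locally pro-$\pi$ property survives the extra quotient is handled properly.
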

\noindent We stress that $\pi$ in the above corollary depends on $i$ and, in general, grows as $i$ increases.\par

\indent We conclude by recalling a general technique for producing normal but \emph{not necessarily closed} subgroups of a t.d.l.c.s.c. group $G$. A subset $\mathcal{A}\subseteq S(G)$ is \textbf{conjugation invariant} if $\mathcal{A}$ is fixed setwise under the action by conjugation of $G$ on $S(G)$. If $\mc{A}$ is fixed setwise by all topological group automorphisms of $G$, we say $\mc{A}$ is \textbf{invariant}. We say $\mc{A}$ is \textbf{hereditary} if for all $A\in \mathcal{A}$, $S(A)\subseteq \mathcal{A}$. The $\mathcal{A}$\textbf{-core}, denoted $N_{\mathcal{A}}$, is the collection of $g\in G$ such that for all $C\in \mc{A}$, $\cgrp{g,C}\in\mathcal{A}$.\par

\indent By results of \cite{W_1_14} or as an easy verification, if $\mathcal{A}\subseteq S(G)$ is (invariant) conjugation invariant and hereditary, then $N_{\mathcal{A}}$ is a (characteristic) normal subgroup of $G$. A subgroup of $G$ of the form $N_{\mathcal{A}}$ for some conjugation invariant and hereditary $\mathcal{A}\subseteq S(G)$ is called a \textbf{synthetic subgroup} of $G$. It is easy to see all closed normal subgroups are synthetic subgroups. However, the collection of synthetic subgroups of $G$ often strictly contains the collection of closed normal subgroups of $G$. For example, $QZ(G)$ is a synthetic subgroup and is rarely closed.

\subsection{Profinite groups} 
We now recall a few facts and definitions from the theory of profinite groups; our discussion falls well short of comprehensive. We direct the interested reader to the excellent texts \cite{RZ00} and \cite{Wil98}.\par

\indent  Profinite groups admit a basis at $1$ of open normal subgroups. For a profinite group $U$, we say $(U_i)_{i\in \omega}$ is a \textbf{normal basis at} $1$ for $U$, if $U_0=U$, $(U_i)_{i\in \omega}$ is $\subseteq$-decreasing with trivial intersection, and for each $i$, $U_i\trianglelefteq_o U$. \par

\indent The group $U$ is said to be \textbf{pro-$x$} for $x$ some property of finite groups if $U$ is an inverse limit of finite groups with property $x$. For example, $U$ may be pro-$p$ for some prime $p$ or pro-nilpotent. \par

\indent We say $U$ is \textbf{topologically (finitely generated) $r$-generated} if $U$ admits a dense (finitely generated) $r$-generated subgroup. Central to this work,

\begin{defn}
A profinite group $U$ has \textbf{rank} $0<r\sleq \infty$ if for every closed $H\sleq U$, $H$ is topologically $r$-generated. If a profinite group $U$ has rank $r$ for some $r<\infty$, we say $U$ has \textbf{finite rank}.
\end{defn}

\indent Profinite groups with finite rank have a well understood structure.

\begin{thm}[{\cite[Theorem 8.4.1]{Wil98}}]\label{thm:finrank}
If $U$ is a profinite group with finite rank, then $U$ has a series of normal subgroups $\{1\}\sleq C\sleq N\sleq U$ such that $C$ is pro-nilpotent, $N/C$ is solvable, and $U/N$ is finite.
\end{thm}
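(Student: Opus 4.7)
The plan is to construct $C$ and $N$ as canonical radicals of $U$. Take $C$ to be the pro-Fitting subgroup, i.e., the largest closed normal pro-nilpotent subgroup, and $N$ to be the pro-solvable radical, i.e., the largest closed normal pro-solvable subgroup. Both exist as genuine maxima: for closed normal pro-nilpotent subgroups $M_1, M_2 \trianglelefteq U$, Fitting's theorem applied in each finite quotient shows $\overline{M_1 M_2}$ is again pro-nilpotent, and a Zorn/inverse-limit argument produces a unique maximum $C$. The analogous argument for solvability yields $N$. Both are characteristic in $U$, and $C \sleq N$.

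The two substantive tasks are then (i) showing $U/N$ is finite, and (ii) showing $N/C$ is solvable as a topological group. For (i), the quotient $U/N$ is itself a profinite group of rank at most $r$, and by maximality of $N$ it has no nontrivial closed normal pro-solvable subgroup. So the task reduces to showing: a profinite group of finite rank with trivial pro-solvable radical is finite. For such a group, the socle is a (possibly infinite) direct product of non-abelian finite simple groups; a rank bound then forces finiteness of this socle, since both the isomorphism types of the simple factors and the number of copies of each must be bounded in terms of $r$. Above the socle, the whole group embeds into the (finite) automorphism group of the socle, so the group itself is finite.

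For (ii), pass to finite quotients of $N$. In each finite solvable quotient $Q$, the image $\overline{C}$ of $C$ contains the Fitting subgroup $F(Q)$, so $Q/\overline{C}$ is a quotient of the finite solvable group $Q/F(Q)$. A classical rank-versus-derived-length bound for finite solvable groups states that the derived length of a finite solvable group of Pr\"ufer rank at most $r$ is bounded by a function of $r$ alone; this gives a uniform bound on the derived length of the finite solvable quotients of $N/C$, and passing to the inverse limit shows that $N/C$ is solvable of bounded derived length.

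The main obstacle is step (i): showing that a finite-rank profinite group with trivial pro-solvable radical is finite. This requires reasonably deep input from finite group theory — one needs a quantitative statement that the rank of a direct power $S^k$ of a non-abelian finite simple group $S$ grows with both $k$ and $|S|$, ensuring that only finitely many non-abelian simple composition factors can appear with bounded multiplicity, and one must then control the extension structure above the socle via a centralizer-of-socle argument. In a CFSG-free treatment this is the technically demanding step; once it is established, step (ii) and the assembly of the series reduce to Fitting theory for profinite groups and routine bookkeeping with inverse limits.
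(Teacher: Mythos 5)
This statement is quoted in the paper from Wilson's book \cite[Theorem 8.4.1]{Wil98} and is not proved there, so your proposal must stand on its own; its overall architecture (take $C$ and $N$ to be the pronilpotent and prosoluble radicals, then feed in finite group theory) is the standard one, but both substantive steps contain genuine errors. In step (ii) the key containment is backwards: the image of $C$ in a finite quotient $Q$ of $N$ is a nilpotent normal subgroup of $Q$ and hence is \emph{contained in} $F(Q)$; it need not contain $F(Q)$ (for the prodihedral group $\widehat{\mathbb{Z}_3}\rtimes C_2$ one has $C=\widehat{\mathbb{Z}_3}$, whose image in the quotient $C_2$ is trivial while $F(C_2)=C_2$). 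Worse, the ``classical rank-versus-derived-length bound'' you invoke is false: finite soluble, indeed finite $p$-, groups of rank $r$ do not have $r$-bounded derived length. The finite $p$-groups $\ker\bigl(\mathrm{SL}_2(\mathbb{Z}/p^n\mathbb{Z})\to\mathrm{SL}_2(\mathbb{Z}/p\mathbb{Z})\bigr)$ all have rank at most $3$ but unbounded derived length, since their inverse limit is an open, non-soluble subgroup of $\mathrm{SL}_2(\mathbb{Z}_p)$. If your bound were true the theorem would be trivial ($N$ itself would be soluble and one could take $C=\{1\}$); the pronilpotent layer exists precisely to absorb this unbounded derived length. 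The correct input is Kov\'{a}cs' theorem (resting on the Zassenhaus--Mal'cev--Dixon bound for completely reducible soluble linear groups): for a finite soluble $Q$ of rank $r$, the quotient $Q/F(Q)$ has derived length at most $\delta(r)$. One then observes that $\overline{N^{(\delta(r))}}$ has nilpotent image in every finite continuous quotient of $N$, hence is pronilpotent and lies in $C$, so $N/C$ is soluble.

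Step (i) is also flawed beyond the incompleteness you acknowledge. The claim that ``the isomorphism types of the simple factors \dots must be bounded in terms of $r$'' is false: $\mathrm{PSL}_2(p)$ has rank $2$ for every prime $p\geq 5$ (Dickson's subgroup classification), so there is no bound, in terms of rank alone, on the order of a finite group modulo its soluble radical. Consequently the finite quotients of $U/N$ can have unbounded order and finiteness cannot be extracted quotient by quotient; moreover you have not justified that the profinite group $U/N$ possesses minimal closed normal subgroups at all (profinite groups in general do not), nor that such subgroups are finite. What survives from your sketch is the count of factors: every non-abelian finite simple group contains a Klein four-group (Feit--Thompson plus Burnside's normal $p$-complement theorem), so a product of $k$ of them has rank at least $2k$, bounding the socle of each finite quotient by $r/2$ factors; combining this with $C_Q(\mathrm{Soc}(Q))=\{1\}$ and the Schreier conjecture bounds the total number of non-abelian composition factors of every finite quotient of $U/N$ by a function of $r$. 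Since this count is non-decreasing along the inverse system, it stabilises, after which the transition kernels are soluble; triviality of the prosoluble radical of $U/N$ then forces $U/N$ to be finite. As written, your argument asserts a false uniform bound in place of this stabilisation mechanism, so both halves of the proof need repair.
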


\indent Profinite groups have a Sylow theory arising from the inverse limit construction. For a prime $p$, a \textbf{$p$-Sylow subgroup} of a profinite group $U$, denoted $U_p$, is a maximal pro-$p$ subgroup of $U$. Analogous to the finite setting,

\begin{prop}[{\cite[2.2.2]{Wil98}}] Let $U$ be a profinite group and $p$ a prime. Then
\begin{enumerate}[(1)]
\item~$U$ has $p$-Sylow subgroups.

\item~All $p$-Sylow subgroups are conjugate.

\item~Every pro-$p$ subgroup is contained in a $p$-Sylow subgroup.
\end{enumerate}
\end{prop}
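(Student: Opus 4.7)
The plan is to reduce everything to the classical Sylow theorems for finite groups via the inverse limit presentation $U \simeq \varprojlim_{N \in \mathcal{N}} U/N$, where $\mathcal{N}$ is a filter basis of open normal subgroups of $U$ with trivial intersection, and $\pi_N \colon U \to U/N$ denotes the canonical projection. The central tool is that each finite quotient $U/N$ has a non-empty, finite set $X_N$ of $p$-Sylow subgroups, and the maps $U/M \to U/N$ (for $M \subseteq N$ in $\mathcal{N}$) carry $p$-Sylows to $p$-Sylows, making $(X_N)_{N \in \mathcal{N}}$ an inverse system of non-empty finite (compact, discrete) sets whose inverse limit is non-empty by Tychonoff.

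For (1) and (3), I would prove them simultaneously by applying Zorn's lemma to the poset of pro-$p$ subgroups of $U$ containing a given pro-$p$ subgroup $H$. The main verification is that the closure of an ascending union of pro-$p$ subgroups is again pro-$p$: for any open normal $N$, the image in the finite group $U/N$ is an ascending union of $p$-subgroups, which must stabilize to a $p$-group. This produces a maximal pro-$p$ subgroup above any pro-$p$ subgroup, yielding (3); taking $H = \{1\}$ yields (1). Any coherent family drawn from $\varprojlim X_N$ furnishes explicit examples $P := \varprojlim P_N$ of $p$-Sylow subgroups, which will be needed in (2).

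For (2), the key ancillary claim is that if $P$ is a $p$-Sylow of $U$ and $N \in \mathcal{N}$, then $\pi_N(P)$ is a full $p$-Sylow of $U/N$. If not, pick a larger $p$-Sylow $S > \pi_N(P)$ in $U/N$ and build a pro-$p$ subgroup $Q$ of $U$ with $\pi_N(Q) = S$ and $Q \supseteq P$ by choosing, for each $M \subseteq N$, a $p$-Sylow $T_M$ of the finite group $\pi_M^{-1}(S) \sleq U/M$ that contains $\pi_M(P)$, and extracting a coherent family via compactness of the associated inverse system; setting $Q := \varprojlim T_M$ contradicts the maximality of $P$. Given this claim, for two $p$-Sylows $P, Q$ the sets $C_N := \{g \in U \mid g \pi_N(P) g^{-1} = \pi_N(Q)\}$ are non-empty (by the finite Sylow conjugacy theorem) closed subsets of the compact group $U$, and they form a filter basis; an element $g$ of their intersection satisfies $\pi_N(gPg^{-1}) = \pi_N(Q)$ for every $N \in \mathcal{N}$, hence $gPg^{-1} = Q$.

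The main obstacle is the ancillary projection claim in (2), since the preimage $\pi_M^{-1}(S)$ is generically not a $p$-group and one must exhibit the right compatibility of Sylows of these preimages as $M$ varies. This reduces to the finite-group fact that the intersection of a Sylow subgroup with a normal subgroup is a Sylow of that normal subgroup, applied to $M_2/M_1 \trianglelefteq U/M_1$ for nested $M_1 \subseteq M_2$ in $\mathcal{N}$; this order-count shows images of Sylows of preimages are Sylows of preimages, which makes the inverse system of candidate $T_M$'s well-defined and non-empty at each stage.
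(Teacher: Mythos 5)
Your proof is correct and is essentially the standard argument for the profinite Sylow theorems; the paper gives no proof of this proposition, simply citing \cite[2.2.2]{Wil98}, whose argument proceeds exactly as yours does -- Zorn's lemma (with the closure-of-a-chain verification) for existence and for (3), the inverse limit of the non-empty finite sets of compatible finite Sylow subgroups for the projection claim, and a filter-basis/compactness argument on the conjugating sets $C_N$ for (2). Note also that the paper defines a $p$-Sylow subgroup to be a \emph{maximal pro-$p$ subgroup}, which is precisely what your Zorn argument produces, so no translation between competing definitions (e.g.\ via supernatural indices) is required.
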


\begin{prop}[{\cite[Proposition 2.3.8]{RZ00}}]\label{prop:pro-nil} Suppose $U$ is a profinite group that is pro-nilpotent. Then $U\simeq \prod_{p\in \mb{P}} U_p$. In particular,
\begin{enumerate}[(1)]
\item~For each prime $p$, $U$ has a unique normal $p$-Sylow subgroup $U_p$.

\item~For primes $p\neq q$, $U_p\sleq C_U(U_q)$.
\end{enumerate}
\end{prop}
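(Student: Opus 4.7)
The plan is to reduce to the classical fact that a finite nilpotent group is the internal direct product of its Sylow subgroups and then pass to the inverse limit. Since $U$ is pro-nilpotent, it admits a basis at $1$ of open normal subgroups $(N_i)_{i\in I}$ such that each quotient $U/N_i$ is finite nilpotent, and $U=\varprojlim_{i\in I}U/N_i$.

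For each $i$, let $(U/N_i)_p$ denote the unique Sylow $p$-subgroup of the finite nilpotent group $U/N_i$, and write $U/N_i\simeq \prod_{p\in\mb{P}}(U/N_i)_p$. This decomposition is compatible with the inverse system: whenever $N_j\supseteq N_i$, the surjection $U/N_i\to U/N_j$ sends $(U/N_i)_p$ onto $(U/N_j)_p$, because images of Sylow $p$-subgroups under surjective maps of finite groups are Sylow $p$-subgroups. Setting $V_p:=\varprojlim_i (U/N_i)_p$, one obtains a closed pro-$p$ subgroup of $U$, and since inverse limits commute with finite products (and only finitely many $(U/N_i)_p$ are nontrivial at each finite stage), we get a topological group isomorphism $U\simeq \prod_{p\in \mb{P}}V_p$.

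Each $V_p$ is normal in $U$ because $(U/N_i)_p$ is characteristic in the finite nilpotent group $U/N_i$. To identify $V_p$ with the $p$-Sylow subgroup $U_p$, I would take any pro-$p$ subgroup $P\sleq U$ and observe that its image in each $U/N_i$ is a finite $p$-subgroup, hence contained in $(U/N_i)_p$; therefore $P\sleq V_p$. Combined with $V_p$ itself being pro-$p$, this shows $V_p=U_p$ is the unique $p$-Sylow subgroup, yielding (1).

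For (2), the direct-product decomposition $U\simeq \prod_{r\in\mb{P}}U_r$ immediately gives $[U_p,U_q]=\{1\}$ for $p\neq q$, so $U_p\sleq C_U(U_q)$. The only conceptual subtlety is the interchange of inverse limit with the product of Sylow factors, but since each finite stage is an honest direct product and the transition maps respect the Sylow decomposition coordinatewise, this reduces to a routine diagram chase; no genuine obstacle is expected.
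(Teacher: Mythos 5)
Your argument is correct and is essentially the standard proof of this fact (it is the argument given in the cited reference, Ribes--Zalesskii, Proposition 2.3.8); the paper itself quotes the proposition without proof. All the steps check out: the transition maps of a surjective inverse system of finite nilpotent groups respect the Sylow decompositions coordinatewise, the inverse limit commutes with the product over primes, and the containment of an arbitrary pro-$p$ subgroup in $V_p$ follows from uniqueness of Sylow $p$-subgroups at each finite level.
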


\indent The \textbf{Frattini subgroup} of $U$, denoted $\Phi(U)$, is the intersection of all maximal proper open subgroups of $U$. 

\begin{prop}[{\cite[Proposition 2.5.1]{Wil98}}]\label{prop:fratt}
Let $U$ be a profinite group. If $H\sleq U$ and $H\Phi(U)=U$, then $H=U$.
\end{prop}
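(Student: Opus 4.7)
The plan is to argue by contrapositive: I will show that any proper closed subgroup $H$ of $U$ is contained in some maximal proper open subgroup $M$, whence $H\Phi(U)\sleq M\lneq U$.

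First I would exploit the profinite structure. Let $(N_i)_{i\in\omega}$ be a normal basis at $1$ for $U$, so each $N_i\trianglelefteq_o U$ and $\bigcap_i N_i=\{1\}$. Since $H$ is closed, $H=\bigcap_{i\in\omega}HN_i$. Thus if $H\lneq U$, there must exist some $i$ with $HN_i\lneq U$; otherwise every $HN_i$ equals $U$ and the intersection is $U$, forcing $H=U$.

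Next I would pass to the finite quotient $U/N_i$. The image $HN_i/N_i$ is a proper subgroup of the finite group $U/N_i$, so it is contained in some maximal proper subgroup $M/N_i$ of $U/N_i$. Pulling back, $M$ is an open subgroup of $U$ containing $H$ and containing $N_i$. I claim $M$ is in fact a maximal proper open subgroup of $U$: if $M\sleq L\sleq_o U$ with $L$ open, then $L$ contains $N_i$ (since $M$ does), hence $L/N_i$ is a subgroup of $U/N_i$ containing the maximal $M/N_i$, so $L/N_i\in\{M/N_i,U/N_i\}$, giving $L\in\{M,U\}$.

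Finally, since $M$ is a maximal proper open subgroup of $U$, the definition of the Frattini subgroup gives $\Phi(U)\sleq M$. Combined with $H\sleq M$, we conclude $H\Phi(U)\sleq M\lneq U$, contradicting the hypothesis $H\Phi(U)=U$. The only step requiring any care is the passage from a proper closed subgroup to containment in a maximal proper \emph{open} subgroup; this is where profiniteness is used essentially, via the normal basis and the reduction to a finite quotient.
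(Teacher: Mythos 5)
Your proof is correct, and it is the standard argument (the one given in the cited reference, Wilson's \emph{Profinite Groups}); the paper itself states this proposition as a quoted result without proof. The key reduction — that a proper closed subgroup $H$ satisfies $H=\bigcap_i HN_i$ and hence lies inside some proper open subgroup $HN_i$, which in turn lies in a maximal proper open subgroup pulled back from a finite quotient — is exactly right, and the verification that the pullback $M$ is maximal is sound (indeed, since $M$ is open, any subgroup properly containing it is automatically open, so maximality among open subgroups and among all subgroups coincide here).
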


\begin{prop}[{\cite[Proposition 2.8.11]{RZ00}}]\label{prop:s_solvable}
If $U$ is pro-supersolvable and pro-$\pi$ for some finite set of primes $\pi$, then $U$ is topologically finitely generated if and only if $\Phi(U)$ is open. 
\end{prop}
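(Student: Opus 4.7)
The plan is to treat the two implications separately. For the reverse direction I would just invoke Proposition~\ref{prop:fratt}: if $\Phi(U)$ is open then $U/\Phi(U)$ is finite, so lifting finitely many generators of the quotient yields a topologically finitely generated closed subgroup $H\sleq U$ with $H\Phi(U)=U$; Proposition~\ref{prop:fratt} then forces $H=U$, so $U$ is topologically finitely generated. Note that this direction does not use either pro-supersolvability or the finiteness of $\pi$.

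For the forward direction I would first establish the key structural fact that every maximal open subgroup of $U$ has prime index lying in $\pi$. Given a maximal $M\sleq_o U$, the index $|U:M|$ is finite so $M$ contains some open normal $N\trianglelefteq U$, and the image of $M$ in the finite supersolvable group $U/N$ is still maximal. The classical theorem that every maximal subgroup of a finite supersolvable group has prime index then forces $|U:M|$ to be a prime in $\pi$. Since $\Phi(U)=\bigcap\{M : M\text{ maximal open in }U\}$, the diagonal map
\[
U/\Phi(U)\hookrightarrow \prod_{M\text{ maximal open}}U/M
\]
realises $U/\Phi(U)$ as a closed subgroup of a product of cyclic groups of prime order in $\pi$, so $U/\Phi(U)$ is pro-abelian and pro-$\pi$.

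Next I would apply Proposition~\ref{prop:pro-nil} to the abelian (hence pro-nilpotent) group $U/\Phi(U)$ to obtain a Sylow decomposition $U/\Phi(U)\simeq \prod_{p\in \pi}(U/\Phi(U))_p$. Each Sylow factor $(U/\Phi(U))_p$ sits inside a product of copies of $\Zb/p\Zb$ and is therefore an elementary abelian pro-$p$ group, that is, a topological $\mathbb{F}_p$-vector space. As a continuous quotient of the topologically finitely generated group $U$ it is itself topologically finitely generated, and an $\mathbb{F}_p$-vector space with finitely many topological generators is finite-dimensional, hence finite. Since $\pi$ is finite the product $U/\Phi(U)$ is then a finite group, so $\Phi(U)$ is open as required.

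The main obstacle I anticipate is the prime-index property of maximal open subgroups of a pro-supersolvable group; this is where supersolvability is genuinely used, via the classical theorem on finite supersolvable groups. After that input the remaining steps are routine: the Sylow decomposition of pro-nilpotent groups reduces everything to the pro-$p$ case, and the finiteness of $\pi$ turns a finite product of finite groups into a finite group.
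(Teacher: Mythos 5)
The paper quotes this proposition from Ribes--Zalesskii without proof, so your argument can only be judged on its own merits. Your reverse implication is correct: it is the standard lifting argument through Proposition~\ref{prop:fratt}, and you are right that it uses neither hypothesis on $U$.

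The forward implication contains a genuine gap. A maximal open subgroup $M$ of a pro-supersolvable group does have prime index in $\pi$, as you argue, but it need \emph{not} be normal; hence $U/M$ is only a coset space of size $p$, not a cyclic group, and your ``diagonal map'' into $\prod_M U/M$ is not a homomorphism into a product of groups. The conclusion you extract from it --- that $U/\Phi(U)$ is pro-abelian --- is false: $U=S_3$ is supersolvable and pro-$\{2,3\}$ with $\Phi(S_3)=\{1\}$, yet $S_3/\Phi(S_3)=S_3$ is non-abelian. This invalidates the subsequent appeal to Proposition~\ref{prop:pro-nil} and the elementary-abelian Sylow count. The argument can be repaired along the same lines: replace each $M$ by its normal core $M_U$, which is an intersection of conjugates of $M$ and hence again an intersection of maximal open subgroups, so that $\Phi(U)=\bigcap_M M_U$ and $U/\Phi(U)$ embeds as a closed subgroup of $\prod_M U/M_U$. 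Each $U/M_U$ acts faithfully and primitively on the $p$-point set $U/M$, and a primitive finite solvable group of prime degree $p$ embeds in $\mathrm{AGL}(1,p)$; hence $U/M_U$ is metabelian of exponent dividing $p(p-1)$. Since $\pi$ is finite, $V:=U/\Phi(U)$ is therefore a topologically finitely generated pro-metabelian group of exponent dividing $e:=\prod_{p\in\pi}p(p-1)$. Then $V/\overline{[V,V]}$ is a topologically finitely generated abelian profinite group of exponent dividing $e$, hence finite; so $\overline{[V,V]}$ is open, hence itself topologically finitely generated, abelian of exponent dividing $e$, hence finite; so $V$ is finite and $\Phi(U)$ is open. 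Note that supersolvability enters through the structure of the primitive quotients and the finiteness of $\pi$ through the bound on the exponent, which is where you expected the hypotheses to be used.
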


\indent The \emph{$\pi$-core} of a profinite group $U$, $O_{\pi}(U)$, is the closed subgroup generated by all subnormal pro-$\pi$ subgroups of $U$ where $\pi$ is a possibly infinite set of primes. In \cite[Lemma 2.4]{R13}, $O_{\pi}(U)$ is shown to be pro-$\pi$ and normal. Under certain assumptions on $U$, the $p'$-core behaves nicely where $p'$ denotes the collection of primes different from $p$.

\begin{thm}[Reid {\cite[Corollary 5.11]{R13}}]\label{thm:reidcore}
If $U$ is a profinite group such that $U$ has a topologically finitely generated $p$-Sylow subgroup, then $U/O_{p'}(U)$ is virtually pro-$p$.
\end{thm}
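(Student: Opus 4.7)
The plan is a reduction to a finite-group statement via inverse limits. First I would reduce to the case $O_{p'}(U) = \{1\}$: in the profinite setting, an extension of a pro-$p'$ group by a pro-$p'$ group is pro-$p'$, so the preimage in $U$ of any normal pro-$p'$ subgroup of $U/O_{p'}(U)$ is itself a normal pro-$p'$ subgroup of $U$, forcing $O_{p'}(U/O_{p'}(U)) = \{1\}$. The image of $U_p$ in this quotient remains a topologically $d$-generated Sylow $p$-subgroup. It then suffices to show: if $O_{p'}(U) = \{1\}$ and $U_p$ is topologically $d$-generated, then $O_p(U)$ is open in $U$, equivalently $U$ has an open pro-$p$ subgroup.

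Second, I would pass to the inverse system $U = \varprojlim_N U/N$ over open normal $N \trianglelefteq U$. Writing $G_N := U/N$ and $P_N := U_p N / N$, each $P_N$ is a Sylow $p$-subgroup of $G_N$ and the topological generators of $U_p$ descend to a \emph{compatible} family of $d$ generators for the $P_N$. One checks $O_{p'}(U) = \bigcap_N \pi_N^{-1}(O_{p'}(G_N))$: the image of $O_{p'}(U)$ in each $G_N$ is a normal $p'$-subgroup and so lies in $O_{p'}(G_N)$, while conversely the right-hand intersection is itself a normal pro-$p'$ subgroup of $U$. Hence triviality of $O_{p'}(U)$ propagates cofinally into the $G_N$. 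The target is then a uniform bound $[G_N : O_p(G_N) \cdot O_{p'}(G_N)] \sleq f$ for all sufficiently small $N$, which by inverse limit yields openness of $O_p(U)$.

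The third step is the finite-group content, and it is also where the main obstacle lies. A naive per-quotient claim ``$G$ finite with $d$-generated Sylow $p$-subgroup implies $[G : O_p(G) \cdot O_{p'}(G)] \sleq f(d,p)$'' is \emph{false}: taking $G = A_n$ with $p = 2$ yields $O_2(A_n) = O_{2'}(A_n) = \{1\}$ for $n \sgeq 5$, yet $|A_n|$ grows factorially while $d(P_n)$ grows only logarithmically in $n$. Any successful proof must therefore genuinely exploit the compatibility of generators across the inverse system, not merely the pointwise Sylow generator number. I would invoke CFSG-based generation bounds (of Guralnick--Lucchini type) constraining which non-abelian simple composition factors of $G_N$ are compatible with a $d$-generated Sylow $p$-subgroup; the closing argument is that cofinal accumulation of such factors in the inverse system would produce a nontrivial normal pro-$p'$ subgroup of the limit, contradicting $O_{p'}(U) = \{1\}$, and this rules out the $A_n$-type obstructions precisely in the profinite regime.
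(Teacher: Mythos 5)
The paper does not prove this statement --- it is quoted as a black box from Reid \cite[Corollary 5.11]{R13} --- so there is no internal proof to compare against, and I assess your argument on its own terms. Your first two steps are correct: the reduction to $O_{p'}(U)=\{1\}$, the identity $O_{p'}(U)=\bigcap_N\pi_N^{-1}\bigl(O_{p'}(U/N)\bigr)$, and the observation that a uniform bound $[G_N:O_p(G_N)O_{p'}(G_N)]\sleq f$ over a cofinal family of open normal $N$ would finish the proof: the preimages $\pi_N^{-1}(O_p(G_N)O_{p'}(G_N))$ form a decreasing family of open normal subgroups of bounded index, hence stabilise to an open normal $H$ with $H=O_p(H)\times O_{p'}(H)$ and $O_{p'}(H)\sleq O_{p'}(U)=\{1\}$. (One aside: ``triviality of $O_{p'}(U)$ propagates cofinally into the $G_N$'' is neither true nor needed --- $O_{p'}(G_N)$ may remain large; only the intersection of the preimages is trivial, which is why your target correctly retains $O_{p'}(G_N)$ in the index.)

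The gap is in the third step, which is where the entire content of the theorem lives, and your diagnosis of the obstruction is itself off target. The $A_n$ example does not refute the naive per-quotient claim: since $d$ of a Sylow $2$-subgroup of $A_n$ grows with $n$, fixing $d$ confines $n$ to a finite range, so no family with bounded $d$ and unbounded index is produced. The claim is nevertheless false, but for a different reason: for each fixed $p$ there are infinitely many nonabelian finite simple groups whose Sylow $p$-subgroup is cyclic of order $p$ --- for instance $PSL_2(q)$ for the infinitely many primes $q$ with $p$ exactly dividing $q-1$ --- and these have $d=1$, trivial $O_p$ and $O_{p'}$, and unbounded order. This also defeats your proposed repair: no ``CFSG-based generation bound constraining which simple composition factors are compatible with a $d$-generated Sylow $p$-subgroup'' can produce a finite list, because infinitely many simple groups are compatible with even a cyclic Sylow $p$-subgroup. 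Finally, the closing compactness argument is an assertion rather than a proof: composition factors of the finite quotients $G_N$ do not lift to normal subgroups of $U$, so ``cofinal accumulation of such factors produces a nontrivial normal pro-$p'$ subgroup of the limit'' has no supplied mechanism, and it is precisely at this point that Reid's argument does substantive work (controlling how nonabelian chief factors of order divisible by $p$ interact with the generation of a Sylow $p$-subgroup across the whole group, not quotient by quotient). As it stands the proposal reduces the theorem correctly to its hard finite/profinite core and then does not prove that core.
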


\indent Finitely generated $p$-Sylow subgroups have strong structural consequences by work of O.V. Melnikov; we include a proof via Theorem~\ref{thm:reidcore} for completeness.

\begin{thm}[{Melnikov, \cite{M96}}]\label{thm:melnikov} If $U$ is a profinite group that is pro-$\pi$ for some finite set of primes $\pi$ and for which every $p$-Sylow subgroup is topologically finitely generated, then $U$ is virtually pro-nilpotent.
\end{thm}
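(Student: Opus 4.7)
The plan is to iterate Reid's theorem (Theorem~\ref{thm:reidcore}) once per prime in $\pi$, take the intersection of the resulting open subgroups, and verify that this intersection embeds as a closed subgroup of a product of pro-$p_i$ groups for distinct primes.

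In detail, I would enumerate $\pi=\{p_1,\dots,p_n\}$ and, for each $i$, invoke Theorem~\ref{thm:reidcore} applied to the Sylow subgroup at $p_i$ to obtain an open subgroup $W_i\sleq U$ with $O_{p_i'}(U)\sleq W_i$ and $W_i/O_{p_i'}(U)$ pro-$p_i$. Set $W:=\bigcap_{i=1}^n W_i$; this is open in $U$ because $\pi$ is finite. The claim is that $W$ is pro-nilpotent, which suffices.

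To prove the claim, I would use the key observation that the $p_i'$-cores intersect trivially. Since $U$ is pro-$\pi$, each $O_{p_i'}(U)$ is pro-$(\pi\setminus\{p_i\})$, so any element in $\bigcap_{i=1}^n O_{p_i'}(U)$ generates a procyclic subgroup that is pro-$\sigma$ for $\sigma=\pi\cap\bigcap_i(\pi\setminus\{p_i\})=\emptyset$, hence trivial. Now consider the continuous homomorphism
\[
\varphi:W\rightarrow \prod_{i=1}^n W_i/O_{p_i'}(U)
\]
induced by the inclusions $W\sleq W_i$ followed by the quotient maps. Its kernel is $\bigcap_i\bigl(W\cap O_{p_i'}(U)\bigr)$, which lies in $\bigcap_i O_{p_i'}(U)=\{1\}$, so $\varphi$ is injective. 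As $W$ is compact and the target is Hausdorff, $\varphi(W)$ is a closed subgroup of the product.

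Finally, I would note that $\prod_{i=1}^n W_i/O_{p_i'}(U)$ is a finite direct product of pro-$p_i$ groups for pairwise distinct primes, hence pro-nilpotent by Proposition~\ref{prop:pro-nil}. A closed subgroup of a pro-nilpotent profinite group is pro-nilpotent (every finite continuous quotient is a subgroup of a finite nilpotent group, hence nilpotent), so $W\simeq \varphi(W)$ is pro-nilpotent, proving $U$ is virtually pro-nilpotent. The only real obstacle is the triviality of the intersection of the $p_i'$-cores, and this is exactly the point where both the finiteness of $\pi$ and the pro-$\pi$ hypothesis on $U$ are needed; the bulk of the technical content is absorbed into Theorem~\ref{thm:reidcore}.
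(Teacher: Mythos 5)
Your proposal is correct and is essentially the paper's own argument: your $W=\bigcap_{i=1}^n W_i$ is exactly the paper's $V=d^{-1}(L_1\times\cdots\times L_n)$ for the diagonal map $d:U\rightarrow\prod_{i=1}^n U/O_{p_i'}(U)$, and both proofs rest on the same two points, namely injectivity coming from $\bigcap_{i=1}^n O_{p_i'}(U)=\{1\}$ (which you justify in slightly more detail than the paper) and Theorem~\ref{thm:reidcore} supplying the open pro-$p_i$ subgroups of the quotients. No substantive difference.
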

\begin{proof}
Let $p_1,\dots,p_n$ list $\pi$ and form $O_{p'_1}(U),\dots,O_{p'_n}(U)$. Since $U$ is pro-$\pi$, 
\[
\bigcap_{i=1}^nO_{p'_i}(U)=\{1\},
\]
hence the diagonal map $d:U\rightarrow U/O_{p'_1}(U)\times \dots \times U/O_{p'_n}(U)$ is injective. In view of Theorem~\ref{thm:reidcore}, there is $L_i\sleq_o U/O_{p'_i}(U)$ so that $L_i$ is pro-$p_i$. The group 
\[
V:=d^{-1}(L_1\times \dots \times L_n)
\]
is thus an open subgroup of $U$, and $V\simeq d(V)\sleq L_1\times \dots \times L_n$. Since $L_1\times \dots \times L_n$ is pro-nilpotent, we conclude $V$ is also pro-nilpotent verifying the theorem.
\end{proof}

Melnikov's theorem implies a useful corollary.

\begin{cor}\label{cor:melnikov}
If $U$ is a finite rank profinite group that is pro-$\pi$ for some finite set of primes $\pi$, then $U$ is virtually pro-nilpotent.
\end{cor}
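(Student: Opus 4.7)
The plan is to deduce this directly from Melnikov's theorem (the preceding Theorem~\ref{thm:melnikov}). The hypotheses of Melnikov's theorem require $U$ to be pro-$\pi$ for a finite set of primes $\pi$ (given) and for every $p$-Sylow subgroup of $U$ to be topologically finitely generated; so the only thing to verify is the latter.

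For this, I would invoke the definition of finite rank. If $U$ has rank $r<\infty$, then \emph{every} closed subgroup of $U$ admits a dense $r$-generated subgroup. Since a $p$-Sylow subgroup $U_p$ is, in particular, a closed subgroup of $U$, it is topologically $r$-generated and in particular topologically finitely generated. This holds for every prime $p\in \pi$ (and trivially for $p\notin \pi$, as then $U_p=\{1\}$).

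Having verified both hypotheses, applying Theorem~\ref{thm:melnikov} yields that $U$ is virtually pro-nilpotent, which is the desired conclusion. There is no real obstacle here; the corollary is essentially the observation that the ``finite rank'' assumption is a uniform strengthening of ``every $p$-Sylow is topologically finitely generated'', so Melnikov's theorem applies verbatim.
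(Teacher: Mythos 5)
Your proposal is correct and matches the paper's intended argument: the paper states the corollary as an immediate consequence of Melnikov's theorem, and the only point to check is exactly the one you verify, namely that finite rank forces every $p$-Sylow subgroup (being a closed subgroup) to be topologically finitely generated.
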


\indent Associated to a profinite group $U$ is the group of continuous automorphisms of $U$, denoted $\Aut(U)$. There is a natural topology on $\Aut(U)$: For $K\trianglelefteq U$, put 
\[
A_U(K):=\left\{\phi\in \Aut(U)\mid\phi(g)g^{-1}\in K\text{ for all }g\in U\right\}.
\]
By declaring the sets $A_U(K)$ as $K$ varies over open normal subgroups of $U$ to be a basis at $1$, $\Aut(U)$ becomes a topological group. Following L. Ribes and P. Zalesskii \cite{RZ00}, we call this topology the \textbf{congruence subgroup topology} of $\Aut(U)$. In the case $U$ is topologically finitely generated, $\Aut(U)$ is a profinite group under the congruence subgroup topology \cite[Corollary 4.4.4]{RZ00}.

\subsection{Elementary groups}
Elementary groups play a central role in this work. The class of elementary groups is, intuitively, the class of all t.d.l.c.s.c. groups that can reasonably be built by hand from second countable profinite groups and countable discrete groups. Formally, 

\begin{defn}
The class of \textbf{elementary groups} is the smallest class $\Es$ of t.d.l.c.s.c. groups such that
\begin{enumerate}[(i)]

\item~$\Es$ contains all second countable profinite groups and countable discrete groups.

\item~$\Es$ is closed under taking group extensions of second countable profinite or countable discrete groups. I.e. if $G$ is a t.d.l.c.s.c. group and $H\trianglelefteq G$ is a closed normal subgroup with $H\in \Es$ and $G/H$ profinite or discrete, then $G\in \Es$.

\item~If $G$ is a t.d.l.c.s.c. group and $G=\bigcup_{i\in \omega}O_i$ where $(O_i)_{i\in \omega}$ is an $\subseteq$-increasing sequence of open subgroups of $G$ with $O_i\in\Es$ for each $i$, then $G\in\Es$. We say $\Es$ is \textbf{closed under countable increasing unions}.
\end{enumerate}
\end{defn}

The class of elementary groups is surprisingly robust supporting our intuition that $\Es$ is the class of groups ``built by hand".
\begin{thm}[{\cite[Theorem 3.18]{W_1_14}}]\label{thm:closure_main} $\Es$ enjoys the following permanence properties:
\begin{enumerate}[(1)]
\item~$\Es$ is closed under group extension. 

\item~If $G\in \Es$, $H$ is a t.d.l.c.s.c. group, and $\psi:H\rightarrow G$ is a continuous, injective homomorphism, then $H\in \Es$. In particular, $\Es$ is closed under taking closed subgroups.

\item~$\Es$ is closed under taking quotients by closed normal subgroups.

\item~If $G$ is a residually elementary t.d.l.c.s.c. group, then $G\in \Es$. In particular, $\Es$ is closed under inverse limits.

\item~$\Es$ is closed under quasi-products.

\item~$\Es$ is closed under local direct products.

\item~If $G$ is a t.d.l.c.s.c. group and there is $(C_i)_{i\in \omega}$ an $\subseteq$-increasing sequence of elementary subgroups of $G$ such that $N_G(C_i)$ is open for each $i$ and $\ol{\bigcup_{i\in \omega}C_i}=G$, then $G\in \Es$. 
\end{enumerate}
\end{thm}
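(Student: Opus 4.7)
The natural strategy is to equip $\Es$ with a \emph{construction rank} $\xi:\Es\to\mathrm{Ord}$ and prove each clause by transfinite induction on $\xi$. I would define $\Es_0$ to be the collection of second countable profinite groups and countable discrete groups, then, for each ordinal $\alpha$, let $\Es_{\alpha+1}$ consist of those t.d.l.c.s.c.\ groups that are either an extension of some member of $\Es_{\alpha}$ by a second countable profinite or countable discrete group, or a countable increasing union of open subgroups drawn from $\Es_{\alpha}$. At limit $\lambda$ take $\Es_\lambda=\bigcup_{\beta<\lambda}\Es_\beta$. Axioms (i)--(iii) in the definition of $\Es$ force $\Es=\bigcup_\alpha\Es_\alpha$, and $\xi(G)$ is the least $\alpha$ with $G\in\Es_\alpha$. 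With this scaffolding, clauses (1)--(3) reduce to careful bookkeeping: for (1), induct on $\xi(Q)$ in $1\to N\to G\to Q\to 1$, pulling back the extension or open covering witnessing $Q\in\Es$; for (2), induct on $\xi(G)$ and observe that a continuous injection into a profinite or discrete group forces $H$ itself to be profinite or countable discrete (using van Dantzig to produce a compact open in $H$, which injects as a compact subgroup of $G$), then pull back extensions and open covers through $\psi$; for (3), repeat the induction using Theorem (5.33) to identify $NK/K\simeq N/(N\cap K)$, which is elementary of strictly smaller rank by~(1) applied to the induction hypothesis.

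Clause (6) is essentially immediate: write $\bigoplus_{a\in A}(G_a,U_a)=\bigcup_{n\in\omega}S_n$, note $S_0=\prod_{i\in\omega}U_{a_i}$ is profinite, and each successive $S_{n+1}$ is an extension of $S_n$ by $G_{a_n}$, hence elementary by~(1); then invoke axiom (iii) on the increasing open union. Clause (5) is handled by choosing a compact open $U\leq G$, noting $U\cap N_j\in\Uca(N_j)$, and using the fact that the multiplication map $m:N_1\times\dots\times N_k\to G$ is continuous and injective with dense image: the closure of $m(N_1\times\dots\times N_k)$ is $G$, and writing a countable dense set $\{g_n\}$ of $G$ together with $U$ lets one build an increasing exhaustion by open subgroups of the form $\langle g_1,\ldots,g_n,U\rangle$, each of which is an extension (by~(1)) of the elementary group generated by the $N_j$-components inside of a compact neighbourhood. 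Clause (4) is handled similarly: given closed normal $(N_i)$ with $\bigcap N_i=\{1\}$ and each $G/N_i$ elementary, pick $U\in\Uca(G)$; then $U\hookrightarrow\prod_i(U N_i/N_i)$ is a continuous injection of $U$ into a profinite group (being compact, this is a homeomorphism onto the image), so $U$ is profinite, and each open compactly generated subgroup of $G$ is an extension of an elementary group by an elementary group via the natural projections to the $G/N_i$'s and (2); (iii) then delivers $G\in\Es$.

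For (7), the plan is to replace $C_i$ by the closure $D_i:=\ol{C_i}$, which is normal in the open subgroup $N_G(C_i)$ (closure preserves normality) and which embeds $C_i$ as a dense subgroup; by~(2) applied to the inclusion $C_i\hookrightarrow D_i$ is not usable directly (wrong direction), so instead one shows $D_i\in\Es$ by expressing it as the closure of a residually-elementary-in-itself increasing union and invoking~(4). One then writes $O_i:=N_G(C_i)$ and verifies that $O_i$ is an extension of $D_i$ by a t.d.l.c.s.c.\ group that one can continue to decompose; finally, because $\bigcup_i D_i$ is dense in $G$, a diagonalization produces a $\subseteq$-increasing sequence of elementary open subgroups exhausting $G$ up to closure, and an appeal to~(4) (residually elementary implies elementary) finishes the argument.

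The main obstacles are clauses (4) and (7). For (4), the subtlety is that the natural embedding $G\hookrightarrow\prod_i G/N_i$ lands in a group which is \emph{not} elementary in general, so one cannot simply quote~(2); one has to manipulate compact open subgroups and the second countability hypothesis to replace the product by a countable increasing union. Clause (7) is the most delicate because the approximating subgroups $C_i$ need not be closed and their closures require upgrading to elementary groups with open normalizers while staying inside $G$; this is exactly where the interplay between~(2), (3), and~(4) becomes essential, and where axiom (iii) combined with the openness of each $N_G(C_i)$ earns its keep.
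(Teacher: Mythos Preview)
This theorem is not proved in the present paper; it is quoted verbatim from \cite[Theorem~3.18]{W_1_14}, so there is no proof here to compare against. Your construction-rank framework is indeed the standard scaffolding used in that reference, and your treatment of (1)--(3) and (6) is essentially correct in outline. However, several of your remaining arguments have genuine gaps.

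For (4), your sentence ``each open compactly generated subgroup of $G$ is an extension of an elementary group by an elementary group via the natural projections to the $G/N_i$'s'' does not work: projecting a compactly generated open $O\leq G$ to a single $G/N_i$ yields an elementary quotient, but the kernel $O\cap N_i$ has no reason to be elementary. What one actually needs is to reduce to a \emph{countable} family $(N_i)$ (using second countability), set $M_n:=\bigcap_{i\leq n}N_i$, and observe that $G/M_n$ injects continuously into the finite product $\prod_{i\leq n}G/N_i$, hence is elementary by (1) and (2). One then has a decreasing chain $(M_n)$ with trivial intersection and each $G/M_n$ elementary; the remaining work is to pass from this to $G\in\Es$, which requires a further argument you have not supplied (and which is where the actual content of (4) lies).

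For (7), you have misread the hypotheses: by the paper's standing convention all subgroups are closed, so each $C_i$ is already closed and your discussion of $D_i:=\overline{C_i}$ and of ``residually-elementary-in-itself'' is beside the point. The actual difficulty is that the union $\bigcup_i C_i$ is only dense, not equal to $G$, and the $C_i$ need not be open; one must manufacture an increasing exhaustion of $G$ by \emph{open} elementary subgroups out of the data $(C_i, N_G(C_i))$. Your sketch does not do this. Likewise your argument for (5) is too vague to constitute a proof: the phrase ``extension of the elementary group generated by the $N_j$-components inside of a compact neighbourhood'' does not identify a normal subgroup or a quotient, and the density of $m(N_1\times\cdots\times N_k)$ alone does not let you invoke (1).
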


We make use of a strong sufficient condition to be elementary.
\begin{thm}[{\cite[Theorem 8.1]{W_1_14}}]\label{thm:locsolv_e}
If $G$ is a t.d.l.c.s.c. group and $G$ has an open solvable subgroup, then $G\in \Es$.
\end{thm}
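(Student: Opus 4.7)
The proof divides into two stages: first showing every closed solvable t.d.l.c.s.c.\ group is elementary, and then bootstrapping to the open case.

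For the first stage, let $L$ be a closed solvable t.d.l.c.s.c.\ group of (abstract) derived length $d$, and induct on $d$. When $d=1$, $L$ is abelian; van Dantzig's theorem gives a compact open $U\le L$ which is second countable profinite (hence elementary), while $L/U$ is countable discrete (hence also in $\Es$), so extension closure (Theorem~\ref{thm:closure_main}(1)) gives $L\in\Es$. For the inductive step, set $L':=\overline{[L,L]}$. Since $[L,L]=L^{(1)}$ has abstract derived length $d-1$ and $[\overline A,\overline B]\subseteq\overline{[A,B]}$, a routine induction shows $(L')^{(k)}\subseteq\overline{(L^{(1)})^{(k)}}$ for every $k$; taking $k=d-1$ gives $(L')^{(d-1)}\subseteq\overline{\{1\}}=\{1\}$, so $L'$ has derived length at most $d-1$. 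By induction $L'\in\Es$, the Hausdorff abelian quotient $L/L'$ lies in $\Es$ by the base case, and extension closure yields $L\in\Es$. Applied to the given open solvable $H\le G$, this gives $H\in\Es$.

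For the second stage, second countability lets us enumerate a countable dense subset $\{g_i\}_{i\ge 1}$ of $G$ and form $G_i:=\overline{\langle H,g_1,\dots,g_i\rangle}$. Each $G_i$ is open in $G$ (it contains $H$), and $\bigcup_i G_i$ is a dense clopen subgroup, hence equals $G$. By closure of $\Es$ under countable increasing unions of open subgroups (part (iii) of the definition), it suffices to show each $G_i\in\Es$; this reduces the problem to the case where $G$ is compactly generated over its open solvable subgroup.

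The main obstacle is this compactly-generated case, since having an open elementary subgroup is not by itself sufficient for elementarity (for example, $SL_2(\Qp)$ has profinite, hence elementary, compact open subgroups but is not itself elementary). To handle it I would form the normal core $N:=\bigcap_{g\in G}gHg^{-1}$, a closed normal subgroup of $G$ contained in $H$; since $N$ is solvable, $N\in\Es$ by the first stage. In $G/N$ the image $H/N$ is open solvable with trivial normal core, so $G/N$ acts continuously, faithfully, and transitively on the countable discrete coset space $(G/N)/(H/N)$. I would then run a secondary induction on the derived length of $H/N$, peeling off the closed commutator $\overline{[H/N,H/N]}$ via its own normal core in $G/N$---which is closed normal of strictly smaller derived length, hence elementary by the inductive hypothesis---and using the compact generation together with the faithfulness of the action to sustain the induction. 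Once $G/N\in\Es$ is established, extension closure combines $N\in\Es$ and $G/N\in\Es$ to conclude $G\in\Es$.
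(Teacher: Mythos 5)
This theorem is imported from \cite[Theorem 8.1]{W_1_14}; the present paper cites it rather than reproving it, so your proposal has to be judged as a from-scratch argument. Your first two stages are fine: closed solvable t.d.l.c.s.c.\ groups are elementary by induction on derived length (using that the closure of a solvable subgroup is solvable of the same derived length, plus van Dantzig and extension closure), and closure of $\Es$ under countable increasing unions of open subgroups reduces the problem to the case where $G$ is generated by $H$ together with finitely many elements.

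The gap is in the third stage, which is where all the content of the theorem lives. Your mechanism runs entirely through the normal core $N=\bigcap_{g\in G}gHg^{-1}$, but this core is trivial in precisely the cases the theorem is about: a hypothetical compactly generated, topologically simple, locally solvable group --- exactly what must be ruled out --- has $N=\{1\}$, and even elementary examples such as $\Qp\rtimes\grp{p}$ with its open subgroup of $p$-adic integers show the core is typically trivial. Once $N=\{1\}$, your secondary induction never starts: the normal core of $\ol{[H,H]}$ in $G$ is contained in the normal core of $H$ and is therefore also trivial, so nothing is ever ``peeled off'' and the derived length of the open subgroup never decreases. The base case of that induction --- $G$ compactly generated with an open \emph{abelian} subgroup of trivial core --- is already essentially the whole problem, and a faithful transitive action on a countable discrete coset space gives no leverage (non-elementary groups admit such actions in abundance). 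What is actually needed is structure-theoretic input of a different kind: for the abelian case one observes that a compact open abelian $U$ lies in $QZ(G)$, so the quasi-centre is open, and one must know that such (quasi-discrete) groups are elementary; for the inductive step one has to work with the derived series of a \emph{compact} open solvable subgroup together with the Caprace--Monod analysis of compactly generated t.d.l.c.\ groups (minimal closed normal subgroups, quasi-products of topologically simple factors) and the elementary radical/residual to exclude non-elementary ``chief factors''. None of this is supplied by, or follows from, the core-and-coset-action outline you give.
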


The permanence properties of the class of elementary groups give rise to two canonical normal subgroups in an arbitrary t.d.l.c.s.c. group. 
\begin{thm}[{\cite[Theorem 7.9]{W_1_14}}] Let $G$ be a t.d.l.c.s.c. group. Then
\begin{enumerate}[(1)]
\item There is a unique maximal closed normal subgroup $\Rad{\Es}(G)$ such that $\Rad{\Es}(G)$ is elementary. 
\item There is a unique minimal closed normal subgroup $\Res{\Es}(G)$ such that $G/\Res{\Es}(G)$ is elementary.
\end{enumerate}
\end{thm}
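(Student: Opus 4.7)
The plan is to construct $\Rad{\Es}(G)$ as the supremum of the family $\mathcal{F}$ of closed normal elementary subgroups of $G$, and $\Res{\Es}(G)$ as the intersection of the family $\mathcal{N}$ of closed normal subgroups $N$ with $G/N$ elementary. In both parts, the key technical task is reducing transfinite operations on the lattice of closed normal subgroups to countable ones; second countability of $G$ provides the requisite Lindel\"of-type principles, and the closure properties from Theorem~\ref{thm:closure_main} supply the existence.

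For (1), I would first show that $\mathcal{F}$ is closed under countable increasing suprema: if $(C_i)_{i \in \omega} \subseteq \mathcal{F}$ is $\subseteq$-increasing, then each $C_i$ is normal in $G$, so $N_G(C_i) = G$ is open, and Theorem~\ref{thm:closure_main}(7) gives $\overline{\bigcup_i C_i} \in \Es$. Since the Chabauty space of closed subgroups of a second countable locally compact group is itself second countable, every chain in $\mathcal{F}$ admits a countable cofinal subchain with the same supremum, and Zorn's lemma yields a maximal element $M \in \mathcal{F}$. To upgrade maximality to unique maximality, I would show that $\mathcal{F}$ is directed: for $M_1, M_2 \in \mathcal{F}$, the closed normal subgroup $\overline{M_1 M_2}$ fits into an extension $1 \to M_1 \to \overline{M_1 M_2} \to \overline{M_1 M_2}/M_1 \to 1$, and the quotient $\overline{M_1 M_2}/M_1$ is the closure in $G/M_1$ of the image of $M_2$ under the projection $\pi : G \to G/M_1$. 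Applying (7) to an exhaustion of $M_2$ by compactly generated open subgroups pushed into $G/M_1$ (and noting $N_{G/M_1}(\overline{\pi(M_2)}) = G/M_1$ since $M_2 \trianglelefteq G$) shows $\overline{M_1 M_2}/M_1 \in \Es$, and closure under extensions (Theorem~\ref{thm:closure_main}(1)) then gives $\overline{M_1 M_2} \in \mathcal{F}$. Directedness together with maximality of $M$ forces $M' \subseteq \overline{MM'} = M$ for every $M' \in \mathcal{F}$.

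For (2), the family $\mathcal{N}$ is closed under finite intersection: $G/(N_1 \cap N_2)$ embeds continuously into the elementary group $G/N_1 \times G/N_2$, so Theorem~\ref{thm:closure_main}(2) gives $G/(N_1 \cap N_2) \in \Es$. Set $R := \bigcap \mathcal{N}$; this is closed and normal. Lindel\"ofness of the second countable space $G \setminus R$ applied to the open cover $\{G \setminus N : N \in \mathcal{N}\}$ produces a countable subfamily $(N_i)_{i \in \omega} \subseteq \mathcal{N}$ with $\bigcap_i N_i = R$, and closing under finite intersection we may arrange $N_0 \supseteq N_1 \supseteq \cdots$. Then $G/R$ is residually elementary via the descending sequence $(N_i / R)_{i \in \omega}$ of closed normal subgroups with trivial intersection and elementary quotients, so Theorem~\ref{thm:closure_main}(4) gives $G/R \in \Es$. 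Minimality is immediate, since any $N \in \mathcal{N}$ contains $R$ by construction.

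I expect the main obstacle to be the directedness step in (1): the subgroup $\pi(M_2) \subseteq G/M_1$ need not be closed, so closure of $\Es$ under quotients cannot be applied directly to conclude $\overline{M_1 M_2}/M_1 \in \Es$. My approach handles this by exhibiting a dense increasing sequence of closed elementary subgroups in $\overline{\pi(M_2)}$ with open normalizers, coming from compactly generated open subgroups of $M_2$, and then invoking (7). Once this is established, the remaining ingredients—Zorn's lemma, Lindel\"of reduction, and residual elementariness—are standard given the closure theorem.
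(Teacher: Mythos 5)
This theorem is not proved in the paper at hand; it is imported wholesale from \cite[Theorem 7.9]{W_1_14}, so your proposal must be judged on its own. Part (2) is correct: closure of the family under finite intersections via the diagonal embedding into $G/N_1\times G/N_2$, the Lindel\"of reduction to a countable decreasing subfamily, and residual elementarity (Theorem~\ref{thm:closure_main}(4)) give exactly what is needed, and minimality is automatic. In part (1), the reduction of chains to countable increasing subchains by separability and the application of Theorem~\ref{thm:closure_main}(7) (legitimate since the members of a chain in $\mathcal{F}$ are normal, hence have open normalizers in the closure of the union) are also fine.

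The gap is in the directedness step, which is the actual mathematical content of part (1). You need $\overline{M_1M_2}/M_1\in\Es$, and you propose to get it from Theorem~\ref{thm:closure_main}(7) applied to the subgroups $C_i:=\overline{\pi(O_i)}$ with $O_i$ compactly generated open in $M_2$. But (7) requires each $C_i$ to be elementary and to have open normalizer in $\overline{\pi(M_2)}$, and neither holds for the reasons you yourself flag for $\pi(M_2)$: since $O_i$ is compactly generated but not compact, $O_iM_1$ need not be closed, so $\overline{\pi(O_i)}$ is only the closure of a \emph{dense continuous image} of the elementary group $O_i$ --- none of the clauses of Theorem~\ref{thm:closure_main} cover that situation (clause (2) goes in the opposite direction, clause (3) needs a closed image), and indeed $\Es$ is certainly not closed under dense images of continuous homomorphisms (every separable group contains a dense countable, hence elementary discrete, subgroup). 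Moreover $N_{\overline{\pi(M_2)}}(\overline{\pi(O_i)})$ contains only $\pi(N_{M_2}(O_i))$, which is dense in $C_i$ but not open in $\overline{\pi(M_2)}$, since $M_2$ need not be open in $\overline{M_1M_2}$. So your argument is circular: it assumes at the level of the $O_i$ exactly the statement (``closures of images of closed normal elementary subgroups under quotient maps are elementary'') that it is trying to prove for $M_2$. This statement is \cite[Lemma 3.11]{W_1_14} --- cited elsewhere in this very paper --- and its proof there proceeds by transfinite induction on the decomposition (construction) rank of the elementary group $H$, machinery that is not recoverable from the closure properties of Theorem~\ref{thm:closure_main} alone. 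To repair your proof you would need to import that lemma, or reprove it by such an induction.
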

We call $\Rad{\Es}(G)$ and $\Res{\Es}(G)$ the \textbf{elementary radical} and \textbf{elementary residual}, respectively. It is easy to verify $G/\Rad{\Es}(G)$ has trivial quasi-centre and has trivial locally elliptic radical. \par

\indent It can be the case $\Rad{\Es}(G)=\{1\}$ and $\Res{\Es}(G)=G$. We give a name to such groups: A t.d.l.c.s.c. group is \textbf{elementary-free} if it has no non-trivial elementary normal subgroups and no non-trivial elementary quotients. Elementary-free groups have a nice property.

\begin{thm}[{\cite[Corollary 9.12]{W_1_14}}]\label{thm:[A]semisimple}
For a t.d.l.c.s.c. group $G$, $G/\Rad{\Es}(G)$ has no non-trivial locally normal abelian subgroups. In particular, elementary-free t.d.l.c.s.c. groups have no non-trivial locally normal abelian subgroups.
\end{thm}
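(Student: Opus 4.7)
The plan is to work with $H := G/\Rad{\Es}(G)$ and show that $H$ admits no non-trivial locally normal abelian subgroup; the ``in particular'' clause is then immediate, since an elementary-free $G$ satisfies $\Rad{\Es}(G)=\{1\}$ and hence $H=G$. A first observation I would make is that $\Rad{\Es}(H)=\{1\}$: if $L\trianglelefteq H$ were a non-trivial closed normal elementary subgroup, then its preimage under the quotient map $\pi:G\to H$ would fit into a short exact sequence $\Rad{\Es}(G)\hookrightarrow\pi^{-1}(L)\twoheadrightarrow L$ with both ends elementary, so $\pi^{-1}(L)$ would itself be elementary by Theorem~\ref{thm:closure_main}(1); as $\pi^{-1}(L)$ is closed normal in $G$ and strictly contains $\Rad{\Es}(G)$, this contradicts the maximality of the elementary radical.

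Suppose, for contradiction, that $K\sleq H$ is a non-trivial locally normal abelian subgroup. I would then try to exhibit a non-trivial closed normal elementary subgroup of $H$, namely the normal closure $N:=\ol{\grp{K^H}}$. Because $H$ is second countable and $N_H(K)$ is open, the set $\{K_i\}_{i\in\omega}$ of $H$-conjugates of $K$ is countable; set $C_n:=\ol{\grp{K_0,\dots,K_n}}$. These form an increasing chain of closed subgroups with $\ol{\bigcup_n C_n}=N$, and each $C_n$ is normalized by the open subgroup $V_n:=\bigcap_{i\le n}N_H(K_i)$, so each $C_n$ has open normalizer in $N$. By Theorem~\ref{thm:closure_main}(7) it then suffices to prove every $C_n$ is elementary. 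The base case is immediate: $C_0=K$ is compact abelian, hence solvable, and so elementary by Theorem~\ref{thm:locsolv_e}.

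The main obstacle will be the inductive step, namely showing $C_{n+1}=\ol{\grp{C_n,K_{n+1}}}$ is elementary given $C_n\in\Es$. Because $\Es$ is not a priori closed under joins of two elementary subgroups, the argument must exploit the common-normalizer structure: any compact open $W\sleq V_{n+1}$ simultaneously normalizes both $C_n$ and $K_{n+1}$, so $WC_n$ is closed and fits into the extension $C_n\hookrightarrow WC_n\twoheadrightarrow W/(W\cap C_n)$, hence is elementary by Theorem~\ref{thm:closure_main}(1), while $WK_{n+1}$ is compact, hence elementary. My plan is to realise $C_{n+1}$ as a countable increasing union of closed subgroups that all share $W$ as a compact open normalizer, iterating the extension and countable-union closure properties of $\Es$ to conclude $C_{n+1}\in\Es$. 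Should this direct attack falter, a fallback route would leverage the already noted triviality of $QZ(H)$ and $\Rad{\mc{LE}}(H)$ via a dichotomy on whether the image of $N_H(K)\to\Aut(K)$ is finite: a finite image forces $C_{N_H(K)}(K)$ to be open and hence every $k\in K$ to lie in $QZ(H)=\{1\}$, collapsing $K$; an infinite image would then be attacked by refining $K$ through intersections of centralizers of elements of a well-chosen compact open subgroup of $N_H(K)$ until the finite-image case applies.
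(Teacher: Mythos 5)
The statement is not proved in this paper at all --- it is imported verbatim from \cite[Corollary 9.12]{W_1_14} --- so you are supplying an argument where the paper supplies only a citation. Your framework is sound as far as it goes: $\Rad{\Es}(H)=\{1\}$ for $H=G/\Rad{\Es}(G)$ is correct by maximality of the radical and extension-closure; the conjugates of $K$ are countable because $N_H(K)$ is open and $H$ is second countable; each $C_n$ has open normalizer in $N$; and Theorem~\ref{thm:closure_main}(7) would indeed finish the proof \emph{if} each $C_n$ were elementary. (The base case needs nothing: compact groups are elementary by definition.)

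The gap is exactly where you flag it, and neither of your proposed routes can close it as stated. No closure property of $\Es$ in Theorem~\ref{thm:closure_main} covers the topological join of two elementary subgroups, even when both are locally normal and share an open normalizer, and this failure is not an artifact of the bookkeeping: in $G=PSL_3(\Qp)$ a compact open subgroup $U$ and a conjugate $gUg^{-1}$ are both compact (hence elementary) and both open (hence locally normal, with $N_G(U)\cap N_G(gUg^{-1})$ open), yet for suitable $g$ the subgroup $\ol{\grp{U,gUg^{-1}}}$ is open and unbounded and therefore equals all of $PSL_3(\Qp)$, which is non-elementary. So knowing that $WC_n$ and $WK_{n+1}$ are separately elementary tells you nothing about $C_{n+1}$; any correct argument must use the abelianness of $K$ (and the mutual conjugacy of the $K_i$) in an essential way, and your sketch never does --- ``realising $C_{n+1}$ as a countable increasing union of closed subgroups sharing a compact open normalizer'' simply restates the problem. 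The fallback dichotomy on the image of $N_H(K)\to\Aut(K)$ also does not converge: in the infinite-image case you have no mechanism for producing a non-trivial elementary closed normal subgroup, and ``refining $K$ by intersecting centralizers'' has no termination argument. The proof in \cite{W_1_14} proceeds through different machinery (quasi-centralizers of locally normal subgroups together with the fact that locally solvable groups are elementary, Theorem~\ref{thm:locsolv_e}), not through joins of conjugates; as written, your proposal sets up a plausible reduction but does not prove the theorem.
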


\indent The elementary radical and residual may be used to produce a characteristic series. For a t.d.l.c.s.c. group $G$, the \textbf{ascending elementary series} is defined by $A_0:=\{1\}$, $A_1:=\Rad{\Es}(G)$, $A_2:=\pi^{-1}(\Res{\Es}(G/A_1))$ where $\pi:G\rightarrow G/A_1$ is the usual projection, and $A_3:=G$. The ascending elementary series gives a method of reducing to elementary-free groups.

\begin{thm}[{\cite[Theorem 7.17]{W_1_14}}]\label{thm:acs_E_series}
Let $G$ be a t.d.l.c.s.c. group. Then the ascending elementary series 
\[
\{1\}\sleq A_1\sleq A_2\sleq G
\]
is a series of characteristic subgroups with $A_1$ elementary, $A_2/A_1$ elementary-free, and $G/A_2$ elementary.
\end{thm}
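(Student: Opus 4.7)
The argument is formal and turns entirely on the universal properties of $\Rad{\Es}(G)$ and $\Res{\Es}(G)$ combined with the closure of $\Es$ under extensions. I would address the four assertions (characteristic, $A_1$ elementary, $G/A_2$ elementary, $A_2/A_1$ elementary-free) by peeling them off in that order.

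The first three are immediate. By definition $A_1 = \Rad{\Es}(G)$ is elementary, and being canonically attached to $G$, it is fixed by every topological automorphism of $G$, hence characteristic. Since $A_1$ is characteristic, any automorphism of $G$ descends to an automorphism of $G/A_1$; this descended map fixes the canonically defined subgroup $\Res{\Es}(G/A_1)$ setwise, so its preimage $A_2$ is characteristic in $G$. For the top quotient, $G/A_2 \cong (G/A_1)/\Res{\Es}(G/A_1)$ is elementary by the defining property of the elementary residual.

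The substantive step is showing $A_2/A_1$ is elementary-free. Write $H := G/A_1$ and $R := \Res{\Es}(H) = A_2/A_1$. For the radical, let $E := \Rad{\Es}(R)$. Since $E$ is characteristic in $R$ and $R$ is characteristic in $H$, $E$ is characteristic in $H$, so its preimage $\tilde E \trianglelefteq G$ is a closed normal subgroup containing $A_1$. Both $A_1$ and $\tilde E / A_1 \cong E$ lie in $\Es$, so closure under extension gives $\tilde E \in \Es$; maximality of $\Rad{\Es}(G)$ then forces $\tilde E = A_1$, whence $E = \{1\}$. For the residual, let $M := \Res{\Es}(R)$, which is characteristic in $R$, hence in $H$. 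By definition $R/M$ is elementary, and $H/R$ is elementary since $R = \Res{\Es}(H)$; extension closure yields $H/M$ elementary, and then minimality of $\Res{\Es}(H)$ forces $M \sgeq R$, so $M = R$.

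There is no substantive obstacle here; the only thing to watch is the bookkeeping ensuring that characteristic subgroups of $R$ pull back to characteristic (hence normal) subgroups of $G$. This hinges on $A_1$ being characteristic in $G$ and on the identification $R = A_2/A_1$ being automorphism-equivariant, both of which are immediate.
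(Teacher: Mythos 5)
Your proof is correct. The paper itself gives no proof of this statement (it is quoted from \cite[Theorem 7.17]{W_1_14}), and your argument is exactly the natural one from the ingredients the paper does state: the universal (maximality/minimality) properties of $\Rad{\Es}$ and $\Res{\Es}$ from Theorem 7.9 of \cite{W_1_14} together with closure of $\Es$ under extensions, with the characteristic-subgroup bookkeeping handled correctly.
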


\section{Locally pro-nilpotent t.d.l.c.s.c. groups}
Our investigations begin with a general discussion of locally pro-nilpotent t.d.l.c.s.c. groups.  

\subsection{Structure theorems}
We take as a convention that discrete groups are locally pro-$\pi$ for any finite set of primes $\pi$.

\begin{prop}\label{prop:loc_nil}
Suppose $G$ is a t.d.l.c.s.c. group that is locally pro-nilpotent. For each prime $p$, there is a closed characteristic subgroup $L_p$ so that 
\begin{enumerate}[(1)]
\item~$\;G/L_p$ is locally pro-$p'$.
\item~There is a countable increasing exhaustion $(H_i)_{i\in \omega}$ of $L_p$ by compactly generated open subgroups so that $H_i/\Rad{\mc{LE}}(H_i)$ is locally pro-$p$.
\end{enumerate}
\end{prop}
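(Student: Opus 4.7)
The plan is to define $L_p$ explicitly as the characteristic hull of a pro-$p$-Sylow in a pro-nilpotent compact open subgroup, and then analyse the resulting subgroup via the exhaustion of Corollary~\ref{cor:loc_nil_main2}. First, fix a pro-nilpotent compact open subgroup $U\leq G$ with Sylow decomposition $U\simeq\prod_q U_q$ (Proposition~\ref{prop:pro-nil}) and set
\[
L_p:=\cgrp{\alpha(U_p)\mid \alpha\in\Aut(G)}.
\]
Then $L_p$ is a closed characteristic subgroup by construction. A commensuration argument via Proposition~\ref{prop:pro-nil} applied to the intersection $U\cap U'$ of any two pro-nilpotent compact opens shows that $L_p$ does not depend on the choice of $U$: the pro-$p$-Sylows of $U$ and $U'$ both contain the pro-$p$-Sylow of $U\cap U'$ as a finite-index subgroup, and hence generate the same characteristic hull in $G$.

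For (1), since $U_p\leq L_p$, the image of $U$ in $G/L_p$ is a continuous quotient of $U/U_p\simeq\prod_{q\neq p}U_q$, a pro-$p'$ group; quotients of pro-$p'$ groups are pro-$p'$, and $UL_p/L_p$ is open in $G/L_p$ because $U$ is open in $G$, so $G/L_p$ is locally pro-$p'$. For (2), $L_p$ is itself a t.d.l.c.s.c.\ group (closed subgroup of $G$), so Corollary~\ref{cor:loc_nil_main2} provides an increasing exhaustion $(H_i)_{i\in\omega}$ of $L_p$ by compactly generated open subgroups with $H_i/\Rad{\mc{LE}}(H_i)$ locally pro-$\pi_i$ for some finite $\pi_i$. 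Since local pro-nilpotence is inherited by open subgroups and by continuous quotients, $H_i/\Rad{\mc{LE}}(H_i)$ is locally pro-nilpotent as well as locally pro-$\pi_i$; Proposition~\ref{prop:pro-nil} then presents its compact open subgroup as a finite direct product $\prod_{q\in\pi_i}V_q$ of pro-$q$-Sylows. The remaining step is to show $V_q$ is trivial for every $q\neq p$, i.e.\ $\pi_i\subseteq\{p\}$.

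This last step is where the containment $H_i\leq L_p$ must be exploited, and it is the main obstacle. For $q\neq p$, a non-trivial $V_q$ lifts to a compact pro-$q$ subgroup $C\leq H_i$ with non-trivial image modulo $\Rad{\mc{LE}}(H_i)$. The plan is to argue that the normal closure of $C$ in $H_i$ is in fact compact, and hence locally elliptic and absorbed into $\Rad{\mc{LE}}(H_i)$. The intuition is that $L_p$ is characteristically generated by pro-$p$ subgroups, and inside any pro-nilpotent compact open subgroup of $G$ the pro-$p$-Sylow meets a pro-$q$ subgroup trivially (Proposition~\ref{prop:pro-nil}), so conjugating $C$ by the characteristic generators $\alpha(U_p)$ of $L_p$ cannot enlarge it beyond a commensurable compact subgroup. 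Turning this intuition into a proof—reconciling the global characteristic-hull description of $L_p$ with the local product decomposition of pro-nilpotent compact open subgroups of $G$, and carefully tracking the action of $H_i$ on $C$ through these decompositions—is the technical heart of the argument.
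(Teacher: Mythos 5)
Your definition of $L_p$ as the characteristic hull $\cgrp{\alpha(U_p)\mid\alpha\in\Aut(G)}$ is genuinely different from the paper's, and it shifts all the difficulty onto part (2), which you do not prove. You correctly observe that the whole content is to show $\pi_i\subseteq\{p\}$, i.e. that for $q\neq p$ the $q$-Sylow $V_q$ of a pro-nilpotent compact open subgroup of $H_i$ lands in $\Rad{\mc{LE}}(H_i)$, and you propose to do this by showing the normal closure of $V_q$ in $H_i$ is compact. But the intuition offered does not close this: the fact that $U_p$ centralizes $U_q$ (Proposition~\ref{prop:pro-nil}) is a statement \emph{internal to one} pro-nilpotent compact open subgroup, whereas $H_i$ is generated by $U\cap L_p$ together with limits of words in elements of the various $\alpha(U_p)$, and such elements need not normalize, centralize, or even commensurate $V_q$ in any uniform way. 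Whether a compactly generated group normalizes a compact subgroup up to a compact normal closure is exactly the kind of property that fails in general t.d.l.c. groups, so this step cannot be waved through; as written the proof is incomplete at precisely the point the proposition is asserting something. (A smaller issue: your independence-of-$U$ argument only shows $U_p\sim_c U'_p$, and commensurable compact subgroups need not generate the same characteristic hull; fortunately independence is not needed for the statement.)

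The paper avoids this obstacle by building the normalization property into the \emph{definition} of $L_p$: it takes the invariant hereditary family $\mc{D}$ of closed subgroups $C$ such that for every $V\in\Uca(G)$ there is a pro-nilpotent $W\sleq_o V$ with $C\sleq\bigcap_{q\in\mb{P}\setminus\{p\}}N_G(W_q)$, and sets $L_p:=\ol{N_{\mc{D}}}$, the closure of the $\mc{D}$-core. Then the compactly generated groups $H_j=\grp{U\cap L_p,n_0,\dots,n_j}$ built from a dense subset of $N_{\mc{D}}$ lie in $\mc{D}$ by construction, so each $H_j$ normalizes $W_q$ for some pro-nilpotent $W\sleq_o U$ and all $q\neq p$; hence $H_j\cap W_q$ is a compact normal, therefore locally elliptic, subgroup of $H_j$, and $(H_j\cap W)_q\sleq H_j\cap W_q\sleq\Rad{\mc{LE}}(H_j)$ gives (2) immediately. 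The cost is that (1) now needs an argument (that $U_p\sleq N_{\mc{D}}$, via the centralizing property of Sylow factors), but that argument is short. If you want to salvage your approach, you would either need to prove that your $L_p$ coincides with (or sits suitably inside an open subgroup of) the paper's, or supply the missing compactness-of-normal-closure argument — neither of which is routine. I recommend adopting the synthetic-subgroup ($\mc{D}$-core) construction.
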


\begin{proof}
If $G$ is already locally pro-$p'$, then $L_p:=\{1\}$ satisfies the theorem. Suppose $G$ is not locally pro-$p'$ and consider 
\[
\mc{D}:=\left\{C\in S(G)\mid \forall\; V\in \Uca(G)\;\exists\text{ pro-nilpotent}\; W\sleq_oV:C\sleq\bigcap_{q\in \mb{P}\setminus\{p\} } N_G\left(W_{q}\right)\right\}
\] 
where $\mb{P}$ denotes the set of primes. It is easy to check $\mc{D}$ is invariant and hereditary. We may thus form $N_{\mc{D}}$, the $\mc{D}$-core. Set $L_p:=\ol{N_{\mc{D}}}$ and note $L_p$ is a closed characteristic subgroup. \par

\indent Fix $U\in \Uca(G)$ pro-nilpotent. Letting $(n_j)_{j\in \omega}$ be a countable dense subset of $N_{\mc{D}}$, form the subgroups 
\[
H_j:=\grp{U\cap L,n_0,\dots,n_j}.
\]
Certainly, $(H_j)_{j\in \omega}$ is an $\subseteq$-increasing sequence of compactly generated open subgroups of $L_p$ that exhausts $L_p$. By construction of $H_j$, we infer that $H_j\in \mc{D}$, so there is a pro-nilpotent $W\sleq_oU$ such that $H_j\sleq \bigcap_{q\in \mb{P}\setminus\{p\} } N_G\left(W_{q}\right)$. Therefore, $H_j\cap W_{q}\sleq \Rad{\mc{LE}}(H_j)$ for each $q\in \mb{P}\setminus \{p\}$. Furthermore, by the uniqueness of $W_{q}$ in $W$, we see 
\[
\left(H_j\cap W\right)_{q}\sleq H_j\cap W_{q}
\]
and conclude $H_j/\Rad{\mc{LE}}(H_j)$ is locally pro-$p$. We have thus verified $(2)$.\par

\indent For $(1)$, fixing $C\in \mc{D}$, $u\in U_{p}$, and $V\in \Uca(G)$, there is $W\sleq_oU\cap V$ such that $C\sleq \bigcap_{q\in \mb{P}\setminus\{p\} } N_G\left(W_{q}\right)$. Since $W_{q}\sleq U_{q}$, the element $u$ centralizes $W_{q}$  via Proposition~\rm\ref{prop:pro-nil}. Hence, 
\[
\cgrp{u,C}\sleq \bigcap_{q\in \mb{P}\setminus\{p\} } N_G\left(W_{q}\right).
\]
It follows $u\in N_{\mc{D}}$, and we conclude $U_{p}\sleq N_{\mc{D}}$. The group $G/L_p$ is therefore locally pro-$p'$.
\end{proof}

Proposition~\ref{prop:loc_nil} is the essential tool for proving a surprising decomposition result. 

\begin{thm}\label{thm:loc_nil_main}
Suppose $G$ is a t.d.l.c.s.c. group that is locally pro-nilpotent t.d.l.c.s.c. group. Then either $G$ is elementary or the ascending elementary series
\[
\{1\}\sleq A_1\sleq A_2\sleq G
\] 
is so that
\begin{enumerate}[(1)]
\item~$A_1$ and $G/A_2$ are elementary; and

\item~there is a possibly infinite set of primes $\pi$ so that $A_2/A_1\simeq \bigoplus_{p\in\pi }(L_p,U_p)$ where $L_p$ is a locally pro-$p$ t.d.l.c.s.c. group and $U_p\in \Uca(L_p)$ is pro-$p$.
\end{enumerate}
\end{thm}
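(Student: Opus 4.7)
The plan is to apply the ascending elementary series to reduce to the elementary-free middle factor, invoke Proposition~\ref{prop:loc_nil} to extract characteristic subgroups $M_p$ for each relevant prime $p$, and assemble them into a local direct product. By Theorem~\ref{thm:acs_E_series}, there is a series $\{1\}\sleq A_1\sleq A_2\sleq G$ with $A_1$ and $G/A_2$ elementary and $A_2/A_1$ elementary-free. If $A_2=A_1$, then $G$ is an extension of two elementary groups, hence $G\in\Es$ by Theorem~\ref{thm:closure_main}(1); otherwise (1) is immediate. Set $H:=A_2/A_1$. Since locally pro-nilpotent passes to closed subgroups and continuous quotients, $H$ is a locally pro-nilpotent, elementary-free t.d.l.c.s.c.\ group.

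Fix a pro-nilpotent $U\in\Uca(H)$ and decompose $U\simeq\prod_{p\in\mb{P}}U_p$ via Proposition~\ref{prop:pro-nil}; set $\pi:=\{p\in\mb{P}:U_p\neq\{1\}\}$. For each $p\in\pi$, apply Proposition~\ref{prop:loc_nil} to $H$ to produce a closed characteristic $M_p\sleq H$ with $H/M_p$ locally pro-$p'$ and $U_p\sleq M_p$. The key technical claim is $U\cap M_p=U_p$, so that $M_p$ is locally pro-$p$. Since $U\cap M_p$ is closed and normal in the pro-nilpotent $U$, it decomposes as $\prod_r(U\cap M_p)_r$ with each $(U\cap M_p)_r\sleq U_r$, and $(U\cap M_p)_p=U_p$; it suffices to show $K_q:=(U\cap M_p)_q=\{1\}$ for every $q\neq p$. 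The pro-$q$ group $K_q$ is locally normal in $H$ (normal in the open $U$). Using the exhaustion $M_p=\bigcup_iH_i^p$ with locally pro-$p$ quotients $H_i^p/\Rad{\mc{LE}}(H_i^p)$ and the non-perfectness of non-trivial pro-$q$ groups, the plan is to extract from $K_q\neq\{1\}$ a non-trivial closed characteristic abelian subgroup of $K_q$, which is then a non-trivial locally normal abelian subgroup of $H$ and contradicts Theorem~\ref{thm:[A]semisimple}. Hence $K_q=\{1\}$ and $M_p$ is locally pro-$p$ with $U_p\in\Uca(M_p)$.

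For distinct $p,q\in\pi$, the closed characteristic $M_p\cap M_q$ has compact open $U\cap M_p\cap M_q=U_p\cap U_q=\{1\}$, so is discrete, hence countable discrete, hence elementary, and thus $M_p\cap M_q\sleq\Rad{\Es}(H)=\{1\}$. Normality of $M_p,M_q$ then yields $[M_p,M_q]\sleq M_p\cap M_q=\{1\}$, and the same argument yields $Z(M_p)=\{1\}$. Define $m:\bigoplus_{p\in\pi}(M_p,U_p)\to H$ by multiplication; pairwise commutation makes $m$ a continuous homomorphism restricting to the natural identification of the compact open $\prod_pU_p$ with $U$. Any element of $M_{p_0}\cap\cgrp{M_p:p\neq p_0}$ is centralized by $\cgrp{M_p:p\neq p_0}$ and so lies in $Z(M_{p_0})=\{1\}$, giving injectivity. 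The image of $m$ contains $U$, hence is open; its closure equals the characteristic subgroup $\cgrp{M_p:p\in\pi}$, and the discrete (hence elementary) quotient $H/\cgrp{M_p:p\in\pi}$ forces $\cgrp{M_p:p\in\pi}=H$ via $\Res{\Es}(H)=H$. Since open subgroups are closed, the image equals $H$, and the open mapping theorem for t.d.l.c.s.c.\ groups upgrades $m$ to a topological isomorphism, proving (2). The main obstacle is the triviality of $K_q$: producing a non-trivial closed characteristic \emph{abelian subgroup} (rather than merely an abelian subquotient) from a general pro-$q$ group is delicate and requires careful interplay with the locally pro-$p$ quotient structure supplied by Proposition~\ref{prop:loc_nil}.
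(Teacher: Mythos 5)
Your overall architecture (reduce to the elementary-free section $H=A_2/A_1$, build the characteristic subgroups $M_p$ from Proposition~\ref{prop:loc_nil}, show they pairwise intersect trivially and commute, and assemble the multiplication map $m:\bigoplus_{p\in\pi}(M_p,U_p)\to H$ with openness of the image, surjectivity via $\Res{\Es}(H)=H$, and injectivity via centrality plus elementary-freeness) matches the paper's proof, and those parts are sound. The problem is the step you yourself flag as the ``main obstacle'': establishing $U\cap M_p=U_p$. Your plan is to show $K_q:=(U\cap M_p)_q=\{1\}$ by extracting from a non-trivial $K_q$ a non-trivial closed \emph{characteristic abelian} subgroup and invoking Theorem~\ref{thm:[A]semisimple}. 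This extraction does not exist in general: an infinite pro-$q$ group can be centerless and have no non-trivial closed characteristic (or even $U$-invariant) abelian subgroup --- for instance the infinitely iterated wreath product of copies of $C_q$ acting on the rooted $q$-ary tree, which is pro-$q$, centerless, and whose closed normal subgroups are far from abelian. A procyclic subgroup of $K_q$ is abelian but need not be locally normal in $H$. The vague appeal to ``interplay with the locally pro-$p$ quotient structure'' does not rescue this: Proposition~\ref{prop:loc_nil} only controls $M_p$ \emph{modulo locally elliptic radicals}, and the compact group $K_q$ can sit inside $\Rad{\mc{LE}}$ of the compactly generated pieces without contradiction. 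So as written the key claim is unproved, and the proposed route to it would fail.

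The paper resolves this by reversing your logical order. It first proves $M_p\cap M_q=\{1\}$ for $p\neq q$ directly, \emph{without} knowing $U\cap M_p=U_p$: any compactly generated subgroup $H'$ of $M_p\cap M_q$ has, by Proposition~\ref{prop:loc_nil}(2), a quotient $H'/\Rad{\mc{LE}}(H')$ that is simultaneously locally pro-$p$ and locally pro-$q$, hence discrete; since $\Rad{\mc{LE}}(H')$ is elementary, $H'$ is elementary, so $M_p\cap M_q$ is a countable increasing union of elementary groups and is therefore elementary, hence trivial by elementary-freeness. Only then does it deduce $U\cap M_p=U_p$: a non-trivial $q$-Sylow subgroup of $U\cap M_p$ would lie in the unique $q$-Sylow subgroup $U_q$ of the pro-nilpotent $U$, and $U_q\sleq M_q$, contradicting $M_p\cap M_q=\{1\}$. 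If you adopt this order, note also that your derivation of $M_p\cap M_q=\{1\}$ from discreteness (``discrete, hence countable discrete, hence elementary'') becomes circular, since it presupposes $U\cap M_p=U_p$; replace it with the locally-elliptic-radical argument above. With that substitution the rest of your proof goes through.
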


\begin{proof}
Suppose $G$ is non-elementary. By passing to $A_2/A_1$, we may assume $G$ is elementary free. Fix $U\in \mathcal{U}(G)$ pro-nilpotent, let $\pi$ list the primes $p$ so that $U$ has a non-trivial $p$-Sylow subgroup, and form the closed characteristic subgroups $L_p$ as given by Proposition~\ref{prop:loc_nil} for each $p\in \pi$.\par

\indent For $p\neq q$ primes from $\pi$, the group $L_p\cap L_q$ is a closed normal subgroup of $G$. Taking $H\sleq L_p\cap L_q$ a compactly generated subgroup, Proposition~\ref{prop:loc_nil} implies $H/\Rad{\mc{LE}}(H)$ is both locally pro-$p$ and locally pro-$q$. The group $H/\Rad{\mc{LE}}(H)$ is therefore discrete, and since $\Rad{\mc{LE}}(H)$ is elementary, the group $H$ is elementary. We conclude $L_p\cap L_q$ is a countable increasing union of elementary subgroups and therefore, is elementary. Since $G$ is elementary-free, it must be the case that $L_p\cap L_q=\{1\}$. \par

\indent In view of the proof of Proposition~\ref{prop:loc_nil}, the unique $p$-Sylow subgroup $U_p$ of $U$ is a subgroup of $L_p$. Hence, $U_p\sleq U\cap L_p$. On the other hand, if $U\cap L_p$ has a non-trivial $q$-Sylow subgroup for $q\neq p$, then the uniqueness of $U_q$ implies $L_p\cap U_q$ is non-trivial contradicting that $L_p\cap L_q$ is trivial. Therefore, $U_p=U\cap L$, and $U_p$ is a compact open subgroup of $L_p$ that is pro-$p$. In particular, $L_p$ is a locally pro-$p$ subgroup of $G$. \par

\indent We now form the t.d.l.c.s.c. group $\bigoplus_{p\in \pi}(L_p,U_p)$. In view of Proposition~\ref{prop:pro-nil}, we may identify $\prod_{p\in \pi}U_p$ with $U$, and this allows us to define $\psi:\bigoplus_{p\in \pi}(L_p,U_p)\rightarrow G$ by 
\[
(l_{p_0},\dots,l_{p_n},u_{p_{n+1}},u_{p_{n+2}},\dots)\mapsto l_{p_0}\dots l_{p_n}\cdot (1,\dots,1,u_{p_{n+1}},u_{p_{n+2}},\dots).
\]
Since $L_p$ centralizes $L_q$ for $p\neq q$ and $U=\prod_{p\in \pi}U_p$, the map $\psi$ is indeed a continuous homomorphism with an open image. The image of $\psi$ equals $\cgrp{L_p\mid p\in \pi}$, so the image is also normal. Since $G$ is elementary-free, we conclude $\psi$ is surjective. \par

\indent We now argue $\psi$ is injective. Suppose for contradiction $(l_{p_0},\dots,l_{p_n},u_{p_{n+1}},\dots)\mapsto 1$ is non-trivial. It follows $M:=\cgrp{L_{p_0}\dots L_{p_n}}\cap \prod_{m>n}U_{p_m}$ is non-trivial. The group $M$ is contained in $\prod_{m>n}U_{p_m}$, hence there is some prime $p_k$ with $k>m$ so that the $p_k$-Sylow subgroup of $M$ is non-trivial. Since the $p_k$-Sylow subgroup, $U_{p_k}$, of $U$ is unique, it follows $M\cap L_{p_k}$ contains the $p_k$-Sylow subgroup of $M$ and, in particular, is non-trivial. \par

\indent On the other hand, $L_{p_k}$ commutes with $\cgrp{L_{p_0}\dots L_{p_n}}$, so $M\cap L_{p_k}$ is a central subgroup of $L_{p_k}$. As $ \Rad{\Es}(L_{p_k})$ contains the centre of $L_{p_k}$, we conclude $\Rad{\Es}(L_{p_k})$ is non-trivial. The subgroup $\Rad{\Es}(L_{p_k})$, however, is a characteristic elementary subgroup of $L_{p_k}$, so $\Rad{\Es}(L_{p_k})$ is a non-trivial elementary normal subgroup of $G$. This contradicts that $G$ is elementary-free. Thus, $\psi$ is injective. \par

\indent We now conclude $\bigoplus_{p\in \pi}(L_p,U_p)\simeq G$ verifying the theorem.
\end{proof}
We remark that when $\pi$ is finite in Theorem~\ref{thm:loc_nil_main}, the local direct product is a direct product.

\begin{cor}\label{cor:loc_nil_simple}
A topologically simple locally pro-nilpotent t.d.l.c.s.c. group $G$ is either elementary or locally pro-$p$ for some prime $p$. 
\end{cor}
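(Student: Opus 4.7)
The plan is to derive this corollary as an essentially formal consequence of Theorem~\ref{thm:loc_nil_main}, using topological simplicity to collapse both the ascending elementary series and the local direct product decomposition down to a single factor.

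\textbf{Step 1: invoke the structure theorem.} If $G$ is elementary, there is nothing to prove, so assume $G$ is non-elementary. Then Theorem~\ref{thm:loc_nil_main} furnishes closed characteristic subgroups $\{1\} \sleq A_1 \sleq A_2 \sleq G$ with $A_1$ and $G/A_2$ elementary, together with an isomorphism $A_2/A_1 \simeq \bigoplus_{p\in\pi}(L_p,U_p)$.

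\textbf{Step 2: collapse the series.} Since $A_1$ and $A_2$ are closed characteristic (in particular, closed normal) subgroups, topological simplicity forces each to equal either $\{1\}$ or $G$. The case $A_1 = G$ is ruled out because it would make $G$ elementary, contradicting our assumption; the case $A_2 = \{1\}$ is ruled out because it would make $G = G/A_2$ elementary. Hence $A_1 = \{1\}$ and $A_2 = G$, so $G \simeq \bigoplus_{p\in\pi}(L_p,U_p)$.

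\textbf{Step 3: reduce to a single prime.} Inspecting the construction in the proof of Theorem~\ref{thm:loc_nil_main}, each $L_p$ is realized as a closed characteristic subgroup of $G$, is non-trivial (because $U_p$ is a non-trivial pro-$p$ compact open subgroup witnessing $p \in \pi$), and satisfies $L_p \cap L_q = \{1\}$ for distinct $p,q \in \pi$. Topological simplicity applied to each $L_p$ forces $L_p = G$. The key point to check, and the only place one must be the least bit careful, is that $|\pi| > 1$ is impossible: two distinct primes $p,q \in \pi$ would yield $L_p = L_q = G$ while simultaneously $L_p \cap L_q = \{1\}$, which is absurd. Therefore $\pi = \{p\}$ for a single prime, $G = L_p$, and $U_p$ is a pro-$p$ compact open subgroup, so $G$ is locally pro-$p$.

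No serious obstacle arises: the entire argument is the observation that topological simplicity eliminates every proper non-trivial characteristic subgroup, so the only surviving layer of the decomposition is a single locally pro-$p$ factor.
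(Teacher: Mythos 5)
Your proof is correct and is essentially the argument the paper intends: the corollary is stated as an immediate consequence of Theorem~\ref{thm:loc_nil_main}, obtained exactly by using topological simplicity to collapse the ascending elementary series and then to rule out more than one factor in the local direct product (each $L_p$ being a non-trivial closed normal subgroup, two distinct ones cannot both equal $G$ while intersecting trivially). No gaps.
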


The previous corollary implies an interesting theorem of Y. Barnea, M. Ershov, and T. Weigel.

\begin{cor}[Barnea, Ershov, Weigel {\cite[Corollary 4.10]{BEW11}}]\label{cor:BEW}
Suppose $G$ is a t.d.l.c. group that is non-discrete, compactly generated, and topologically simple. If $G$ is locally pro-nilpotent, then $G$ is locally pro-$p$ for some prime $p$.
\end{cor}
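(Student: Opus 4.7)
The plan is to apply Corollary~\ref{cor:loc_nil_simple} after first reducing to the second countable case, and then to eliminate the ``elementary'' branch of the resulting dichotomy.

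First I would pass to second countability. Since $G$ is compactly generated t.d.l.c., the standard fact \cite[(8.7)]{HR79} (cf.\ Remark~1.2) yields a compact normal subgroup $K\trianglelefteq G$ with $G/K$ second countable. Topological simplicity forces $K\in\{\{1\},G\}$. If $K=G$, then $G$ is itself a topologically simple profinite group; but in a profinite group the open normal subgroups form a basis at $1$, and each such subgroup must equal $\{1\}$ or $G$, so $\{1\}$ must be open and $G$ must be a finite discrete simple group, contradicting non-discreteness. Hence $K=\{1\}$ and $G$ is a t.d.l.c.s.c. group.

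Now I would apply Corollary~\ref{cor:loc_nil_simple} directly to $G$, which yields the dichotomy that $G$ is either elementary or locally pro-$p$ for some prime $p$. It therefore remains only to show that the elementary alternative cannot occur.

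The main obstacle is ruling out that $G\in\Es$. Here I would appeal to the decomposition-rank machinery for elementary groups from \cite{W_1_14}: a topologically simple group in $\Es$ is forced by the simplicity constraint to have decomposition rank one, and hence must be profinite or countable discrete; the profinite case was already ruled out in the first step above. Since $G$ is non-discrete by hypothesis, the elementary alternative is impossible, and the dichotomy from Corollary~\ref{cor:loc_nil_simple} yields that $G$ is locally pro-$p$, as required.
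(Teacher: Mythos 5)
Your proposal is correct and takes essentially the same route as the paper: reduce to the second countable case via \cite[(8.7)]{HR79}, observe that a non-discrete compactly generated topologically simple t.d.l.c.s.c. group cannot be elementary, and apply Corollary~\ref{cor:loc_nil_simple}. The only caveat is that your sketch of the non-elementarity step is imprecise -- the paper simply cites \cite[Proposition 6.3]{W_1_14}, whose actual proof uses that $\xi(G)=\xi(\mathrm{Res}(G))+1$ for compactly generated elementary $G$ while simplicity and non-discreteness force $\mathrm{Res}(G)=G$, rather than the claim that simple elementary groups have decomposition rank one -- so you should cite that proposition directly instead of the rank-one heuristic.
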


\begin{proof}
By \cite[(8.7)]{HR79}, $G$ is second countable, and by \cite[Proposition 6.3]{W_1_14}, $G$ is non-elementary. In view of Corollary~\ref{cor:loc_nil_simple}, we conclude $G$ is locally pro-$p$ for some prime $p$.
\end{proof}

\subsection{An example} 
Fixing $p\neq q$ primes and letting $\widehat{\Zb}_q$ be the $q$-adic integers, we form
\[
P:= PSL_3(\Qi{q})\times \widehat{\Zb}_q\times PSL_3(\Qi{p})
\]
and give $P$ the product topology. The group $P$ is a locally pro-nilpotent and locally pro-$\{p,q\}$ t.d.l.c.s.c. group. \par

\indent We now compute $L_p$ as given by Proposition~\ref{prop:loc_nil}. It is easy to verify $\widehat{\Zb}_q\times PSL_3(\Qi{p})\sleq L_p$. On the other hand,
\[
P/\left(\widehat{\Zb}_q\times PSL_3(\Qi{p})\right)\simeq PSL_3(\Qi{q}).
\]
Letting $\pi$ be the usual projection, $\pi(L_p)$ is then a closed normal subgroup of $PSL_3(\Qi{q})$. The group $PSL_3(\Qi{q})$ is topologically simple, so $\pi(L_p)$ is either trivial or the entire group. Suppose for contraction $\pi(L_p)=PSL_3(\Qi{q})$. Thus, $\pi(N_{\mc{D}})$ is a dense subgroup of $\pi(L_p)$, and moreover, each $g\in \pi(N_{\mc{D}})$ normalizes a compact open subgroup of $PSL_3(\Qi{q})$. Results of Willis and H. Gl\"{o}ckner \cite[Theorem 5.2]{GW01} now imply $\pi(L_p)=PSL_3(\Qi{q})$ is elementary contradicting \cite[Proposition 6.3]{W_1_14}. We conclude $L_p= \widehat{\Zb}_q\times PSL_3(\Qi{p})$. This example shows we \emph{may not assume} $L_p$ is locally pro-$p$ if $P$ is not elementary free.\par

\indent It is now easy to verify the ascending elementary series for $P$ is so that $A_1=\widehat{\Zb}_q$ and $A_2=P$. Hence, $A_2/A_1\simeq PSL_3(\Qi{q})\times PSL_3(\Qi{p})$, which is a direct product of a locally pro-$q$ group with a locally pro-$p$ group. We have thus computed the decomposition given by Theorem~\ref{thm:loc_nil_main}.

\section{Lie groups over the $p$-adics}
We now consider l.c.s.c. $p$-adic Lie groups. The primary references for this section are \cite{Bo98} for Lie group theory and \cite{Mar91} for algebraic group theory.

\subsection{Preliminaries} 
\begin{defn}
A \textbf{Lie group} is a topological group with a $\mathbb{K}$-manifold structure such that the group operations are analytic where $\mathbb{K}$ is either $\mathbb{R}$, $\mathbb{C}$, or some non-discrete complete ultrametric field.
\end{defn}

\indent A \textbf{$p$-adic Lie group} is a Lie group over $\mathbb{Q}_p$, the $p$-adic numbers; note that l.c.s.c. $p$-adic Lie groups are t.d.l.c.s.c. groups. For our purposes, the following characterization of l.c. $p$-adic Lie groups is much more useful:

\begin{thm}[Lazard \cite{L65}, Lubotzky, Mann \cite{LM87}]\label{thm:p-adic_char}
Suppose $G$ is a t.d.l.c. group. Then $G$ is a $p$-adic Lie group if and only if $G$ has a compact open subgroup that is pro-$p$ and has finite rank.
\end{thm}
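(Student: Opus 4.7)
\medskip

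\noindent\textbf{Proof proposal.} This is the fundamental characterisation of compact $p$-adic analytic groups; the plan is to sketch how one reduces the statement, in both directions, to the theory of uniform (equivalently, powerful finitely generated) pro-$p$ groups, and to indicate where Lazard's and Lubotzky--Mann's contributions enter.

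First, I would observe that the statement is local in nature: by van Dantzig's theorem, $G$ admits a compact open subgroup $U$, and $G$ is a $p$-adic Lie group if and only if $U$ is (since the analytic manifold structure on $U$ extends uniquely to $G$ by left translation, given that $G$ is a topological group). Thus the problem reduces to characterising, among profinite groups, those that admit a $\mathbb{Q}_p$-analytic structure compatible with their topology.

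For the ``only if'' direction I would argue as follows. Suppose $U$ is a compact $p$-adic Lie group. Using that $U$ is a $\mathbb{Q}_p$-analytic manifold, one finds an open neighbourhood of the identity admitting an exponential chart, and the Baker--Campbell--Hausdorff series gives an analytic local group law. The key step, due to Lazard, is to find a $\mathbb{Z}_p$-lattice $L$ in the Lie algebra $L(U)$ that is closed under the bracket and on which the BCH series converges $p$-adically, so that $\exp(L)$ is a compact open subgroup $V\sleq U$ that is topologically a uniform pro-$p$ group. A uniform pro-$p$ group is finitely generated, powerful, and torsion-free; in particular it is pro-$p$ and, by the standard rank theory (Lubotzky--Mann, and also in Wilson's \cite{Wil98}), has finite rank equal to $\dim_{\mathbb{Q}_p}L(U)$. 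Hence $V$ is a compact open pro-$p$ subgroup of $G$ of finite rank.

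For the ``if'' direction, assume $U\sleq_o G$ is pro-$p$ of finite rank. Here I would invoke the Lubotzky--Mann theorem that every finite rank pro-$p$ group has an open characteristic subgroup $V$ that is uniform. It then suffices to equip $V$ with the structure of a $\mathbb{Q}_p$-analytic group, since this structure will propagate to $U$ and then to $G$ by left translation. The construction of the analytic structure on the uniform pro-$p$ group $V$ is the crux of Lazard's theory: one defines a $\mathbb{Z}_p$-Lie algebra structure on $V$ using the maps
\[
(g,h)\mapsto \lim_{n\to\infty}(g^{p^n}h^{p^n})^{p^{-n}}\quad\text{and}\quad (g,h)\mapsto \lim_{n\to\infty}[g^{p^n},h^{p^n}]^{p^{-2n}},
\]
and then checks, via BCH, that the resulting bijection $V\leftrightarrow \mathbb{Z}_p^d$ transports an analytic group structure to $V$ in which the multiplication is given by a convergent power series.

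The main obstacle, and the genuinely hard content of the theorem, is this last step: constructing the $\mathbb{Q}_p$-analytic structure on a uniform pro-$p$ group and verifying that the resulting group law is analytic. This is where Lazard's deep analysis of saturable/equipotent filtrations enters, and where the Lubotzky--Mann reformulation in terms of powerful finitely generated pro-$p$ groups is indispensable because it gives a clean, verifiable hypothesis on $U$ from which a uniform open subgroup can be extracted. As this is essentially the content of \cite{L65} and \cite{LM87}, in practice I would simply cite those works; the two paragraphs above are a roadmap of what such a citation is standing in for.
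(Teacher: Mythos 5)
The paper states this result purely as a citation to Lazard \cite{L65} and Lubotzky--Mann \cite{LM87} and offers no proof of its own, so your proposal --- a correct roadmap through uniform/powerful finitely generated pro-$p$ groups, ending with the observation that one would in practice cite those works --- is essentially the same treatment. Your sketch of both directions (exponentiating a suitable $\mathbb{Z}_p$-lattice to get a uniform open subgroup, and conversely extracting a uniform open subgroup from a finite rank pro-$p$ group and transporting the analytic structure via the limit formulas for the associated Lie algebra) accurately summarizes what the citation stands for.
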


\indent A l.c.s.c. $p$-adic Lie group $G$ comes with a Lie algebra, denoted $L(G)$. There is a canonical representation $Ad:G\rightarrow \Aut(L(G))$ with $\Aut(L(G))$ a closed subgroup of $GL_n(\mathbb{Q}_p)$ for some $n$, \cite[III.3.11 Corollary 5]{Bo98}. \par

\indent We also make use of algebraic group theory; in the following $k$ denotes a field.

\begin{defn}
An \emph{algebraic group} is an algebraic variety $G$ with group operations given by $\cdot$ and $*^{-1}$ such that $\cdot: G^{\times 2}\rightarrow G$ and $*^{-1}:G\rightarrow G$ are morphisms of algebraic varieties. 
\end{defn}

\indent An algebraic group $G$ is \textbf{connected} if it is connected in the Zariski topology. A subgroup $H\sleq G$ is \textbf{algebraic} if $H$ is an algebraic subvariety of $G$. When an algebraic group is a variety defined over a field $k$, we say $G$ is a \textbf{$k$-algebraic group}. In general, the prefix ``$k$-" is applied to indicate an algebraic object is defined over the field $k$.\par

\indent There are a number of notions of simplicity in the category of algebraic groups. Since confusing these definitions is easy and dangerous, we enumerate these: An algebraic group $G$ is called \textbf{absolutely simple} if $\{1\}$ is the only proper algebraic normal subgroup of $G$. If all proper algebraic normal subgroups are finite, $G$ is called \textbf{almost absolutely simple}. If $\{1\}$ is the only proper $k$-algebraic normal subgroup, $G$ is called $k$\textbf{-simple}. If all proper $k$-algebraic normal subgroups are finite, $G$ is said to be \textbf{almost $k$-simple}.\par

\indent For $G$ a $k$-algebraic group, the set of $k$-rational points of $G$ is denoted $G(k)$; the $k$-rational points are the solutions in $k^{\times n}$ to the polynomials defining $G$. The set $G(k)$ becomes a group under the group operations of $G$. There is a canonical normal subgroup of $G(k)$, denoted $G(k)^+$. Under the assumption $k$ has characteristic zero, $G(k)^+$ is the subgroup generated by all unipotent elements of $G(k)$. (The subgroup $G(k)^+$ exists in the prime characteristic case; the definition, however, is a bit more complicated.) When $k$ is a \emph{local field}, i.e. a non-discrete locally compact topological field, the set of $k$-rational points inherits a topology from the local field $k$. This topology makes $G(k)$ a locally compact, $\sigma$-compact, and metrizable topological group. Whenever we say ``closed in $G(k)$", we mean closed with respect to this topology.\par

\indent We are now prepared to give a central definition:

\begin{defn}[\cite{CCLTV11}]
A $p$-adic Lie group $G$ is said to be of \textbf{simple type} if $G\simeq S(\Qp)^+$ where $S$ is an almost $\Qp$-simple isotropic $\Qp$-algebraic group. We say $G$ is of \textbf{adjoint simple type} if $S$ is also adjoint.
\end{defn}
Note adjoint almost $k$-simple $k$-algebraic groups are $k$-simple.

\subsection{Topologically simple l.c.s.c. $p$-adic Lie groups}
We now show non-elementary topologically simple l.c.s.c. $p$-adic Lie groups are isomorphic to closed subgroups of $GL_n(\Qp)$ and have simple Lie algebras. This is not immediate in the $p$-adic setting because $Ad$ is not necessarily a closed map and there is no longer a bijective correspondence between ideals of the Lie algebra and normal subgroups of the Lie group. 

\begin{prop}\label{prop:sgrp_gl}
If $G$ is a non-elementary topologically simple l.c.s.c. $p$-adic Lie group, then $G$ is isomorphic to a closed subgroup of $GL_n(\Qp)$ and $L(G)$ is simple.
\end{prop}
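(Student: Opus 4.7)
I analyze the adjoint representation $Ad\colon G \to \Aut(L(G)) \subseteq GL(L(G)) \cong GL_n(\Qp)$ (with $n = \dim_{\Qp} L(G)$ and $\Aut(L(G))$ closed in $GL(L(G))$), aiming to prove in turn that $Ad$ is injective, that $L(G)$ is simple, and that $Ad$ identifies $G$ with a closed subgroup of $\Aut(L(G))$. For injectivity, $\ker Ad$ is a closed normal subgroup of $G$, hence equal to $\{1\}$ or $G$ by topological simplicity. In the second case $ad = d(Ad) \equiv 0$ makes $L(G)$ abelian, and then any sufficiently small compact open uniform pro-$p$ subgroup $U$ of $G$ (such a $U$ exists by Theorem~\ref{thm:p-adic_char}) is abelian via the Baker--Campbell--Hausdorff formula, so $G$ would have an open solvable subgroup and be elementary by Theorem~\ref{thm:locsolv_e}, contradicting the hypothesis. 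The same combination of topological simplicity and non-elementarity forces $\Rad{\Es}(G) = \{1\}$ and $\Res{\Es}(G) = G$, so $G$ is elementary-free and Theorem~\ref{thm:[A]semisimple} applies.

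Next, fix a compact open uniform pro-$p$ subgroup $U$ of $G$ with Lazard $\mathbb{Z}_p$-Lie algebra $L_U$, so $L_U \otimes \Qp = L(G)$ and Lazard's correspondence sends ideals of $L_U$ to closed normal subgroups of $U$. I first show $L(G)$ is semisimple: otherwise, the last nontrivial derived power $A = R^{(m-1)}$ of the solvable radical $R \subseteq L(G)$ is a nontrivial abelian ideal of $L(G)$, and a suitable $\mathbb{Z}_p$-sublattice of $A \cap L_U$ yields via Lazard a nontrivial closed normal abelian subgroup of $U$, which is a nontrivial locally normal abelian subgroup of $G$, contradicting Theorem~\ref{thm:[A]semisimple}. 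Decomposing $L(G) = L_1 \oplus \dots \oplus L_k$ into simple ideals, the induced continuous action of $G$ on the finite set $\{L_1, \dots, L_k\}$ gives a homomorphism to $S_k$ whose kernel, by topological simplicity, must be all of $G$ (else $G$ embeds in the finite group $S_k$ and is elementary). Thus each $L_i$ is $Ad(G)$-invariant, and the representation $\pi_i\colon G \to GL(L_i)$ has differential kernel $C_{L(G)}(L_i) = \bigoplus_{j \neq i} L_j$; matching this with $\ker(\pi_i) \in \{\{1\}, G\}$ forces $k = 1$, and so $L(G)$ is simple.

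For the closed embedding, Whitehead's lemma applied to the simple Lie algebra $L(G)$ gives $\mathrm{Der}(L(G)) = ad(L(G))$, so $ad\colon L(G) \to L(\Aut(L(G)))$ is an isomorphism of Lie algebras. The continuous homomorphism $Ad$ between $p$-adic Lie groups is automatically analytic with differential $ad$ at the identity, so the $p$-adic inverse function theorem makes $Ad$ a local analytic isomorphism at $1$; hence $Ad(U)$ is open in $\Aut(L(G))$, so $Ad(G) = \bigcup_{g} Ad(g) Ad(U)$ is a union of cosets of an open subgroup and therefore clopen in $\Aut(L(G))$, and consequently $Ad(G)$ is closed in $GL_n(\Qp)$. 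Finally $Ad|_U\colon U \to Ad(U)$ is a continuous bijection of compact Hausdorff spaces, hence a homeomorphism, so $Ad$ is a local and thus a global topological isomorphism onto its closed image. The principal technical obstacle is the Lazard-correspondence step in the semisimplicity argument: extracting from an abelian $\Qp$-ideal $A \subseteq L(G)$ a genuine closed normal abelian subgroup of $U$ requires choosing $U$ uniform and passing to a suitable $\mathbb{Z}_p$-sublattice inside $A \cap L_U$ that is a saturable ideal of $L_U$.
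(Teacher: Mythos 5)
Your proof is correct, and it follows the same overall skeleton as the paper's (analyze $Ad$, rule out a solvable radical using the absence of locally normal abelian subgroups, use Whitehead's lemma to make $Ad(U)$ open in $\Aut(L(G))$ and hence $Ad(G)$ closed, then deduce simplicity of $L(G)$), but two of the steps are implemented by genuinely different arguments. For injectivity of $Ad$, the paper observes that $\ker(Ad)=G$ would make $G$ uniscalar and invokes Gl\"ockner--Willis to conclude $G$ is elementary; you instead note that $\ker(Ad)=G$ forces $ad=0$, hence $L(G)$ abelian, hence a uniform compact open subgroup is abelian by Baker--Campbell--Hausdorff, so $G$ has an open solvable subgroup and is elementary by Theorem~\ref{thm:locsolv_e}. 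Your route is more self-contained but leans on the Lazard/uniform-subgroup machinery; the paper's avoids that at the cost of citing the scale-function literature. For semisimplicity, the paper uses Levi--Malcev together with Bourbaki's local correspondence between ideals of $L(G)$ and normal subgroups of open subgroups, extracting a locally normal abelian subgroup from the derived series of that normal subgroup; you take the last nonzero derived power of the radical as an abelian ideal and realize it as a closed normal abelian subgroup of a uniform $U$ via a saturable sublattice of $A\cap L_U$ --- the technical point you correctly flag, and which is standard. Both arguments land on Theorem~\ref{thm:[A]semisimple}, for which your observation that topological simplicity plus non-elementarity gives $\Rad{\Es}(G)=\{1\}$ is exactly what is needed. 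Finally, your simplicity argument (the representations $\pi_i$ of $G$ on the simple ideals $L_i$, matching $\ker(\pi_i)\in\{\{1\},G\}$ against $L(\ker\pi_i)=C_{L(G)}(L_i)=\bigoplus_{j\neq i}L_j$) is cleaner and tighter than the paper's analysis of $Ad(G)\cap\Aut(\mathfrak{s}_i)$ inside the finite-index stabilizer of the simple factors, and is a genuine improvement on that step; the closed-embedding step is essentially identical in the two proofs.
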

\begin{proof}
Let $\mathfrak{r}$ be the solvable radical of $L(G)$ and suppose for contradiction $\mathfrak{r}$ is non-trivial. Via the Levi-Malcev theorem, see \cite[I.6.8 Theorem 5]{Bo98}, $\mathfrak{r}$ has a supplement. There is thus $O\sleq_o G$ and a non-trivial $N\trianglelefteq O$ such that $L(N)=\mathfrak{r}$; cf. \cite[ III.7.1 Proposition 2]{Bo98}. Since $\mf{r}$ is solvable, there is $W\sleq_oN$ that is solvable. We may find $U\in \Uca(G)$ with $U\sleq O$ and $U\cap N\sleq W$. The penultimate term in the derived series of $N\cap U$ is thus a non-trivial locally normal abelian subgroup of $G$ contradicting Theorem~\rm\ref{thm:[A]semisimple}. We conclude $\mathfrak{r}$ is trivial and, therefore, $L(G)$ is semisimple.\par

\indent Observe the map $Ad$ is injective. Indeed, else $G=\ker(Ad)$, and applying \cite[Proposition 3.1]{GW01}, every element of $G$ normalizes a compact open subgroup - such a $G$ is called \textbf{uniscalar}. Writing $G=\bigcup_{i\in \omega}O_i$ with $(O_i)_{i\in \omega}$ an $\subseteq$-increasing sequence of compactly generated open subgroups of $G$, each $O_i$ is also uniscalar and via \cite[Theorem 5.2]{GW01}, elementary. This contradicts that $G$ is non-elementary.\par

\indent Fix $U\in \Uca(G)$. In view of \cite[III.3.8 Proposition 28]{Bo98} and the previous paragraph, $U\simeq Ad(U)$ as Lie groups, and $L(G)=L(U)=L(Ad(U))$. Since $L(G)$ is semisimple, \cite[LA 6.7 Corollary 2]{Se64} and \cite[III.3.11 Corollary 5]{Bo98} imply $L(G)=L(\Aut(L(G)))$. Via \cite[III.7.1 Theorem 2]{Bo98}, $Ad(U)$ is then open in $\Aut(L(G))$, and it follows $G\simeq Ad(G)$ and $Ad(G)$ is closed. Since $\Aut(L(G))$ is a closed subgroup of $GL_n(\Qp)$, we conclude $G$ is isomorphic to a closed subgroup of $GL_n(\Qp)$.\par

\indent To see the simplicity of $L(G)$, say $L(G)=\mf{s}_1\times\dots\times \mf{s}_n$ with the $\mf{s}_i$ simple Lie algebras and let $W\sleq \Aut(L(G))$ be the collection of $g\in \Aut(L(G))$ such that $g(\mf{s}_i)=\mf{s}_i$ for each $1\sleq i\sleq n$. The subgroup $W$ is finite index in $\Aut(L(G))$, so $W\cap Ad(G)=Ad(G)$. Identifying $W$ with $\Aut(\mf{s}_1)\times \dots \times \Aut(\mf{s}_n)$, we consider $Ad(G)\cap \Aut(\mf{s}_i)$. If $Ad(G)\cap \Aut(\mf{s}_i)=\{1\}$ for each $i$, then $\Aut(\mf{s}_i)$ is discrete for each $i$ which is absurd. Therefore, $Ad(G)\sleq \Aut(\mf{s}_i)$ for some $i$, and since $Ad(G)$ is open in $\Aut(L(G))$, we infer
\[
L(G)=L(Ad(G))=L(\Aut(\mf{s}_i))=\mf{s}_i,
\]
so $L(G)$ is simple.
\end{proof}

\begin{rmk} 
The above argument can easily be adapted to give a proof of a result of Caprace, Reid, and Willis in forthcoming work \cite{CRW13}: An $[A]$-semisimple l.c.s.c. $p$-adic Lie group has a semisimple Lie algebra.
\end{rmk}

We now show non-elementary topologically simple l.c.s.c. $p$-adic Lie groups have a close relationship with $\Qp$-algebraic groups. This requires a result from the literature.

\begin{prop}[{\cite[Proposition 6.5]{CCLTV11}}]\label{prop:char_simple}
Let $G$ be a non-compact closed subgroup of $GL_n(\Qp)$ such that every
proper, closed, characteristic subgroup is compact. Then one of the following cases holds:
\begin{enumerate}[(1)]
\item~The closed subgroup generated by all solvable normal subgroups of $G$ is compact open, central in $G$.

\item~$G$ is isomorphic to $\Qp^{\times k}$ for some $k>0$.

\item~$G\simeq S^{\times k}/Z$ where $S$ is a $p$-adic Lie group of simple type, $k\sgeq 1$, and $Z$ is a
central subgroup of $S^{\times k}$ that is invariant under a transitive group of permutations of $\{1,\dots, k\}$.
\end{enumerate}
\end{prop}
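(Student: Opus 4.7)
The plan is to analyze $G$ through its Lie algebra $L(G) \subseteq M_n(\Qp)$ and its Zariski closure $H$ in $GL_n$, using the hypothesis that every proper closed characteristic subgroup of $G$ is compact to funnel the structure into one of the three named cases. The guiding principle throughout is that a closed characteristic subgroup of $G$ is forced to be either compact or all of $G$, while passing to the Zariski closure grants access to the algebraic radical decomposition without losing control of the relevant subgroup lattice of $G$.

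First I would take the Levi--Malcev decomposition $L(G) = \mf{r} \oplus \mf{s}$, with $\mf{r}$ the solvable radical and $\mf{s} = \mf{s}_1 \oplus \dots \oplus \mf{s}_k$ the semisimple part decomposed into simple ideals, and split on whether $\mf{s}=0$. In the solvable case ($\mf{s}=0$), the group $G$ has an open solvable subgroup, and the closed subgroup $R$ generated by all closed solvable normal subgroups is characteristic and, since $L(R) = L(G)$, open. If $R \neq G$ then $R$ is compact open by hypothesis, and the closed characteristic subgroup $[G,R]$ sits inside the compact group $R$; iterating and using that $[G,R]$ is itself characteristic forces $R$ to be central, giving case (1). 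If $R = G$, then pushing through the derived series while using that every closed characteristic subgroup is compact forces $G$ to be abelian, and the structure theory of non-compact abelian $p$-adic Lie groups whose proper closed characteristic subgroups are compact (via Pontryagin-type arguments on the topologically finitely generated quotient) yields $G \simeq \Qp^{\times k}$, giving case (2).

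In the semisimple case ($\mf{s} \neq 0$), let $H$ denote the Zariski closure of $G$ in $GL_n$, a $\Qp$-algebraic group with Lie algebra containing $L(G)$. The Zariski-connected component $H^\circ$ admits an almost direct product decomposition $R(H^\circ) \cdot S_1 \cdots S_m$ with each $S_j$ almost $\Qp$-simple, and the $S_j$ biject with the $G$-orbits on $\{\mf{s}_1, \dots, \mf{s}_k\}$. If the orbit structure were non-trivial, summing over a proper orbit would lift to a proper, non-compact, closed characteristic subgroup of $G$, contradicting the hypothesis; similarly $G \cap R(H^\circ)(\Qp)$ must be compact. Hence all $S_j$ are $G$-conjugate to a common almost $\Qp$-simple $S$, and combined with the description of $H^\circ(\Qp)^+$ as an almost direct product of the $S_j(\Qp)^+$ modulo a finite central kernel $Z$, one arrives at $G \simeq S^{\times k}/Z$ with $Z$ invariant under the transitive permutation action, which is exactly case (3).

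The main obstacle is the careful bookkeeping between the abstract Lie group $G$, its Zariski closure $H$, the rational points $H(\Qp)$, and the unipotent-generated subgroup $H(\Qp)^+$: the hypothesis is stated entirely inside $G$ while the structural decomposition lives on the algebraic side. One must verify that closed characteristic subgroups of $G$ correspond appropriately to $\Qp$-algebraic normal subgroups of $H^\circ$ (up to finite-index issues), and that the finite isogeny kernel $Z$ appearing in (3) captures exactly the obstruction between $G$ and a genuine direct product $S^{\times k}$. Verifying that $Z$ is invariant under the permutation group on the $k$ factors, in particular, requires careful tracking of how the conjugation action of $G$ lifts through the isogeny to $S^{\times k}$, and this is the subtlest point in the argument.
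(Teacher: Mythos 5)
You should first be aware that the paper does not prove this proposition at all: it is quoted verbatim from \cite[Proposition 6.5]{CCLTV11} and used as a black box, so there is no in-paper argument to compare yours against. What you have written is therefore an attempt to reprove the cited result. Your outline (Levi--Malcev decomposition of $L(G)$, then passage to the Zariski closure and the almost-direct-product structure of its semisimple part) is the right general strategy for a statement of this kind, but as a proof it has concrete gaps.

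The most serious one is a conflation of \emph{normal} with \emph{characteristic}. The hypothesis constrains only proper closed \emph{characteristic} subgroups, yet at the decisive step you argue that ``summing over a proper $G$-orbit'' of the simple ideals $\mf{s}_1,\dots,\mf{s}_k$ yields a proper non-compact closed \emph{characteristic} subgroup. A union of $G$-conjugation orbits generates a closed normal subgroup, not a characteristic one; to invoke the hypothesis you must work with $\Aut(G)$-orbits on the simple factors, and it is precisely the transitivity of the $\Aut(G)$-action (lifted through the isogeny) that produces the ``transitive group of permutations preserving $Z$'' in conclusion (3) --- a point your sketch never actually derives. A second gap is in case (1): from ``$R$ is compact open'' you claim that iterating $[G,R]$ forces $R$ to be central, but $\ol{[G,R]}\sleq R$ only gives a descending chain of compact characteristic subgroups, which need not terminate at $\{1\}$; centrality requires a separate argument. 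Finally, the mixed case $\mf{r}\neq 0$, $\mf{s}\neq 0$ and the passage from ``$G\cap R(H^{\circ})(\Qp)$ is compact'' to the exact isomorphism $G\simeq S^{\times k}/Z$ are not handled: you must still show that every compact normal subgroup of $G$ is centralized by the simple factors (e.g.\ via properness of non-trivial homomorphisms from $S(\Qp)^+$, as in Lemma~\ref{lem:BM}) and hence is central and absorbed into the finite group $Z$. You correctly flag the dictionary between characteristic subgroups of $G$ and algebraic subgroups of the Zariski closure as the main obstacle, but flagging it is not the same as resolving it; as it stands the proposal is an outline rather than a proof.
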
 

With our preliminary work in hand, we now prove the first theorem of this section.

\begin{thm}\label{thm:simple_type}
Suppose $G$ is a non-elementary topologically simple l.c.s.c. $p$-adic Lie group. Then $G\simeq H(\Qp)^+$ with $H$ an adjoint $\Qp$-simple isotropic $\Qp$-algebraic group. That is to say, $G$ is of adjoint simple type.
\end{thm}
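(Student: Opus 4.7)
The plan is to combine Proposition~\ref{prop:sgrp_gl} with Proposition~\ref{prop:char_simple} to pin $G$ down to the third alternative of the latter, and then pass to the adjoint quotient via a central isogeny. By Proposition~\ref{prop:sgrp_gl}, $G$ embeds as a closed subgroup of $GL_n(\Qp)$, and topological simplicity makes every proper closed characteristic subgroup trivial, hence a fortiori compact. To meet the remaining hypotheses of Proposition~\ref{prop:char_simple}, I would also observe that $G$ is non-compact and non-discrete: compact profinite groups and countable discrete groups lie in $\Es$ by definition, contradicting non-elementarity.

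I would then rule out cases (1) and (2) of Proposition~\ref{prop:char_simple}. In case (1), the prescribed compact open central subgroup is closed and normal, so by topological simplicity it is either $\{1\}$ or $G$; the former fails because $\{1\}$ is not open in the non-discrete group $G$, and the latter because $G$ is non-compact. In case (2), $G$ is abelian, hence has an open solvable subgroup, so $G\in\Es$ by Theorem~\ref{thm:locsolv_e}, a contradiction.

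We are thus in case (3): $G\simeq S^{\times k}/Z$ where $S=T(\Qp)^+$ for some almost $\Qp$-simple isotropic $\Qp$-algebraic group $T$, and $Z$ is central in $S^{\times k}$. To show $k=1$, I would examine the image in $G$ of the first factor $S\times\{1\}^{k-1}$, which is closed and normal, hence by topological simplicity either trivial or all of $G$. If trivial, then $S\times\{1\}^{k-1}\sleq Z\sleq Z(S)^{\times k}$ forces $S$ to be abelian, contradicting that $S$ of simple type has a simple non-abelian Lie algebra. If all of $G$, then $Z$ projects surjectively onto $S^{\times(k-1)}$; but $Z\sleq Z(S)^{\times k}$ is finite (since the centre of $T$ is finite, $T$ being almost $\Qp$-simple) while $S$ is non-compact and hence infinite, forcing $k=1$.

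Finally, with $k=1$ we have $G\simeq T(\Qp)^+/Z$, and I would pass to the adjoint $H:=T/Z(T)$, which is adjoint $\Qp$-simple and inherits isotropicity from $T$. The central $\Qp$-isogeny $T\to H$ induces a continuous surjection $\varphi\colon T(\Qp)^+\to H(\Qp)^+$ with kernel $Z(T)(\Qp)\cap T(\Qp)^+$; by Tits' abstract simplicity theorem this kernel is the unique maximal proper normal subgroup of $T(\Qp)^+$. Topological simplicity of $G=T(\Qp)^+/Z$ then identifies $Z$ with this kernel, yielding $G\simeq H(\Qp)^+$ as topological groups. The main obstacle, in my view, is this final identification: invoking the appropriate form of Tits' simplicity theorem and checking that $\varphi$ descends to a topological (not merely abstract) isomorphism requires some care with the $\Qp$-topology on rational points and the fact that $T(\Qp)^+$ is open in $T(\Qp)$.
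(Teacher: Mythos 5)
Your proposal is correct and follows essentially the same route as the paper: embed $G$ in $GL_n(\Qp)$ via Proposition~\ref{prop:sgrp_gl}, apply Proposition~\ref{prop:char_simple}, eliminate cases (1), (2) and the possibility $k>1$ using non-elementarity and topological simplicity, and then pass to the adjoint group by a central $\Qp$-isogeny. The only cosmetic differences are that the paper rules out $k>1$ by showing the factors $\psi(S_i)$, $i\neq 1$, centralize a dense subgroup and hence are central (so trivial), and it identifies $Z$ with the kernel of the isogeny restricted to $T(\Qp)^+$ using the triviality of the centre of the adjoint $+$-points rather than Tits' abstract simplicity theorem.
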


\begin{proof}
Via Proposition~\rm\ref{prop:sgrp_gl}, we may identify $G$ with a closed subgroup of $GL_n(\Qp)$ and apply Proposition~\ref{prop:char_simple}. Cases $(1)$ and $(2)$ are impossible since $G$ is non-elementary, so $G\simeq S^{\times k}/Z$ for some $k\sgeq 1$. Suppose $k>1$ for contradiction and let $\psi:S^{\times k}\rightarrow G$ be the obvious homomorphism. Writing $S^{\times k}=S_1\times\dots\times S_k$, we see $\psi(S_i)\trianglelefteq G$ for each $i$, hence $\ol{\psi(S_i)}=G$ for some $i$. We may assume $i=1$.\par

\indent Taking $i\neq 1$, the group $\psi(S_i)$ commutes with $\psi(S_1)$. It follows $G$ commutes with $\psi(S_i)$, hence $\psi(S_i)$ is central in $G$. Since $G$ is non-elementary, $\psi(S_i)=\{1\}$, and $S_i\sleq Z$. The group $S$ is therefore abelian, and this is absurd as $G$ is non-elementary. We conclude $G\simeq S/Z$ where $S=T(\Qp)^+$ for $T$ some almost $\Qp$-simple isotropic $\Qp$-algebraic group. \par

\indent Let $\ol{T}$ be the adjoint group of $T$ and let $\overline{p}:T\rightarrow \ol{T}$ be the canonical $\Qp$-isogeny, \cite[1.4.11]{Mar91}. Since $T$ is almost $\Qp$-simple and isotropic, $\ol{T}$ is an adjoint $\Qp$-simple isotropic $\Qp$-algebraic group. Further, $\ol{p}:T(\Qp)^+\rightarrow \ol{T}(\Qp)^+$ continuously and surjectively, \cite[2.3.4]{Mar91} and \cite[1.5.5]{Mar91}. By \cite[1.5.6]{Mar91}, the group $\ol{T}(\Qp)^+$ has trivial centre, whereby 
\[
\ker(\ol{p}\upharpoonright_{T(\Qp)^+})=Z.
\]
It follows $G\simeq \ol{T}(\Qp)^+$, and we conclude $G$ is of adjoint simple type.
\end{proof}

In the next section, $\Qp$-simplicity is not sufficient for our purposes. There is, fortunately, a way to account for this.
\begin{fact}[{\cite[1.7]{Mar91}}] Let $k$ be a local field. If $S$ is an adjoint $k$-simple isotropic $k$-algebraic group, then there is $k'$ a finite separable extension of $k$ and an adjoint absolutely simple isotropic $k'$-algebraic group $S'$ such that $S'(k')^+\simeq S(k)^+$ as topological groups.
\end{fact}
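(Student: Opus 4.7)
The plan is to exploit restriction of scalars (the Weil restriction functor). For any finite separable extension $k'/k$ and any $k'$-algebraic group $S'$, the restriction of scalars $R_{k'/k}(S')$ is a $k$-algebraic group characterized by the functorial identity $R_{k'/k}(S')(R) = S'(R\otimes_k k')$ for every commutative $k$-algebra $R$. In particular $R_{k'/k}(S')(k) = S'(k')$ as abstract groups, and when $k$ is a local field this identification is a homeomorphism with respect to the natural local-field topologies, since $R_{k'/k}$ is compatible with scheme-theoretic points and $k'$ inherits its locally compact topology from $k$ via the extension.

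First, I would invoke the standard structure theorem for $k$-simple algebraic groups (Borel--Tits, appearing in the same chapter of Margulis): an adjoint $k$-simple $k$-algebraic group $S$ is necessarily of the form $S \simeq R_{k'/k}(S')$ for some finite separable extension $k'/k$ and some adjoint absolutely simple $k'$-algebraic group $S'$. (The field $k'$ and the group $S'$ are recovered essentially uniquely: $S'$ is a Galois-conjugate summand in the decomposition of $S\times_k \bar{k}$ into simple factors, and $k'$ is the fixed field of the stabilizer of that summand.) Adjointness of $S'$ follows from adjointness of $S$, since the center of $R_{k'/k}(S')$ is $R_{k'/k}$ of the center of $S'$.

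Next, I would check that $S$ is $k$-isotropic if and only if $S'$ is $k'$-isotropic. This is the statement that a maximal $k$-split torus of $R_{k'/k}(S')$ corresponds, under the restriction-of-scalars adjunction, to a maximal $k'$-split torus of $S'$, and both are nontrivial simultaneously. With $S'$ in hand, the topological isomorphism $S(k)\simeq S'(k')$ from the first paragraph automatically carries the subgroup generated by unipotent elements to the subgroup generated by unipotent elements, because the image of a unipotent element of $S'(k')$ in $S(k)$ is unipotent (restriction of scalars of a unipotent subgroup is unipotent), and conversely the standard description of $G(k)^+$ via parabolic $k$-subgroups transfers under the correspondence between parabolic $k$-subgroups of $R_{k'/k}(S')$ and parabolic $k'$-subgroups of $S'$. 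Hence $S(k)^+ \simeq S'(k')^+$ as topological groups, which is what we need.

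The main obstacle is that none of the above is carried out in the paper; everything rests on the existence and good behavior of the Weil restriction construction together with the Borel--Tits descent of simple factors through $\mathrm{Gal}(\bar{k}/k)$. Establishing these from scratch is a nontrivial chunk of algebraic group theory, which is precisely why the result is cited as a fact from Margulis's book rather than proved here. If I had to present details, the delicate point would be verifying the continuity and openness of the map $S(k)\to S'(k')$ simultaneously with the matching of the $(\cdot)^+$ subgroups; the abstract group isomorphism is formal, but the fact that it is a \emph{topological} isomorphism requires knowing that the $k'$-topology on $S'(k')$ coincides with the $k$-topology inherited from $R_{k'/k}(S')(k)$, which follows from the finiteness of $k'/k$.
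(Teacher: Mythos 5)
Your outline is correct and is precisely the standard restriction-of-scalars argument underlying the cited passage of Margulis: the paper itself gives no proof of this statement, importing it verbatim as \cite[1.7]{Mar91}, and that reference establishes exactly the decomposition $S\simeq R_{k'/k}(S')$ with the topological identification $S(k)^+\simeq S'(k')^+$ that you describe. So your proposal matches the (cited) source's approach; the only caveat is that, as you acknowledge, filling in the Borel--Tits descent and the compatibility of the $(\cdot)^+$ subgroups with Weil restriction is a substantial body of theory that neither you nor the paper reproves.
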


\begin{cor}\label{cor:simple_type}
If $G$ is a non-elementary topologically simple l.c.s.c. $p$-adic Lie group, then $G\simeq H(k)^+$ as topological groups where $H$ is an adjoint absolutely simple isotropic $k$-algebraic group with $k$ a finite extension of $\Qp$.
\end{cor}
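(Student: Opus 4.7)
The plan is to obtain the corollary by chaining Theorem~\ref{thm:simple_type} with the immediately preceding Fact from \cite[1.7]{Mar91}. Concretely, first I would apply Theorem~\ref{thm:simple_type} to the non-elementary topologically simple l.c.s.c.\ $p$-adic Lie group $G$ to produce a topological group isomorphism $G\simeq S(\Qp)^+$, where $S$ is an adjoint $\Qp$-simple isotropic $\Qp$-algebraic group. This reduces the corollary to upgrading ``$\Qp$-simple'' to ``absolutely simple'' at the cost of enlarging the base field.

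That upgrade is exactly what the cited Fact provides. I would apply it with $k:=\Qp$, which is a local field, and with the adjoint $\Qp$-simple isotropic $\Qp$-algebraic group $S$ furnished by Theorem~\ref{thm:simple_type}. The Fact then yields a finite separable extension $k'$ of $\Qp$ together with an adjoint absolutely simple isotropic $k'$-algebraic group $H$ such that
\[
H(k')^+\simeq S(\Qp)^+
\]
as topological groups. Since every finite separable extension of $\Qp$ is in particular a finite extension of $\Qp$, $k'$ meets the hypothesis of the corollary. Composing the two topological isomorphisms gives $G\simeq H(k')^+$, which is the claim after renaming $k'$ to $k$.

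There is essentially no obstacle here beyond invoking the two ingredients correctly; the only point worth flagging in the write-up is that one must cite Theorem~\ref{thm:simple_type} for the production of $S$ and the Fact for the passage to an absolutely simple group over a finite extension, and observe that topological group isomorphisms compose, so the resulting identification $G\simeq H(k)^+$ is again a topological group isomorphism.
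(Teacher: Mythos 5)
Your proposal is correct and is exactly the argument the paper intends: the corollary is stated as an immediate consequence of Theorem~\ref{thm:simple_type} combined with the Fact from \cite[1.7]{Mar91}, applied with $k=\Qp$. Nothing further is needed.
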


\subsection{Automorphism groups}
For $k$ a local field and $S$ an algebraic group, we consider $\Aut\left(S(k)^+\right)$ to be the group of topological group automorphisms of $S(k)^+$, $\Inn\left(S(k)^+\right)$ to be the collection of inner automorphisms, $\Aut(S)$ to be the group of algebraic automorphisms of $S$, and $\Aut(k)$ to be the group of field automorphisms of $k$. We do not consider the elements of $\Aut(k)$ to be continuous.\par

\indent We now study the automorphism groups of non-elementary topologically simple l.c.s.c. $p$-adic Lie groups. In view of the previous subsection, this is really a question concerning the automorphism group of $S(k)^+$ for a $k$-algebraic group $S$. Conveniently, there is a deep theorem concerning such automorphism groups:  

\begin{thm}[{\cite[Corollaire 8.13]{BT73}}, cf. {\cite[1.8.2]{Mar91}}]\label{thm:BT}
Suppose $k$ is a finite extension of $\Qp$ and $S$ is a $k$-algebraic group that is adjoint, absolutely simple, and isotropic. If $\alpha\in \Aut\left(S(k)^+\right)$, then there is a unique $\phi\in \Aut(k)$ and unique $k$-isomorphism $\beta: \prescript{\phi}{}{S}\rightarrow S$ such that $\alpha=\beta\circ \phi^0\upharpoonright_{S(k)^+}$.
\end{thm}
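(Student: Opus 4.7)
The plan is to prove this rigidity theorem of Borel and Tits by showing that any topological automorphism $\alpha$ of $S(k)^+$ is determined, up to a field twist, by its action on a maximal $k$-split torus and the associated relative root subgroups. Broadly, I would first conjugate $\alpha$ into a convenient form, then extract a field automorphism from its behaviour on the root subgroups, and finally package the remainder as a $k$-algebraic isomorphism.

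First I would reduce to the case that $\alpha$ stabilizes a maximal $k$-split torus $T$ together with a positive system of relative roots $\Phi^+(T,S)$. This uses that $S(k)^+$ acts transitively by conjugation on its maximal $k$-split tori, so since $\alpha$ permutes such tori, after composing with an inner automorphism one may assume $\alpha(T(k)\cap S(k)^+)=T(k)\cap S(k)^+$. The induced permutation of $\Phi(T,S)$ is then the composite of a Weyl-group element and a diagram automorphism; composing once more with a representative of a suitable Weyl element, I may assume a chosen positive system is preserved.

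The main step is to analyze the restriction of $\alpha$ to each relative root subgroup $U_\eta(k)$. In the isotropic setting each $U_\eta(k)$ carries the structure of a nilpotent $k$-analytic group filtered by vector subgroups, and a classical rigidity result for abstract homomorphisms of $k$-vector groups over a local field shows that such an isomorphism factors as a field automorphism $\phi_\eta\in\Aut(k)$ followed by a $k$-algebraic map. The commutation relations between distinct root subgroups (the graded Lie-algebra structure attached to the relative root system) force the $\phi_\eta$ to coincide across all $\eta$, yielding a single $\phi\in\Aut(k)$. Reassembling this local data along the Bruhat decomposition produces a $k$-isomorphism $\beta:\prescript{\phi}{}{S}\to S$ with $\alpha=\beta\circ\phi^0\upharpoonright_{S(k)^+}$.

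Uniqueness is then straightforward: if $\beta\circ\phi^0=\beta'\circ(\phi')^0$ on $S(k)^+$, then $(\beta')^{-1}\beta$ realizes $(\phi')^{-1}\phi$ as a $k$-algebraic automorphism of $S$, and adjointness together with absolute simplicity of $S$ forces $\phi=\phi'$ and hence $\beta=\beta'$. The hard part will be the root-subgroup analysis: for non-split or non-simply-laced $S$ the groups $U_\eta(k)$ are not merely vector groups, and one must use the isotropy hypothesis together with the $k$-analytic filtration on each root group and the compatibility imposed by the commutation relations to propagate a single field automorphism $\phi$ uniformly across all root subgroups; this is exactly the point where Borel–Tits's detailed study of unipotent radicals of minimal parabolic $k$-subgroups enters.
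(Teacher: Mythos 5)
A point of context first: the paper does not prove this statement at all --- it is quoted directly from Borel--Tits \cite{BT73}, Corollaire 8.13 (cf.\ \cite{Mar91}, 1.8.2) --- so you are attempting to reprove a deep theorem of the literature rather than reconstructing an argument that appears in the paper.

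Your outline does track the broad architecture of the Borel--Tits proof (normalize $\alpha$ relative to a maximal $k$-split torus and a positive system, analyze the root subgroups, extract a field automorphism, reassemble along the Bruhat decomposition), but two of your steps are genuine gaps rather than routine verifications. First, you assert that $\alpha$ ``permutes the maximal $k$-split tori.'' An element of $\Aut\left(S(k)^+\right)$ is only a (topological) group automorphism; there is no a priori reason it should carry the rational points of a maximal $k$-split torus, or of a root subgroup, to a subset of the same kind. Showing that an abstract automorphism preserves unipotence and hence the parabolic and root-group structure is precisely the main content of Borel--Tits, obtained through a delicate study of the subgroups generated by pairs of opposite root groups; it cannot be assumed at the outset without begging the question. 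Second, the ``classical rigidity result for abstract homomorphisms of $k$-vector groups'' you invoke does not exist: the abstract --- and even the continuous --- automorphism group of $(k^n,+)$ is enormous (any $\Qp$-linear bijection is a continuous automorphism), and in particular is not generated by $\Aut(k)$ together with $k$-algebraic maps. Semi-linearity of $\alpha\upharpoonright_{U_\eta(k)}$ can only be forced by exploiting the action of the torus and of the rank-one subgroups on $U_\eta$, which is how one recovers the multiplicative structure of $k$ from the group structure; this is where the real work lies, and your sketch does not supply it. Your uniqueness argument at the end is fine. For the purposes of this paper, the correct move is simply to cite \cite{BT73}.
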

\noindent A definition of $\prescript{\phi}{}{S}$ and $\phi^0$ may be found in \cite[1.7]{Mar91}.\par

\indent Let us explore Theorem~\rm\ref{thm:BT} further. Suppose $k$ is a finite extension of $\Qp$ and $S$ is an adjoint absolutely simple isotropic $k$-algebraic group. For each $\alpha \in \Aut\left(S(k)^+\right)$, there is $\phi_{\alpha}\in \Aut(k)$ and $\beta_{\alpha}:\prescript{\phi_{\alpha}}{}{S}\rightarrow S$ as given by Theorem~\rm\ref{thm:BT}. For each $\phi \in \Aut(k)$, we may thus define 
\[
\Sigma_{\phi}:=\left\{\alpha \in \Aut\left(S(k)^+\right)\mid\phi_{\alpha}=\phi\right\},
\]
so 
\[
\Aut\left(S(k)^+\right)=\bigcup_{\phi\in \Aut(k)}\Sigma_{\phi}.
\]
Via \cite[2.3]{BT73}, each $\phi\in \Aut(k)$ is continuous and, therefore, is the identity on $\Qp$. We infer $\Aut(k)=\Aut(k/\Qp)$ is indeed a finite group. It follows additionally $\Inn\left(S(k)^+\right)\sleq \Sigma_1$; cf. Theorem~\rm\ref{thm:fin_out}.

\begin{claim}\label{cl:sigma}
$\Sigma_1$ is a finite index subgroup of $\Aut(S(k)^+)$.
\end{claim}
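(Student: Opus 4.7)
The plan is to define a map $F \colon \Aut(S(k)^+) \to \Aut(k)$ by $F(\alpha) := \phi_\alpha$, the field automorphism provided by Theorem~\ref{thm:BT}. The uniqueness assertion of Theorem~\ref{thm:BT} makes $F$ well-defined, and by the very definition of $\Sigma_1$ we have $\Sigma_1 = \ker(F)$. Since the paragraph preceding the claim established that $\Aut(k) = \Aut(k/\Qp)$ is finite, it will be enough to show that $F$ is a group homomorphism; then $\Sigma_1$ has index at most $|\Aut(k/\Qp)|$ in $\Aut(S(k)^+)$.

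To verify multiplicativity of $F$, take $\alpha, \alpha' \in \Aut(S(k)^+)$ with the decompositions $\alpha = \beta \circ \phi_\alpha^0|_{S(k)^+}$ and $\alpha' = \beta' \circ \phi_{\alpha'}^0|_{S(k)^+}$ supplied by Theorem~\ref{thm:BT}. By functoriality of the Galois-twist construction in the formalism of \cite[1.7]{Mar91}, applying the base-change $\phi_\alpha^0$ to the $k$-isomorphism $\beta' \colon \prescript{\phi_{\alpha'}}{}{S} \to S$ produces a $k$-isomorphism $\prescript{\phi_\alpha}{}{\beta'} \colon \prescript{\phi_\alpha \phi_{\alpha'}}{}{S} \to \prescript{\phi_\alpha}{}{S}$ intertwining the two base-change maps, i.e.\ $\phi_\alpha^0 \circ \beta' = \prescript{\phi_\alpha}{}{\beta'} \circ \phi_\alpha^0$. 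Substituting into the composition yields
\[
\alpha \circ \alpha' \;=\; \bigl(\beta \circ \prescript{\phi_\alpha}{}{\beta'}\bigr) \circ (\phi_\alpha \phi_{\alpha'})^0\big|_{S(k)^+},
\]
and since $\beta \circ \prescript{\phi_\alpha}{}{\beta'}$ is a $k$-isomorphism $\prescript{\phi_\alpha \phi_{\alpha'}}{}{S} \to S$, the uniqueness clause of Theorem~\ref{thm:BT} forces $\phi_{\alpha \circ \alpha'} = \phi_\alpha \phi_{\alpha'}$. Thus $F$ is a homomorphism, and the claim follows.

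The main obstacle is the compatibility $\phi_\alpha^0 \circ \beta' = \prescript{\phi_\alpha}{}{\beta'} \circ \phi_\alpha^0$; this is essentially the statement that $\phi \mapsto \prescript{\phi}{}{(-)}$ is functorial on $k$-algebraic groups and their morphisms, a standard feature of the theory of $k$-forms that is built into the very definition of $\prescript{\phi}{}{S}$ and $\phi^0$ in \cite[1.7]{Mar91}. Once this functoriality is granted, the proof is reduced to repeated applications of the uniqueness in Theorem~\ref{thm:BT}.
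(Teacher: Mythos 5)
Your argument is correct, and it proves slightly more than the claim: you show that $\alpha\mapsto\phi_\alpha$ is a (anti)homomorphism $F\colon\Aut(S(k)^+)\to\Aut(k)$ with kernel $\Sigma_1$, so that $|\Aut(S(k)^+):\Sigma_1|\sleq|\Aut(k/\Qp)|<\infty$. The paper reaches the same conclusion from the same two ingredients (the uniqueness clause of Theorem~\ref{thm:BT} and the finiteness of $\Aut(k/\Qp)$), but it deliberately arranges the compositions so that the Galois-twist functoriality you rely on is never needed: to see $\Sigma_1$ is a subgroup it composes $\beta_1\circ\beta_2$ only when both $\phi$'s are trivial, and to see each $\Sigma_\phi$ lies in a single coset it fixes $\alpha_1\in\Sigma_\phi$ and computes $\beta_2\circ\beta_1^{-1}$ for $\alpha_2\in\Sigma_\phi$, using $\beta_1^{-1}(x)=\phi^0(\alpha_1^{-1}(x))$ so that the nontrivial twist cancels rather than propagates. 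Your route is more conceptual and identifies the cosets of $\Sigma_1$ as exactly the fibers of $F$, at the price of invoking the compatibility $\phi^0\circ\beta'=\prescript{\phi}{}{\beta'}\circ\phi^0$; this is indeed standard for the construction in \cite[1.7]{Mar91}, but if you use it you should state it explicitly (including the identification $\prescript{\phi_\alpha}{}{(\prescript{\phi_{\alpha'}}{}{S})}=\prescript{\phi_\alpha\phi_{\alpha'}}{}{S}$ and $(\phi_\alpha\phi_{\alpha'})^0=\phi_\alpha^0\circ\phi_{\alpha'}^0$) rather than leave it as a remark at the end, and you should fix once and for all whether your composition convention makes $F$ a homomorphism or an antihomomorphism — either suffices for the finite-index conclusion.
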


\begin{proof}
Let us first check $\Sigma_1$ is a subgroup. Take $\alpha_1,\alpha_2\in \Aut\left(S(k)^+\right)$ and let $\beta_1$ and $\beta_2$ be the respective $k$-isomorphisms given by Theorem~\rm\ref{thm:BT}. The map $\beta_1\circ \beta_2:G\rightarrow G$ is again a $k$-isomorphism, and
\[
\beta_1\circ \beta_2\upharpoonright_{S(k)^+}=\alpha_1\circ \alpha_2.
\]
By the uniqueness of $\beta_{\alpha_1\circ \alpha_2}$ and $\phi_{\alpha_1\circ \alpha_2}$, we conclude $\beta_{\alpha_1\circ \alpha_2}= \beta_1\circ \beta_2$ and $\phi_{\alpha_1\circ \alpha_2}=1$, whereby $\Sigma_1$ is closed under composition. That $\Sigma_1$ is closed under taking inverses follows similarly.\par

\indent To see $\Sigma_1$ has finite index, it suffices to show for each $\phi \in \Aut(k)$, $\Sigma_{\phi}$ is contained in a right coset of $\Sigma_1$. To this end, fix $\phi \in \Aut(k)$, fix $\alpha_1\in\Sigma_{\phi}$, and let $\beta_1:\prescript{\phi}{}{S}\rightarrow S$ be the $k$-isomorphism given by Theorem~\ref{thm:BT} for $\alpha_1$. Consider $\alpha_2\in\Sigma_{\phi}$ and let $\beta_2:\prescript{\phi}{}{S}\rightarrow S$ be the $k$-isomorphism given by Theorem~\rm\ref{thm:BT} for $\alpha_2$. The map $\beta_{2}\circ \beta_{1}^{-1}$ is then a $k$-automorphism of $S$, and for $x\in S(k)^+$,
\[
\beta_{2}(\beta_{1}^{-1}(x))=\beta_{2}(\phi^0(\alpha^{-1}_1(x)))=\alpha_2\circ \alpha^{-1}_1(x).
\]
By the uniqueness of $\beta_{\alpha_2\circ \alpha^{-1}_1}$ and $\phi_{\alpha_2\circ \alpha^{-1}_1}$, we conclude $\beta_{\alpha_2\circ \alpha^{-1}_1}=\beta_{2}\circ \beta_{1}^{-1}$ and $\phi_{\alpha_2\circ \alpha^{-1}_1}=1$. Therefore, $\alpha_2\circ \alpha_1^{-1}\in \Sigma_1$, so $\alpha_2\in \Sigma_1\alpha_1$. The set $\Sigma_{\phi}$ is thus contained in $\Sigma_1\alpha_1$ verifying the claim.
\end{proof}
As a corollary to the proof of Claim~\rm\ref{cl:sigma}, the map $\Phi:\Sigma_1 \rightarrow \Aut(S)$ defined by $\alpha \mapsto \beta_{\alpha}$ is an injective group homomorphism. \par

\indent To prove the second theorem of this section, we require two additional facts concerning $\Aut(S)$.

\begin{thm}[{\cite[5.7.2]{Ti74}} ]\label{thm:aut_alg} Let $G$ be a semisimple $k$-algebraic group. Then there exists a $k$-algebraic group $\Aut(G)$ such that for any field extension $l$ of $k$, the group $\Aut(G)(l)$ is the group of $l$-automorphisms of $G$. 
\end{thm}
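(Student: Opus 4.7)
The plan is to realise $\Aut(G)$ as an extension of a finite \'{e}tale $k$-group scheme by the adjoint group $G^{\mathrm{ad}} := G/Z(G)$. The identity component will carry the inner automorphisms, and the component group will encode the outer (Dynkin diagram) automorphisms. I would build this first over $\bar{k}$ and then descend to $k$.

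For the inner part, conjugation defines a morphism $G \to \Aut(G)$ whose kernel is the centre $Z(G)$, a finite $k$-group scheme since $G$ is semisimple. This factors through $G^{\mathrm{ad}}$, a $k$-algebraic group, which will play the role of the identity component of the automorphism group.

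Working next over $\bar{k}$, I would fix a pinning $(T,B,\{x_\alpha\})$ of $G_{\bar{k}}$: a maximal torus, a Borel containing it, and root vectors for the simple roots. The classical isomorphism theorem for split semisimple groups (Steinberg; Demazure--Grothendieck, SGA~3) asserts that every automorphism of $G_{\bar{k}}$ factors uniquely as an inner automorphism followed by one permuting the pinning, and the pinning-preserving automorphisms are in bijection with the finite group $D$ of automorphisms of the Dynkin diagram of $G$. Hence
\[
\Aut(G_{\bar{k}}) \simeq G^{\mathrm{ad}}_{\bar{k}} \rtimes D,
\]
which is manifestly a $\bar{k}$-algebraic group representing $l \mapsto \mathrm{Aut}_l(G_l)$ on extensions $l/\bar{k}$.

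The main obstacle is descending to $k$. The absolute Galois group $\Gamma = \mathrm{Gal}(\bar{k}/k)$ acts on $\Aut(G_{\bar{k}})$ compatibly with the structure morphisms, stabilising the normal subgroup $G^{\mathrm{ad}}_{\bar{k}}$ and inducing the so-called $*$-action on $D$; this twist defines a finite \'{e}tale $k$-group scheme $D^{(k)}$. By effective Galois descent for quasi-projective schemes applied to the $\Gamma$-stable structure on $\Aut(G_{\bar{k}})$, one obtains a $k$-algebraic group $\Aut(G)$ fitting into a short exact sequence
\[
1 \to G^{\mathrm{ad}} \to \Aut(G) \to D^{(k)} \to 1.
\]
The functorial property $\Aut(G)(l) = \mathrm{Aut}_l(G_l)$ for field extensions $l/k$ then follows from the compatibility of Galois descent with base change on each of the three pieces, combined with the uniqueness of the pinning-decomposition over $\bar{l}$, which forces any $l$-rational automorphism to lie in the $l$-points of the descended extension.
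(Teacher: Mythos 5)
The paper offers no proof of this statement; it is quoted verbatim from Tits' Boulder notes, so your proposal must be measured against the standard argument in the literature rather than against anything in the text. Your sketch is exactly that standard argument (Steinberg, Demazure--Grothendieck): identity component the adjoint group acting by conjugation, component group a finite \'etale form of the diagram automorphisms, glued together by Galois descent from a splitting field, and it is essentially sound. One point needs repair: over $\bar{k}$ the component group of $\Aut(G)$ is \emph{not} the full group $D$ of Dynkin diagram automorphisms unless $G$ is simply connected or adjoint. A diagram automorphism lifts canonically to the simply connected cover $\tilde{G}$ but descends to $G=\tilde{G}/\mu$ only if it stabilises the finite central subgroup $\mu$, so the outer part of your semidirect product must be the stabiliser of $\mu$ in $D$ (with $D$ understood to include permutations of isomorphic simple factors when $G$ is not almost simple); as written, your $G^{\mathrm{ad}}_{\bar{k}}\rtimes D$ contains elements that do not act on $G$ at all. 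This does not affect the paper, which only applies the theorem to adjoint groups, but it is a genuine error in the statement you prove. Two smaller remarks: the extension of a finite group by $G^{\mathrm{ad}}$ is affine, so effectivity of descent is immediate without invoking quasi-projectivity; and the descent is cleaner run over a finite Galois splitting extension of $k$ with a continuity argument, rather than over $\bar{k}/k$ directly. The final verification that $\Aut(G)(l)$ computes the $l$-automorphisms for arbitrary extensions $l/k$ goes exactly as you indicate, by decomposing an $l$-automorphism over a separable closure of $l$ and using Galois equivariance together with uniqueness of the pinned decomposition.
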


\begin{thm}[{\cite[5.7.2]{Ti74}}]\label{thm:fin_out} If $G$ is an adjoint semisimple $k$-algebraic group, then $G$ is $k$-isomorphic to $\Aut(G)^{\circ}$ where $\Aut(G)^{\circ}$ is the connected component of $1$ and the isomorphism is given by the self action of $G$ by conjugation.
\end{thm}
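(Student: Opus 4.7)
My plan is to exhibit the self-conjugation morphism as the required $k$-isomorphism by showing (i) that it is an injective $k$-morphism of algebraic groups, (ii) that its image lies in the identity component $\Aut(G)^\circ$, and (iii) that its image has finite index in $\Aut(G)$, so that equality with $\Aut(G)^\circ$ follows by connectedness.

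First I would define $c: G \to \Aut(G)$ by sending $g \in G(R)$ to the inner automorphism $x \mapsto gxg^{-1}$ of $G_R$, for each $k$-algebra $R$. By the functorial characterization of $\Aut(G)$ supplied by Theorem~\ref{thm:aut_alg}, this is a $k$-morphism of algebraic groups. Its kernel on $R$-points is the centre $Z(G)(R)$, which is trivial for every $R$ because $G$ is adjoint. Hence $c$ is a monomorphism of $k$-algebraic groups; since homomorphisms of algebraic groups have closed image, $c$ factors through a closed immersion $G \hookrightarrow \Aut(G)$. Connectedness of $G$ then forces $c(G) \subseteq \Aut(G)^\circ$.

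The remaining step is to upgrade the inclusion $c(G) \subseteq \Aut(G)^\circ$ to equality. Here I would invoke the classical structure theorem for automorphisms of semisimple groups due to Chevalley: after fixing a pinning of $G$ over a separable closure $k_s$ (a maximal torus, a Borel containing it, and a compatible system of root vectors), every $k_s$-automorphism of $G$ is uniquely a composition of an inner automorphism with a ``diagram automorphism'' permuting the pinning data. The diagram automorphisms are in bijection with symmetries of the Dynkin diagram of $G$ and therefore form a finite group. This produces an exact sequence of $k$-algebraic groups
\[
1 \longrightarrow c(G) \longrightarrow \Aut(G) \longrightarrow D \longrightarrow 1,
\]
where $D$ is a finite \'etale $k$-group scheme (obtained by Galois descent from the finite symmetry group on $k_s$-points). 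Finiteness of $D$ forces $\Aut(G)^\circ \subseteq c(G)$, which combined with (ii) yields $c(G) = \Aut(G)^\circ$, so $c$ restricts to a $k$-isomorphism $G \to \Aut(G)^\circ$.

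The main obstacle is the pinning decomposition underlying this exact sequence — that is, establishing that modulo inner automorphisms only the finite group of Dynkin diagram symmetries appears, and that this picture descends from $k_s$ to $k$. The descent is handled by the fact that $c$ itself is already defined over $k$ and that $D$ is \'etale, so identifications on geometric points transfer to the level of $k$-group schemes. The final step — concluding that $c$, being a closed immersion of smooth connected algebraic groups whose image has finite index in $\Aut(G)$, realises $G$ as precisely $\Aut(G)^\circ$ — is then a routine dimensional/connectedness consequence of the preceding structural input.
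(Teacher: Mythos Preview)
The paper does not prove this statement at all: Theorem~\ref{thm:fin_out} is simply quoted from \cite[5.7.2]{Ti74} as a black box and then applied in the proof of Theorem~\ref{thm:index}. There is therefore no ``paper's own proof'' to compare against.

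That said, your proposal is the standard argument and is essentially correct. The conjugation map $c$ is a $k$-morphism with trivial kernel since $G$ is adjoint, its image is connected and hence contained in $\Aut(G)^\circ$, and the Chevalley pinning decomposition identifies the cokernel with the finite group of Dynkin diagram symmetries, forcing $c(G)=\Aut(G)^\circ$. You correctly flag the descent of the exact sequence from $k_s$ to $k$ as the one genuinely nontrivial point; this is handled by the fact that the pinning-based splitting over $k_s$ is Galois-equivariant up to inner automorphisms, so the outer automorphism group scheme is a finite \'etale $k$-form of the diagram symmetry group. One small clarification: to conclude $c(G)=\Aut(G)^\circ$ from finite index you also need that $c(G)$ is closed (which you noted) and that $\Aut(G)^\circ$ is itself connected of the same dimension, so the closed immersion of connected groups of equal dimension is an isomorphism --- this is implicit in your last sentence but worth stating cleanly.
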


We are now equipped to prove the desired theorem.
\begin{thm}\label{thm:index} 
Suppose $G$ is a non-elementary topologically simple l.c.s.c. p-adic Lie group, let $\Aut(G)$ be the group of topological group automorphisms of $G$, and let $\Inn(G)$ be the collection of inner automorphisms. Then $\Aut(G)/\Inn(G)$ is finite.
\end{thm}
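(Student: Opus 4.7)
The plan is to reduce the statement to a finiteness assertion in algebraic group theory by reusing, almost verbatim, the framework that was built up to prove Claim~\ref{cl:sigma}. By Corollary~\ref{cor:simple_type}, identify $G$ with $S(k)^+$ for some adjoint absolutely simple isotropic $k$-algebraic group $S$, where $k$ is a finite extension of $\Qp$. Claim~\ref{cl:sigma} already shows that $[\Aut(G):\Sigma_1]<\infty$, so it is enough to prove $[\Sigma_1:\Inn(G)]<\infty$, since $\Inn(G)\sleq \Sigma_1$.

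To this end, I would use the injective homomorphism $\Phi:\Sigma_1\rightarrow \Aut(S)$, $\alpha\mapsto \beta_{\alpha}$, singled out just after the proof of Claim~\ref{cl:sigma}. Because each $\beta_{\alpha}$ is a $k$-automorphism of $S$, Theorem~\ref{thm:aut_alg} lets us regard $\Phi$ as taking values in $\Aut(S)(k)$. For an inner automorphism $c_g$ with $g\in G=S(k)^+$, the map $x\mapsto gxg^{-1}$ is itself a $k$-automorphism of $S$ that restricts to $c_g$ on $S(k)^+$, so by the uniqueness clause of Theorem~\ref{thm:BT}, $\Phi(c_g)$ equals the image of $g$ under the $k$-algebraic conjugation map $S(k)\rightarrow \Aut(S)(k)$. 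By Theorem~\ref{thm:fin_out}, this conjugation map is a $k$-isomorphism $S\xrightarrow{\sim}\Aut(S)^{\circ}$, so on $k$-points it gives an isomorphism $S(k)\simeq \Aut(S)^{\circ}(k)$ under which $\Phi(\Inn(G))$ corresponds exactly to $S(k)^+$.

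The theorem is therefore reduced to showing that $S(k)^+$ has finite index in $\Aut(S)(k)$. This splits into two independent index computations. First, $\Aut(S)^{\circ}(k)$ has finite index in $\Aut(S)(k)$, because the quotient algebraic group $\Aut(S)/\Aut(S)^{\circ}$ is zero-dimensional (i.e.\ a finite $k$-group), and the cokernel of $\Aut(S)^{\circ}(k)\hookrightarrow \Aut(S)(k)$ injects into its finite set of $k$-rational points. Second, and this is the step I expect to be the real obstacle, one must show that $S(k)^+$ has finite index in $S(k)\simeq \Aut(S)^{\circ}(k)$. This is the standard Kneser--Tits/Platonov type statement: passing to the simply connected cover $\widetilde{S}\rightarrow S$, one has $\widetilde{S}(k)=\widetilde{S}(k)^+$ (Kneser--Tits over a local field of characteristic zero), the image of $\widetilde{S}(k)\rightarrow S(k)$ equals $S(k)^+$, and the cokernel is controlled by a Galois cohomology group of the finite central kernel, which is finite over the local field $k$. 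Combining the two indices yields the required finiteness, completing the proof.
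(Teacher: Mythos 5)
Your proposal is correct and follows essentially the same route as the paper: reduce via Claim~\ref{cl:sigma} to bounding $[\Sigma_1:\Inn(G)]$, use $\Phi$ and the uniqueness clause of Theorem~\ref{thm:BT} to identify $\Phi(\Inn(G))$ with $\xi(S(k)^+)\sleq \Aut(S)^{\circ}(k)\simeq S(k)$, and then combine the finiteness of the component group of $\Aut(S)$ with the finiteness of $[S(k):S(k)^+]$. The only difference is that the paper simply cites \cite[2.3.1]{Mar91} for $[S(k):S(k)^+]<\infty$, where you sketch the underlying Kneser--Tits/Galois-cohomology argument.
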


\begin{proof}
In view of Corollary~\rm\ref{cor:simple_type}, we may take $G=S(k)^+$ where $k$ is a finite extension of $\Qp$ and $S$ is an adjoint absolutely simple isotropic $k$-algebraic group. \par

\indent Let $\xi: S\rightarrow \Aut(S)^{\circ}$ send $s\in S$ to the algebraic automorphism given by conjugation with $s$. By Theorem~\rm\ref{thm:fin_out}, $\xi$ is a $k$-isomorphism of $k$-algebraic groups, so $\xi(S(k))= \Aut(S)^{\circ}(k)$; cf. \cite[2.1.2]{Mar91}. On the other hand, let $\Phi:\Sigma_1\rightarrow \Aut(S)$ be as described above and identify 
\[
S(k)^+\simeq \Inn(S(k)^+)\sleq \Aut(S(k)^+).
\]
For $x\in S(k)^+$, we then see $\xi(x)\upharpoonright_{S(k)^+}=\Phi(x)\upharpoonright_{S(k)^+}$, and since $\xi(x)$ is a $k$-automorphism of $S$, the uniqueness of $\Phi(x)$ implies $\xi(x)=\Phi(x)$. Therefore,
\[
\Phi^{-1}\left(\xi(S(k)^+)\right)=\Inn\left(S(k)^+\right).
\]

\indent Now the group $\Phi^{-1}(\Aut(S)^{\circ})$ is finite index in $\Sigma_1$, and 
\[
\Phi:\Phi^{-1}\left(\Aut(S)^{\circ}\right)\rightarrow \Aut(S)^{\circ}(k)=\xi\left(S(k)\right).
\]
Applying {\cite[2.3.1]{Mar91}}, $\xi\left(S(k)^+\right)$ is finite index in $\xi\left(S(k)\right)$, so 
\[
\Phi^{-1}\left(\xi(S(k)^+)\right)=\Inn\left(S(k)^+\right)
\]
is finite index in $\Phi^{-1}(\Aut(S)^{\circ})$. We infer $\Inn(S(k)^+)$ is finite index in $\Sigma_1$. Claim~\rm\ref{cl:sigma} implies $\Inn(S(k)^+)$ is indeed finite index in $\Aut(S(k)^+)$ verifying the theorem.
\end{proof}

\section{Locally of finite rank t.d.l.c.s.c. groups}
We now bring together our previous results to prove the main theorems of this work. We will require an easy preliminary lemma.

\begin{lem}\label{lem:fitt_comm}
If $G$ is a t.d.l.c.s.c. group and $U,V\in \Uca(G)$, then $F(U)\sim_cF(V)$ where $F(U)$ and $F(V)$ are the Fitting subgroups of $U$ and $V$, respectively. In particular, $F(U)$ is commensurated by $G$. 
\end{lem}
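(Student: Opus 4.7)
The plan is to reduce the commensuration of Fitting subgroups of two compact open subgroups to the case of a finite-index inclusion, and then do a standard normal-core / intersection argument inside a profinite group.

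First, given $U,V\in\Uca(G)$, set $W:=U\cap V$. Since both $U$ and $V$ are open and compact, $W$ is an open subgroup of each of them, hence of finite index in each. By transitivity of $\sim_c$, it is therefore enough to prove the following auxiliary statement: if $K$ is a profinite group and $H\sleq_o K$, then $F(H)\sim_c F(K)$. Applying this to the pairs $(U,W)$ and $(V,W)$ will yield $F(U)\sim_c F(W)\sim_c F(V)$, as desired.

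For the auxiliary statement I would argue in two directions. On the one hand, $F(K)\cap H$ is closed and normal in $H$, and is pro-nilpotent as a closed subgroup of the pro-nilpotent group $F(K)$; hence $F(K)\cap H\sleq F(H)$. Moreover, the injection $F(K)/(F(K)\cap H)\hookrightarrow K/H$ (of coset spaces) shows that $F(K)\cap H$ has finite index in $F(K)$. On the other hand, $F(H)$ need not be normal in $K$, but it has only finitely many $K$-conjugates, since $N_K(F(H))\sgeq H$ and $[K:H]<\infty$. Therefore the normal core
\[
N:=\bigcap_{k\in K}kF(H)k^{-1}
\]
is closed, normal in $K$, of finite index in $F(H)$, and pro-nilpotent as a closed subgroup of $F(H)$. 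Hence $N\sleq F(K)$. Combining the two, $F(H)\cap F(K)$ contains $N$ and is contained in $F(H)$, so it has finite index in $F(H)$; similarly it contains $F(K)\cap H$ and is contained in $F(K)$, so it has finite index in $F(K)$. This gives $F(H)\sim_c F(K)$.

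For the final clause, fix $g\in G$. Then $gUg^{-1}\in\Uca(G)$, and since the Fitting subgroup is preserved by any topological group isomorphism, $F(gUg^{-1})=gF(U)g^{-1}$. The first part of the lemma applied to $U$ and $gUg^{-1}$ gives $gF(U)g^{-1}\sim_c F(U)$, so $F(U)$ is commensurated by $G$.

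The one genuine technical point is the input that the Fitting subgroup of a profinite group is pro-nilpotent and that pro-nilpotency passes to closed subgroups; both are standard (the latter is immediate from Proposition~\ref{prop:pro-nil}, since normality of each $p$-Sylow is inherited by closed subgroups). Everything else is a routine translation of the finite-group argument into the profinite setting via the open/finite-index dictionary.
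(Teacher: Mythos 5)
Your reduction to comparing $F$ of a profinite group with $F$ of an open subgroup is sound, and the first half of that comparison ($F(K)\cap H$ is a pro-nilpotent closed normal subgroup of $H$, hence lies in $F(H)$ and has finite index in $F(K)$) is exactly the paper's key observation. The gap is in the second half. You claim the normal core $N=\bigcap_{k\in K}kF(H)k^{-1}$ has finite index in $F(H)$ \emph{because} $F(H)$ has only finitely many $K$-conjugates. That inference is invalid: $F(H)$ has infinite index in $K$ in general, and intersecting a closed subgroup with even one of its conjugates can drop the index to infinity (two conjugates of a closed infinite-index subgroup of a profinite group can meet trivially). Since $[F(H):F(H)\cap kF(H)k^{-1}]<\infty$ is precisely the kind of statement the lemma is trying to establish, you cannot assume it; finiteness of the number of conjugates buys you nothing here.

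The conclusion is nonetheless true, and the repair is the reduction the paper actually uses: take the normal core not of $F(H)$ but of $H$ itself, i.e. $W:=\bigcap_{k\in K}kHk^{-1}$, which is open (as a finite intersection of open subgroups) and normal in $K$. For an open \emph{normal} subgroup one has the clean identity $F(W)=F(H)\cap W$: the inclusion $F(H)\cap W\sleq F(W)$ is your first-direction argument, and $F(W)\sleq F(H)\cap W$ holds because $F(W)$ is characteristic in $W\trianglelefteq H$, hence a pro-nilpotent closed normal subgroup of $H$. The same identity with $K$ in place of $H$ gives $F(W)=F(K)\cap W$. Hence $F(W)\sleq F(H)\cap F(K)$ with $[F(H):F(W)]\sleq [H:W]<\infty$ and $[F(K):F(W)]\sleq [K:W]<\infty$, which is the desired commensuration. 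This is in substance the paper's proof, which never leaves the normal case: it picks $W\trianglelefteq_o U$ with $W\sleq U\cap V$ and chains $F(U)\sim_c F(W)\sim_c F(U\cap V)\sim_c F(V)$. Your handling of the final clause (applying the first part to $U$ and $gUg^{-1}$ and using invariance of $F$ under topological isomorphisms) is correct.
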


\begin{proof}
Fix $U\in \Uca(G)$ and suppose first $V\trianglelefteq_oU$. Plainly, $ F(U)\cap V = F(V)\sleq F(U)$, so $F(V)\sim_c F(U)$. For $V\in \Uca(G)$ arbitrary, take $W\trianglelefteq_oU$ such that $W\sleq U\cap V$. By our first observation, $F(U)\sim_c F(W)$. On the other hand, $W\trianglelefteq_o U\cap V$, so $F(W)\sim_cF(U\cap V)$. Hence, $F(U)\sim_cF(U\cap V)$. Reversing the roles of $U$ and $V$, we conclude $F(U\cap V)\sim_cF(V)$, and the lemma follows.
\end{proof}

\subsection{Elementary-free Groups}

When considering a t.d.l.c.s.c. group, we may always reduce to the elementary-free normal section $A_2/A_1$ via Theorem~\rm\ref{thm:acs_E_series}. In view of this, we begin by examining locally of finite rank groups that are elementary-free. 

\begin{lem}\label{lem:pi-adic}
Suppose $G$ is a t.d.l.c.s.c. group that is locally of finite rank and locally pro-$\pi$ for some finite set of primes $\pi$. If $G$ is non-elementary and topologically simple, then $G$ is a non-elementary topologically simple l.c.s.c. $p$-adic Lie group for some $p\in \pi$
\end{lem}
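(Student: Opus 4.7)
The plan is to combine the Melnikov-type result (Corollary~\rm\ref{cor:melnikov}), the structural theorem for locally pro-nilpotent groups (specifically Corollary~\rm\ref{cor:loc_nil_simple}), and the Lazard--Lubotzky--Mann characterization (Theorem~\rm\ref{thm:p-adic_char}). The idea is that ``finite rank $+$ pro-$\pi$'' is just barely strong enough to force local pro-nilpotency, at which point topological simplicity collapses everything to a single prime, and then finite rank on a pro-$p$ compact open subgroup gives the $p$-adic Lie structure.

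First I would fix $U\in \Uca(G)$ of finite rank that is pro-$\pi$. By Corollary~\rm\ref{cor:melnikov}, $U$ is virtually pro-nilpotent, so there is some $V\sleq_o U$ with $V$ pro-nilpotent. Since $V$ is a closed subgroup of the finite rank profinite group $U$, $V$ itself has finite rank, and $V$ is still pro-$\pi$. Because $V$ is also open in $G$, this shows $G$ is locally pro-nilpotent (and still locally of finite rank and locally pro-$\pi$).

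Next, because $G$ is topologically simple and non-elementary, I can apply Corollary~\rm\ref{cor:loc_nil_simple} to the locally pro-nilpotent group $G$: it must be locally pro-$p$ for some prime $p$. I then pick $W\in \Uca(G)$ with $W$ pro-$p$ and consider $W\cap V\in \Uca(G)$. Since $G$ is non-elementary it cannot be discrete (discrete t.d.l.c.s.c.\ groups are elementary), so $V$ is non-discrete and hence $W\cap V$ is a non-trivial pro-$p$ group. But $W\cap V\sleq V$ is also pro-$\pi$, which forces $p\in \pi$.

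Finally, $W\cap V$ is a compact open subgroup of $G$ that is pro-$p$ and, being a closed subgroup of the finite rank profinite group $V$, has finite rank. Theorem~\rm\ref{thm:p-adic_char} then identifies $G$ as a $p$-adic Lie group, completing the proof. I expect no real obstacle here; the only subtlety is step three, where one needs to observe that $G$ cannot be discrete in order to guarantee that the prime $p$ supplied by Corollary~\rm\ref{cor:loc_nil_simple} actually lies in the original set $\pi$.
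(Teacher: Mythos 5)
Your proposal is correct and follows essentially the same route as the paper: Corollary~\ref{cor:melnikov} to get local pro-nilpotency, Corollary~\ref{cor:loc_nil_simple} to get locally pro-$p$, and Theorem~\ref{thm:p-adic_char} to conclude. The paper states this in three lines; your extra steps checking that $p\in\pi$ (via non-discreteness) and that the pro-$p$ compact open subgroup has finite rank are details the paper leaves implicit, and they are verified correctly.
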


\begin{proof} 
Via Corollary~\ref{cor:melnikov}, $G$ is locally pro-nilpotent, and since $G$ is non-elementary, Corollary~\ref{cor:loc_nil_simple} implies $G$ is locally pro-$p$. Applying Theorem~\rm\ref{thm:p-adic_char}, $G$ is a l.c.s.c. $p$-adic Lie group.
\end{proof}

\indent We now find minimal non-trivial normal subgroups and show they are non-elementary topologically simple $p$-adic Lie groups. 

\begin{lem}\label{lem:minlem}
Suppose $G$ is a non-trivial t.d.l.c.s.c. group that is locally of finite rank and locally pro-$\pi$ for some finite set of primes $\pi$. If $G$ is elementary-free, then $G$ has exactly $0<k<\infty$ many minimal non-trivial normal subgroups.
\end{lem}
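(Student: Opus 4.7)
The plan is to combine Theorem~\ref{thm:loc_nil_main} with $p$-adic Lie algebra theory. By Corollary~\ref{cor:melnikov} the finite-rank pro-$\pi$ subgroup $U$ is virtually pro-nilpotent, so after shrinking $U$ we may take $G$ to be locally pro-nilpotent. Being elementary-free and non-trivial forces $G$ to be non-elementary, with ascending elementary series collapsing to $A_1 = \{1\}$ and $A_2 = G$. Theorem~\ref{thm:loc_nil_main} then yields $G \simeq \bigoplus_p (L_p, U_p)$; the finiteness of $\pi$ makes this a direct product $G = L_{p_1} \times \dots \times L_{p_s}$, and Theorem~\ref{thm:p-adic_char} identifies each $L_{p_i}$ as a $p_i$-adic Lie group.

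Next I will show each $L_{p_i}$ is itself elementary-free with semisimple Lie algebra $\mathfrak{l}_i := L(L_{p_i})$. Since $L_{p_i}$ is characteristic in $G$, $\Rad{\Es}(L_{p_i}) = \{1\}$. The direct-product structure also gives $\Res{\Es}(L_{p_i}) = L_{p_i}$: any non-trivial elementary quotient of $L_{p_i}$ extends to one of $G$, contradicting $\Res{\Es}(G) = G$. If $\mathfrak{l}_i$ had a non-trivial solvable radical, its last non-trivial derived term $\mathfrak{a}$ would be an abelian characteristic ideal; exponentiating $\mathfrak{a}$ inside a sufficiently small compact open subgroup of $L_{p_i}$ produces a non-trivial compact abelian subgroup normalized by that open subgroup and centralized by the other factors, hence a non-trivial locally normal abelian subgroup of $G$, contradicting Theorem~\ref{thm:[A]semisimple}. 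Thus $\mathfrak{l}_i = \mathfrak{s}_{i,1} \oplus \cdots \oplus \mathfrak{s}_{i,k_i}$ is a finite direct sum of simple ideals.

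I will then count minimal normal subgroups. A minimal non-trivial closed normal $N \sleq G$ satisfies $N \cap L_{p_i} \in \{\{1\}, N\}$ by minimality. If all intersections were trivial, then $[N, L_{p_i}] \sleq N \cap L_{p_i} = \{1\}$ forces $N \sleq Z(G)$; since each $\mathfrak{l}_i$ is semisimple with trivial center, $Z(G) = \prod Z(L_{p_i})$ is discrete, making $N$ a non-trivial discrete normal subgroup of $G$, contradicting $\Rad{\Es}(G) = \{1\}$. So $N \sleq L_{p_i}$ for a unique $i$; since distinct factors commute, a subgroup of $L_{p_i}$ is normal in $L_{p_i}$ iff normal in $G$, reducing the count to minimal normals within each $L_{p_i}$. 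Writing $S_{i,j}$ for the analytic subgroup of $L_{p_i}$ with Lie algebra $\mathfrak{s}_{i,j}$: $\Res{\Es}(L_{p_i}) = L_{p_i}$ forbids any proper open normal subgroup of $L_{p_i}$, so $L_{p_i}$ equals its own analytic subgroup, and its $Ad$-action preserves each $\mathfrak{s}_{i,j}$, making each $S_{i,j}$ closed normal in $L_{p_i}$. A proper closed normal $M \subsetneq S_{i,j}$ of $L_{p_i}$ satisfies $L(M) \in \{0, \mathfrak{s}_{i,j}\}$ by simplicity: $L(M) = 0$ makes $M$ discrete hence trivial (by $\Rad{\Es}(L_{p_i}) = \{1\}$), while $L(M) = \mathfrak{s}_{i,j}$ makes $M$ a proper open subgroup of $S_{i,j}$, impossible since $S_{i,j}$ is generated by the exponential image of any neighborhood of $0$ in $\mathfrak{s}_{i,j}$ and thus has no proper open subgroup. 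The same analytic-subgroup property shows $S_{i,j}$ is the unique minimal normal with Lie algebra $\mathfrak{s}_{i,j}$, so $L_{p_i}$ has exactly $k_i$ minimal normal subgroups. Summing gives $k = \sum_{i=1}^s k_i$, which is finite and positive.

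The main obstacle will be the bijection between minimal normal subgroups of $L_{p_i}$ and simple factors of $\mathfrak{l}_i$. The essential inputs are that $L_{p_i}$ inherits full elementary-freeness from $G$ (so no discrete component group can permute the simple factors), and that each analytic subgroup $S_{i,j}$ has no proper open subgroup (since $\exp$ of any neighborhood of $0$ in $\mathfrak{s}_{i,j}$ generates $S_{i,j}$). Both ingredients are needed to rule out pathological normal subgroups of $L_{p_i}$ sharing a Lie algebra with but distinct from $S_{i,j}$.
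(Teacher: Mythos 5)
Your reduction is sound up to a point: passing to a pro-nilpotent compact open subgroup via Corollary~\ref{cor:melnikov}, invoking Theorem~\ref{thm:loc_nil_main} to write the elementary-free group $G$ as a finite direct product $L_{p_1}\times\dots\times L_{p_s}$ of elementary-free locally pro-$p_i$ factors, identifying each factor as a $p_i$-adic Lie group, extracting semisimplicity of each $L(L_{p_i})$ from Theorem~\ref{thm:[A]semisimple} (this mirrors the first paragraph of the proof of Proposition~\ref{prop:sgrp_gl}), and localizing every minimal normal subgroup of $G$ inside a single factor. The gap is in the final step, where you pass from simple ideals of the Lie algebra to minimal closed normal subgroups of the group. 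Two of your key assertions fail in the totally disconnected setting. First, there is no canonical ``analytic subgroup'': without connectedness, the subgroup generated by $\exp(\mathfrak{s}_{i,j}\cap V)$ genuinely depends on $V$ (for a suitable lattice $\Lambda$ the groups generated by $\exp(p^n\Lambda)$ form a strictly decreasing chain of compact open subgroups of one another, each generated by the exponential image of a neighbourhood of $0$). Second, the claim that $S_{i,j}$ ``has no proper open subgroup'' is impossible for any non-trivial t.d.l.c. group: by van Dantzig's theorem it has a basis at $1$ of proper compact open subgroups. Consequently your dichotomy for a closed normal $M$ does not close: nothing you have said rules out a strictly decreasing chain of non-discrete closed normal subgroups, all with Lie algebra $\mathfrak{s}_{i,j}$, whose intersection is discrete or trivial — which would destroy both the existence and the finiteness of minimal normal subgroups. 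This is precisely the failure of the ideal/normal-subgroup correspondence that the paper flags at the start of Section 4.

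Closing this gap requires an input that is not Lie-theoretic. The paper's own proof avoids Lie theory entirely: it shows that every filtering family of non-trivial closed normal subgroups of $G$ has non-trivial intersection by applying \cite[Proposition 2.5]{CM11} to compactly generated open subgroups $P_n$ and the normal cores $K_n$ of $U$ in $P_n$, then using profiniteness of $\Aut(K_n)$ in the congruence subgroup topology together with a Frattini argument to contradict triviality of the quasi-centre; Zorn's lemma and a second filtering argument then give existence and finiteness. If you wish to keep your route through Theorem~\ref{thm:loc_nil_main}, you would still need an input of comparable strength — for instance the Caprace--Monod result just cited, or the classification in Proposition~\ref{prop:char_simple} — to show that every non-discrete closed normal subgroup whose Lie algebra contains $\mathfrak{s}_{i,j}$ contains a fixed non-trivial normal subgroup depending only on $\mathfrak{s}_{i,j}$. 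As written, the proposal assumes the hardest part of the lemma.
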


\begin{proof} Put $\mc{N}:=\{N\trianglelefteq G\mid\{1\}\neq N\}$. We claim filtering families in $\mc{N}$ have non-trivial intersection; recall a filtering family of subgroups is such that for any two members of the family there is a third member contained in both. Suppose $(N_{\alpha})_{\alpha\in I}$ is a filtering family in $\mc{N}$ and put $N:=\bigcap_{\alpha\in I}N_{\alpha}$. For contradiction, suppose $N=\{1\}$.\par

\indent Fix $U\in \Uca(G)$ such that $U$ has finite rank and is pro-$\pi$. Via Corollary~\ref{cor:melnikov}, we may take $U$ to be pro-nilpotent. Let $(g_n)_{n\in \omega}$ list a countable dense subset of $G$ and for each $n\sgeq 0$, put $P_n:=\grp{U,g_0,\dots,g_n}$. By \cite[Proposition 2.5]{CM11}, the normal core of $U$ in $P_n$, $K_n:=\bigcap_{h\in P_n} hUh^{-1}$, is such that every filtering family of non-discrete closed normal subgroups of $P_n/K_n$ has non-trivial intersection. All $K_n$ must then be non-trivial. Indeed, else fix $n$ such that $K_n$ is trivial. The family $(N_{\alpha}\cap P_n)_{\alpha\in I}$ is a filtering family of closed normal subgroups of $P_n$ with trivial intersection, hence some $N_{\alpha}\cap P_n$ is discrete. However, this implies $N_{\alpha}$ is discrete contradicting that $G$ is elementary-free. \par

\indent The action of $P_n$ on $K_n$ by conjugation induces a continuous homomorphism $P_n\rightarrow \Aut(K_n)$ when $\Aut(K_n)$ is given the congruence subgroup topology. The group $K_n$ is topologically finitely generated, so $\Aut(K_n)$ is profinite. Theorem~\rm\ref{thm:closure_main} thus implies $P_n/C_{P_n}(K_n)$ is elementary. Furthermore, since $(K_{n})_{n\in \omega}$ is $\subseteq$-decreasing, $(C_{P_n}(K_n))_{n\in \omega}$ is an $\subseteq$-increasing sequence of subgroups, so 
\[
R:=\ol{\bigcup_{n\in \omega}C_{P_n}(K_n)}
\]
is a normal subgroup of $G$. Certainly, $G/R=\bigcup_{n\in \omega} P_nR/R$, and as $P_nR/R\simeq P_n/P_n\cap R$ is a quotient of $P_n/C_{P_n}(K_n)$, Theorem~\rm\ref{thm:closure_main} implies $P_n/P_n\cap R$ is elementary. The group $G/R$ is therefore elementary, and as $G$ is elementary-free, $\bigcup_{n\in \omega}C_{P_n}(K_n)$ is dense in $G$. \par

\indent Since $\bigcup_{n\in \omega}C_{P_n}(K_n)$ is dense and $\Phi(U)$ is open by Proposition~\rm\ref{prop:s_solvable}, there are $g_1,\dots,g_k$ elements of $C_{P_n}(K_n)$ for some sufficiently large $n$ such that $\cgrp{g_0,\dots, g_k} \Phi(U)=U$. We conclude $\cgrp{g_1,\dots,g_k}=U$, and therefore, $C_{P_n}(K_n)$ is open for all sufficiently large $n$. Each element of $K_n$ for sufficiently large $n$ thus has an open centralizer. This, however, contradicts that $G$ is elementary-free since $QZ(G)\sleq \Rad{\Es}(G)$ and $K_n$ is non-trivial. We conclude $\bigcap_{\alpha\in I}N_{\alpha}$ is non-trivial. \par

\indent As a particular case of the above, decreasing $\subseteq$-chains in $\mc{N}$ have non-trivial intersection. The collection $\mc{N}$ thus admits $\subseteq$-minimal elements by Zorn's lemma. It now remains to show there are finitely many minimal non-trivial normal subgroups. \par

\indent Let $\mc{M}$ list all $\subseteq$-minimal elements of $\mc{N}$ and for each $\mc{E}\subseteq\mc{M}$, let $N_{\mc{E}}:=\cgrp{N\mid N\in \mc{E}}$. Taking $\mc{E}\subsetneq \mc{M}$ and $N\in \mc{M}\setminus \mc{E}$, the group $N_{\mc{E}}$ avoids the non-trivial elements in $N$. Indeed, since $N$ commutes with the generators of $N_{\mc{E}}$, $N_{\mc{E}}\cap N=:M$ is central in $N_{\mc{E}}$. Abelian groups are elementary, whereby $M\sleq \Rad{\Es}(N_{\mc{E}})$. The group $\Rad{\Es}(N_{\mc{E}})$ is characteristic in $N_{\mc{E}}$, so
\[
\Rad{\Es}(N_{\mc{E}})\sleq \Rad{\Es}(G)=\{1\}.
\]
Hence, $M=\{1\}$ as required. \par

\indent Consider $\mc{K}:=\{N_{\mc{M}\setminus \mc{F}}\mid\mc{F}\subseteq \mc{M}\text{ is finite}\}$ where, if needed, $N_{\emptyset}:=\{1\}$. The collection $\mc{K}$ is a filtering family of closed normal subgroups, and supposing for contradiction that $\mc{M}$ is infinite, every element of $\mc{K}$ is non-trivial. Thus, $\mc{K}\subseteq \mc{N}$, and since filtering families in $\mc{N}$ have lower bounds in $\mc{N}$, $K:=\bigcap\mc{K}$ is non-trivial. The subgroup $K$ contains a minimal non-trivial normal subgroup of $G$; fix such a subgroup $L$. The construction of $K$ now implies $ K\trianglelefteq N_{\mc{M}\setminus\{L\}}$, and this is absurd since $N_{\mc{M}\setminus\{L\}}$ avoids $L\setminus\{1\}\subseteq K$. We conclude $\mc{M}$ is finite proving the lemma.
\end{proof}

\indent For $G$ as in the above lemma, we denote the collection of minimal non-trivial normal subgroups by $\mc{M}(G)$. 

\begin{lem}\label{lem:simplelem}
Suppose $G$ is a non-trivial t.d.l.c.s.c. group that is locally of finite rank and locally pro-$\pi$ for some finite set of primes $\pi$. If $G$ is elementary-free, then each $N\in \mc{M}(G)$ is a non-elementary topologically simple $p$-adic Lie group for some $p\in \pi$.
\end{lem}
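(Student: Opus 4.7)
The plan is a two-stage argument: first establish that $N$ is topologically simple, then apply Lemma~\ref{lem:pi-adic} to identify it as a $p$-adic Lie group. That $N$ is non-elementary is immediate, for otherwise $N\sleq\Rad{\Es}(G)=\{1\}$ would contradict $N\in\mc{M}(G)$. I would next argue that $N$ is itself elementary-free: the subgroups $\Rad{\Es}(N)$ and $\Res{\Es}(N)$ are characteristic in $N$, hence normal in $G$, and the minimality of $N$ together with $N$ being non-elementary forces $\Rad{\Es}(N)=\{1\}$ and $\Res{\Es}(N)=N$.

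Next, I would verify that $N$ inherits the standing hypotheses on $G$: for any $U\in\Uca(G)$ of finite rank and pro-$\pi$, the intersection $U\cap N$ is a compact open subgroup of $N$ that is closed in $U$, hence is pro-$\pi$ and of finite rank. Lemma~\ref{lem:minlem} applied to $N$ then yields a finite collection $\mc{M}(N)=\{M_1,\dots,M_l\}$ of minimal non-trivial closed normal subgroups of $N$. Conjugation by $G$ induces topological group automorphisms of $N$, which must permute the intrinsically defined set $\mc{M}(N)$. Each normalizer $N_G(M_i)$ is a closed subgroup of $G$ of finite index, hence open, so the kernel $K:=\bigcap_i N_G(M_i)$ of the resulting action $G\to\mathrm{Sym}(\mc{M}(N))$ is an open normal subgroup whose quotient $G/K$ is finite.

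This is where the elementary-freeness of $G$ does its work: since $\Res{\Es}(G)=G$, every elementary quotient of $G$ is trivial, and as finite groups are elementary, $G/K$ must itself be trivial. Hence $K=G$, so each $M_i$ is normal in $G$; the minimality of $N$ in $\mc{M}(G)$ then forces each $M_i=N$, so $\mc{M}(N)=\{N\}$ and $N$ is topologically simple. Lemma~\ref{lem:pi-adic} now applies to $N$ and produces the desired non-elementary topologically simple l.c.s.c. $p$-adic Lie group structure with $p\in\pi$. The main obstacle is controlling the internal normal subgroup structure of $N$; the permutation-action reduction above makes clear that without the prohibition on finite quotients of $G$ supplied by elementary-freeness, one could genuinely have $N$ appearing as a transitive arrangement of several isomorphic simple factors, and it is precisely this prohibition that collapses the factor count to one.
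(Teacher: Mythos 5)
Your proposal is correct and follows essentially the same route as the paper's proof: establish that $N$ is elementary-free, apply Lemma~\ref{lem:minlem} to $N$, kill the induced permutation action on $\mc{M}(N)$ using $\Res{\Es}(G)=G$ to conclude simplicity, and finish with Lemma~\ref{lem:pi-adic}. The only difference is cosmetic --- you spell out the inheritance of the local hypotheses by $N$ more explicitly than the paper does.
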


\begin{proof}
Fix $N\in \mc{M}(G)$. Since the elementary radical and residual of $N$ are characteristic subgroups, it follows $\Rad{\Es}(N)=\{1\}$ and $\Res{\Es}(N)=N$, so $N$ is elementary-free. \par

\indent Via Lemma~\rm\ref{lem:minlem}, $N$ has finitely many non-trivial minimal normal subgroups $K_1,\dots,K_n$. The action of $G$ by conjugation permutes $\{K_1,\dots,K_n\}$ and thereby induces a continuous homomorphism $\psi:G\rightarrow Sym(n)$. Since $\ker(\psi)$ has finite index and $\Res{\Es}(G)=G$, $\psi$ is the trivial homomorphism. Each of $K_1,\dots,K_n$ is then normal in $G$, and the minimality of $N$ implies $K_1=\dots=K_n=N$. Therefore, $N$ is topologically simple.\par

\indent The group $N$ is thus a locally of finite rank, locally pro-$\pi$ t.d.l.c.s.c. group that is elementary free and topologically simple. Applying Lemma~\rm\ref{lem:pi-adic}, $N$ is a l.c.s.c. $p$-adic Lie group for some $p\in \pi$.
\end{proof}

\indent We now consider groups that are not necessarily locally pro-$\pi$ for $\pi$ a finite set of primes. Here we must work harder.
\begin{lem}\label{lem:inf_lfr}
Suppose $U$ is a profinite group with finite rank. Suppose further $U$ has no non-trivial locally normal abelian subgroups and has trivial quasi-centre. If $M\trianglelefteq U$ is pro-$\pi$ for some finite set of primes $\pi$ and $U/M$ is virtually solvable, then $U$ is pro-$\xi$ for some finite set of primes $\xi\supseteq \pi$. 
\end{lem}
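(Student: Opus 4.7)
The plan is to show (i) the centralizer $C_U(M_0)$ is finite for a suitable pro-nilpotent normal $M_0\sleq M$, and (ii) $U/C_U(M_0)$ is pro-$\xi_0$ for some finite set of primes $\xi_0$. Then $U$ is an extension of a finite group by a pro-$\xi_0$ group and hence is pro-$\xi$ for $\xi:=\pi\cup\xi_0\cup\{\text{primes dividing }|C_U(M_0)|\}$, a finite set containing $\pi$. The hypothesis on trivial quasi-centre will not be needed in what follows; the no-locally-normal-abelian hypothesis carries all the work.

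\noindent\textbf{Reduction to pro-nilpotent $M$.} Since $M$ is pro-$\pi$ of finite rank, Corollary~\ref{cor:melnikov} provides an open pro-nilpotent subgroup $M'\sleq M$. As $U$ has finite rank, $M$ is topologically finitely generated, so $M$ has only finitely many open subgroups of each finite index, and thus $M'$ has only finitely many $U$-conjugates. The normal core $M_0:=\bigcap_{u\in U}uM'u^{-1}$ is therefore open in $M$, pro-$\pi$, pro-nilpotent, and normal in $U$; moreover $U/M_0$ is an extension of the finite group $M/M_0$ by the virtually solvable $U/M$, hence remains virtually solvable. By Proposition~\ref{prop:pro-nil}, $M_0=\prod_{p\in\pi}(M_0)_p$, with each Sylow subgroup $(M_0)_p$ characteristic in $M_0$ and hence normal in $U$.

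\noindent\textbf{Finiteness of $C_U(M_0)$.} The centre $Z(M_0)$ is abelian and characteristic in $M_0$, hence normal in $U$, so by the locally normal abelian hypothesis $Z(M_0)=\{1\}$. Consequently $C_U(M_0)\cap M_0=\{1\}$ and the quotient map embeds $C_U(M_0)$ into $U/M_0$, making $C_U(M_0)$ virtually solvable. Fix an open solvable $D\sleq_o C_U(M_0)$. Since $C_U(M_0)$ inherits finite rank from $U$ and is thus topologically finitely generated, it has only finitely many subgroups of each finite index, so the $U$-normal core $D_1:=\bigcap_{u\in U}uDu^{-1}$ is a finite intersection of open subgroups of $C_U(M_0)$, hence open in $C_U(M_0)$, solvable, and normal in $U$. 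If $D_1$ were non-trivial, its last non-trivial derived subgroup would be a non-trivial abelian subgroup characteristic in $D_1$, hence normal in $U$, violating the hypothesis. Therefore $D_1=\{1\}$, and $C_U(M_0)$ is finite.

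\noindent\textbf{Control on $U/C_U(M_0)$ and main difficulty.} For each $p\in\pi$, conjugation gives a continuous homomorphism $U\to\Aut((M_0)_p)$ (with the congruence subgroup topology) whose kernel is $C_U((M_0)_p)$. The essential input is the classical lemma that for a topologically finitely generated pro-$p$ group $P$, the kernel of the restriction map $\Aut(P)\to GL(P/\Phi(P))$ is open and pro-$p$, while its image is a finite subgroup of $GL_d(\mathbb{F}_p)$, where $d=\dim_{\mathbb{F}_p}P/\Phi(P)$; hence $\Aut(P)$ is virtually pro-$p$ and pro-$\xi_p$ for some finite $\xi_p\ni p$. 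Applied to each $(M_0)_p$, this yields $U/C_U((M_0)_p)$ pro-$\xi_p$, and since $\pi$ is finite and $C_U(M_0)=\bigcap_{p\in\pi}C_U((M_0)_p)$, the diagonal embedding $U/C_U(M_0)\hookrightarrow\prod_{p\in\pi}U/C_U((M_0)_p)$ shows $U/C_U(M_0)$ is pro-$\xi_0$ for $\xi_0:=\bigcup_{p\in\pi}\xi_p$, finite. Combining with the preceding step proves $U$ is pro-$\xi$ as claimed. The main obstacle is this last step: verifying virtual pro-$p$-ness of $\Aut((M_0)_p)$ (and continuity of the conjugation homomorphism in the congruence topology) will require a careful citation, e.g.\ from the Dixon--du Sautoy--Mann--Segal treatment of analytic pro-$p$ groups.
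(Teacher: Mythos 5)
Your proof is correct, and it takes a genuinely different route from the paper's. The paper argues from the top down: it invokes Reid's theorem (Theorem~\ref{thm:reidcore}) to produce, for each $p\in\pi$, an open normal subgroup of $U$ with a normal $p'$-Hall subgroup, intersects these to obtain an open $L\sleq U$ with a normal pro-$\pi'$ subgroup $K$, and then uses the hypotheses exactly as you do -- $K$ meets $M$ trivially and so embeds into the virtually solvable $U/M$; an infinite $K$ would produce a non-trivial locally normal abelian subgroup, while a finite $K$ would lie in the quasi-centre -- to conclude $K=\{1\}$, so that $L$, being a subdirect product of the pro-$p_i$ quotients $L/L_{p_i'}$, is pro-$\pi$. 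You instead work from the bottom up: you replace $M$ by an open $U$-normal pro-nilpotent $M_0$ (via Corollary~\ref{cor:melnikov}), kill $C_U(M_0)$ with the same derived-series argument, and control $U/C_U(M_0)$ through the automorphism groups of the Sylow factors of $M_0$. The ``main obstacle'' you flag is not actually an obstacle: for a topologically finitely generated pro-$p$ group $P$ the kernel of $\Aut(P)\rightarrow \Aut(P/\Phi(P))$ is indeed pro-$p$ and open (this is classical -- the finite case is Burnside/Hall, and the pro-$p$ case follows by passing to an inverse limit over a basis of open characteristic subgroups; it appears in Dixon--du Sautoy--Mann--Segal and in Ribes--Zalesskii \S4.4), and the continuity of the conjugation homomorphism into $\Aut(P)$ with the congruence subgroup topology is exactly what the paper itself uses in the proof of Lemma~\ref{lem:minlem}. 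The trade-off is that the paper's route avoids this automorphism-group input and yields the sharper intermediate conclusion that $U$ is virtually pro-$\pi$, whereas your $\xi$ is inflated by the primes dividing the orders of the various $GL_{d_p}(\mathbb{F}_p)$ and of $C_U(M_0)$ -- still a finite set containing $\pi$, so the lemma as stated follows either way. Your side remark that the trivial quasi-centre hypothesis is redundant is also correct: a non-trivial $g\in QZ(U)$ would make $\cgrp{g}$ a non-trivial locally normal abelian subgroup.
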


\begin{proof}
Let $p_1,\dots,p_n$ list $\pi$. Since $U$ has finite rank, every $p_i$-Sylow subgroup of $U$ is topologically finitely generated. Theorem~\rm\ref{thm:reidcore} thereby implies $U/O_{p'_i}(U)$ is virtually pro-$p_i$ for each $i$.  We may thus find $N_i\trianglelefteq_oU$ such that $N_i$ has a normal subgroup $K_i$ that is pro-$p_i'$ and $N_i/K_i$ is pro-$p_i$ - a so called normal \emph{$p_i'$-Hall subgroup} - for each $1\sleq i \sleq n$.\par

\indent Form $L:=\bigcap_{i=1}^nN_i$.  The group $L$ has a normal $p_i'$-Hall subgroup, $L_{p_i'}$, for each $1\sleq i \sleq n$ since $L\sleq N_i$. Setting $K:=\bigcap_{i=1}^nL_{p_i'}$, it must be the case $M\cap K=\{1\}$ as $K$ is a pro-$\pi'$ subgroup. Hence, $KM/M\simeq K/K\cap M\simeq K$, and $K$ is virtually solvable because $LM/M$ is virtually solvable. \par

\indent If $K$ is finite, then $K$ must be trivial since $U$ has trivial quasi-centre. If $K$ is infinite, then we may find $W\trianglelefteq_oL$ such that $W\cap K$ is solvable. Since $W\cap K\trianglelefteq L$, it follows $L$ has a non-trivial abelian normal subgroup contradicting our assumption on $U$. We conclude $K$ is trivial, so $L/K=L$ is pro-$\pi$ proving the lemma.
\end{proof}

\begin{lem}\label{lem:loc_finrank}
Suppose $G$ is a t.d.l.c.s.c. group and $U\in \Uca(G)$ has finite rank. If $G$ is elementary-free, then for each prime $p$, there is a closed characteristic subgroup $N_p$ so that $F(U)_p$ is a compact open subgroup of $N_p$. Additionally, for primes $p\neq q$, $N_p\cap N_q=\{1\}$.
\end{lem}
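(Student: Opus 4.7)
My plan follows the strategy of the proof of Proposition~\ref{prop:loc_nil}, with elementary-freeness invoked at the end to secure the trivial intersection property.

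\emph{Commensurability of $F(U)_p$.} By Lemma~\ref{lem:fitt_comm}, $F(U)$ is commensurated by $G$. Proposition~\ref{prop:pro-nil} gives $F(U)=\prod_p F(U)_p$ with each $F(U)_p$ characteristic in $F(U)$. For $g\in G$, the subgroup $gF(U)_pg^{-1}$ is the unique $p$-Sylow of the commensurate pro-nilpotent subgroup $gF(U)g^{-1}$, and $F(U)_p\cap gF(U)_pg^{-1}$ is the $p$-Sylow of the finite-index pro-nilpotent subgroup $F(U)\cap gF(U)g^{-1}$. Consequently $F(U)_p\cap gF(U)_pg^{-1}$ has finite index in both $F(U)_p$ and $gF(U)_pg^{-1}$, and $F(U)_p$ is commensurated by $G$.

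\emph{Construction of $N_p$ and openness.} In analogy with the $L_p$'s in Proposition~\ref{prop:loc_nil}, I introduce an $\Aut(G)$-invariant, hereditary family $\mathcal{D}_p\subseteq S(G)$ whose members locally normalize $F(W)_q$ for every prime $q\neq p$ and every sufficiently small compact open $W$; set $N_p:=\ol{N_{\mathcal{D}_p}}$, which is a closed characteristic subgroup of $G$. The containment $F(U)_p\sleq N_p$ follows from Proposition~\ref{prop:pro-nil}(2): given $g\in F(U)_p$ and a prescribed open $V\sleq G$, choose a compact open $W\sleq V\cap U$ with $W$ normal in $U$ (possible by van Dantzig's theorem, noting that $W$ may be taken to lie in an open normal basis of $U$). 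Then $F(W)_q$ is characteristic in $W$, normal in $U$, and pro-nilpotent, whence $F(W)_q\sleq F(U)_q$; since $g\in F(U)_p$ centralizes $F(U)_q$, the element $g$ normalizes $F(W)_q$. For the openness of $F(U)_p$ in $N_p$, the crucial observation is that any element of $N_p\cap U$ lying outside $F(U)_p$ would, through the normalizer condition from $\mathcal{D}_p$ applied to cofinally small $W$ together with the commensurability structure of $F(U)$, give rise to a non-trivial locally normal abelian subgroup of $G$. Theorem~\ref{thm:[A]semisimple} rules this out under the elementary-free hypothesis, so $N_p\cap U\sleq F(U)_p$, and $F(U)_p$ is a compact open subgroup of $N_p$.

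\emph{Trivial intersection.} For distinct primes $p\neq q$, the closed characteristic subgroup $N_p\cap N_q$ intersects the open subgroup $U$ in $F(U)_p\cap F(U)_q=\{1\}$ by the direct product decomposition of $F(U)$. Hence $N_p\cap N_q$ is discrete. Being countable, discrete, closed, and characteristic, it is an elementary subgroup, and elementary-freeness forces $N_p\cap N_q\sleq\Rad{\Es}(G)=\{1\}$.

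The main obstacle is calibrating the definition of $\mathcal{D}_p$ precisely: it must be broad enough to ensure $F(U)_p\sleq N_{\mathcal{D}_p}$ (using the $p$-$q$ commutation in the pro-nilpotent $F(U)$), yet tight enough that $N_p\cap U$ does not strictly exceed $F(U)_p$. The latter constraint is where the elementary-free assumption (via Theorem~\ref{thm:[A]semisimple}) enters essentially.
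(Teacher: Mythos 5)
Your overall architecture (a hereditary, invariant family $\mathcal{D}_p$ of subgroups that locally normalize the $q$-Sylow parts of Fitting subgroups for $q\neq p$, whose closed core is $N_p$, with $F(U)_p\sleq N_p$ coming from Proposition~\ref{prop:pro-nil}(2)) matches the paper's construction of $\mc{H}_p$. But the step you yourself flag as the crux --- openness of $F(U)_p$ in $N_p$ --- is a genuine gap, and the mechanism you sketch for it is not the one that works. You assert that an element of $N_p\cap U$ outside $F(U)_p$ would ``give rise to a non-trivial locally normal abelian subgroup''; there is no such direct implication, and in fact the containment $N_p\cap U\sleq F(U)_p$ you are aiming for need not hold: the paper only obtains that $V:=U\cap N_p$ is \emph{virtually} pro-nilpotent with Fitting subgroup $F(V)=F(U)_p$, so $F(U)_p$ is open but possibly proper in $V$. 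The actual route is structural: one shows $F(V)=F(U)_p$, notes $V/F(V)$ is virtually solvable by Theorem~\ref{thm:finrank} and that $V$ has trivial quasi-centre and no locally normal abelian subgroups (elementary-freeness plus Theorem~\ref{thm:[A]semisimple}), then applies Lemma~\ref{lem:inf_lfr} to conclude $V$ is pro-$\xi$ for a \emph{finite} set of primes $\xi$, and finally Corollary~\ref{cor:melnikov} (Melnikov) to get $V$ virtually pro-nilpotent, whence $F(V)$ is open. None of this machinery appears in your sketch, and it is exactly where the finite-rank hypothesis does its work.

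There is also an ordering problem that your plan inherits. You derive $N_p\cap N_q=\{1\}$ from openness (via $N_p\cap N_q\cap U\sleq F(U)_p\cap F(U)_q=\{1\}$), but the paper's proof of openness already \emph{uses} the trivial-intersection statement: identifying $F(V)$ with $F(U)_p$ requires knowing that $N_p$ avoids $F(U)_q$ for $q\neq p$, which follows from $F(U)_q\sleq N_q$ and $N_p\cap N_q=\{1\}$. The paper therefore proves trivial intersection first, by a different argument: for the (non-closed) cores $M_p, M_r$, every compactly generated subgroup $P_i$ of $M_p\cap M_r$ containing $U$ lies in both $\mc{H}_p$ and $\mc{H}_r$, forcing the image of a Fitting subgroup in $P_i/\Rad{\Es}(P_i)$ to be simultaneously pro-$p$ and pro-$r$, hence trivial, so that $P_i$ is elementary by Theorems~\ref{thm:finrank} and~\ref{thm:locsolv_e}; elementary-freeness then kills $\ol{M_p\cap M_r}$, the normal subgroups $M_p$ and $M_r$ commute, and $N_p\cap N_q$ is central in $N_q$, hence trivial. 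To repair your proposal you would need either to supply this argument or to break the circularity some other way, in addition to filling in the openness step via Lemma~\ref{lem:inf_lfr} and Melnikov's theorem. Your opening paragraph on commensurability of $F(U)_p$ is correct but is not where the difficulty lies.
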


\begin{proof}
Fix a prime $p$. Define $\mc{H}_p\subseteq S(G)$ by $C\in \mc{H}_p$ if and only if for all $L\in \Uca(G)$ there is $W\sleq_oL$ and $(W_i)_{i\in \omega}$ a normal basis at $1$ for $W$ such that 
\[
C\sleq \bigcap_{i\in \omega}\bigcap_{q\in \mb{P}\setminus\{p\}}N_G\left(F(W)_{q}\cap W_i\right)
\]
where $F(W)$ is the Fitting subgroup of $W$. The set $\mc{H}_p$ is hereditary, and it is easy to check $\mc{H}_p$ is also conjugation invariant. We may thus form the $\mc{H}_p$-core $N_{\mc{H}_p}=:M_p$.\par

\indent Let $\{h_i\}_{i\in \omega}$ list a countable dense subset of $M_p\cap M_{r}$ for $p\neq r$ primes. Putting $P_i:=\grp{U,h_0,\dots,h_i}$, it follows from the construction of $N_p$ that $P_i\in \mc{H}_p$, so there is $W\sleq_oU$ for which 
\[
P_i \sleq \bigcap_{q\in \mb{P}\setminus \{p\}}N_G\left(F(W)_{q}\right).
\]
Therefore, $F(W)_{q}\sleq \Rad{\Es}(P_i)$ for each $q\neq p$. Letting $\pi:P_i\rightarrow P_i/\Rad{\Es}(P_i)$ be the usual projection, $\pi(F(W))$ is pro-$p$.\par

\indent On the other hand, $P_i\in \mc{H}_r$, whereby we may find $V\sleq_oU$ so that $\pi(F(V))$ is pro-$r$. The group $\pi(F(V))\cap \pi(F(W))$ is then both pro-$p$ and pro-$r$ and thereby is trivial. Lemma~\ref{lem:fitt_comm}, however, tells us that $\pi(F(V))\sim_c\pi(F(W))$. The subgroup $\pi(F(W)$ is thus finite, and we infer $\pi(F(W))$ is trivial because $\pi(F(W))\trianglelefteq \pi(W)$ and $P_i/\Rad{\Es}(P_i)$ has trivial quasi-centre. In view of Theorem~\ref{thm:finrank}, $\pi(W)$ is virtually solvable, and applying Theorem~\ref{thm:locsolv_e}, we conclude $P_i/\Rad{\Es}(P_i)$ is elementary. The maximality of the elementary radical implies $P_i$ is elementary.\par

\indent We see $M_p\cap M_r\sleq \bigcup_{i\in \omega}P_i$, and the latter group is elementary. The group $\ol{M_p\cap M_r}$ is therefore an elementary normal subgroup of $G$ via Theorem~\ref{thm:closure_main}. Since we assumed $G$ is elementary free, it must be the case that $M_p\cap M_r=\{1\}$. So $M_p\sleq C_{G}(M_r)$, and therefore, $N_p:=\ol{M_p}$ centralizes $N_r:=\ol{M_r}$. The group $N_p\cap N_r$ is then central in $N_r$. Since abelian groups are elementary, $N_p\cap N_r=\{1\}$ because $G$ is elementary free. We have thus verified the second claim of the lemma.\par

\indent Let us now argue $F(U)_p$ is a compact open subgroup of $N_p$. 

\begin{claim*}
$F(U)_p\sleq N_p$.
\end{claim*}

\begin{proof}[Proof of claim.]
Fix $u\in F(U)_{p}$. For $C\in \mc{H}_p$ and $L\in \Uca(G)$, we may find $W\sleq_oL$ and $(W_i)_{i\in \omega}$ a normal basis at $1$ for $W$ such that 
\[
C\sleq \bigcap_{i\in \omega}\bigcap_{q\in \mb{P}\setminus\{p\}}N_G\left(F(W)_{q}\cap W_i\right).
\]

\indent Since $F(W)\sim_c F(U)$ by Lemma~\rm\ref{lem:fitt_comm}, there is $l\in \omega$ so that $F(W_l)=F(W)\cap W_l\sleq F(U)$. Setting $\tilde{W}:=W_l$, we see $\tilde{W}\sleq_oL$ and $(W_{i+l})_{i\in \omega}$ is a normal basis at $1$ for $\tilde{W}$. Additionally,
\[
F(W)_{q}\cap \tilde{W}=F(W)_{q}\cap F(\tilde{W})=F(\tilde{W})_{q},
\]
and hence,
\[
C\sleq \bigcap_{l\in \omega}\bigcap_{q\in \mb{P}\setminus\{p\}}N_G\left(F(\tilde{W})_{q}\cap W_{i+l}\right).
\]
Since $F(\tilde{W})_{q}\sleq F(U)_{q}$, the element $u$ centralizes $F(\tilde{W})_{q}$. Therefore,
\[
\cgrp{u,C}\sleq \bigcap_{l\in \omega}\bigcap_{q\in \mb{P}\setminus\{p\}}N_G\left(F(\tilde{W})_{q}\cap W_{i+l}\right) ,
\]
proving the claim.
\end{proof}

\indent We now consider $V:=U\cap N_p$. As $V\trianglelefteq U$, we infer $F(V)\sleq F(U)$, and as the group $N_p$ avoids $F(U)_q$ for any $q\neq p$, the claim implies $F(V)=F(U)_p$.  Since the group $V$ is a finite rank profinite group, $V/F(V)$ is virtually solvable. Additionally, since $G$ is elementary-free, $N_p$ has trivial elementary radical. The compact open subgroup $V$ of $N_p$ thus has trivial quasi-centre and via Theorem~\ref{thm:[A]semisimple}, has no non-trivial locally normal abelian subgroups. Lemma~\ref{lem:inf_lfr} then implies $V$ is indeed pro-$\xi$ for $\xi$ some finite set of primes. In view of Corollary~\ref{cor:melnikov}, we conclude $V$ is virtually pro-nilpotent. The group $F(V)=F(U)_p$ is therefore a compact open subgroup of $N_p$ verifying the lemma.
\end{proof}

\subsection{Simple normal Subgroups}

We here examine the topologically simple normal subgroups obtained in the previous section. Our investigations require a general lemma; the proof of which invokes a subsidiary fact from the literature.

\begin{lem}[{\cite[Lemma 5.3]{BM96}} ]\label{lem:BM}
Let $k$ be a local field, $G$ be a simply connected almost $k$-simple $k$-algebraic group, and $H$ a locally compact group. Then any non-trivial continuous homomorphism $\pi:G(k)\rightarrow H$ is proper - i.e. the preimage of any compact set is compact.
\end{lem}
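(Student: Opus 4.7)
My plan is to first identify the kernel of $\pi$ and then establish properness. Since $\ker\pi$ is a closed normal subgroup of $G(k)$, the first step would be to invoke the Tits--Margulis classification of normal subgroups for simply connected almost $k$-simple $k$-algebraic groups over a local field (see, e.g., \cite{Mar91}): every closed normal subgroup of $G(k)$ is either contained in the finite center $Z(G(k))$ or contains $G(k)^+$. Combined with the Kneser--Tits theorem over local fields, which forces $G(k)=G(k)^+$ when $G$ is isotropic (while the anisotropic case trivially makes $G(k)$ compact), this will show that $\ker\pi$ is either all of $G(k)$ or is compact. Since $\pi$ is non-trivial, $\ker\pi$ must be compact.

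The main task is then to show that the image $\pi(G(k))$ is closed in $H$. I would set $L:=\overline{\pi(G(k))}$, which is a closed, hence locally compact, subgroup of $H$, and consider the induced map $\bar\pi:G(k)/\ker\pi\to L$, which is continuous, injective, and has dense image. The approach would be by contradiction: if $\bar\pi$ is not surjective, then using a Cartan-type decomposition $G(k)=KAK$ of the simply connected isotropic almost $k$-simple group together with the dynamics of conjugation by the split torus $A$ on unipotent subgroups, one extracts from a divergent sequence $(g_n)\subseteq G(k)$ whose image in $L$ is bounded a non-trivial compact subgroup of $L$ normalized by $\bar\pi(G(k)/\ker\pi)$. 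Pulling back via $\pi$ would then produce a closed normal subgroup of $G(k)$ strictly larger than the center yet not all of $G(k)$, contradicting the normal subgroup theorem already invoked.

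The hard part will be this closedness step: it requires delicate control on the limiting behavior of sequences in $G(k)$ under a continuous homomorphism to an arbitrary locally compact group, and it uses in an essential way the rigidity of $G(k)$ as a simply connected isotropic almost simple group over the local field $k$. One pragmatic alternative would be to cite the closedness conclusion from the same normal subgroup literature (cf.\ \cite{Mar91}), but the conceptual content is the same.

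Once $\pi(G(k))$ is known to be closed in $H$, the continuous surjection $\pi:G(k)\to \pi(G(k))$ between $\sigma$-compact locally compact groups with compact kernel will factor through a topological isomorphism $G(k)/\ker\pi\simeq \pi(G(k))$ by the open mapping theorem. For any compact $C\subseteq H$, the set $C\cap \pi(G(k))$ is compact, its preimage in $G(k)/\ker\pi$ is then compact, and multiplying by the compact $\ker\pi$ yields that $\pi^{-1}(C)$ itself is compact, giving properness of $\pi$.
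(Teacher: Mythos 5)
First, a point of comparison: the paper does not prove this lemma at all --- it is quoted verbatim from Burger--Mozes \cite[Lemma 5.3]{BM96} as a ``subsidiary fact from the literature'' --- so there is no in-paper argument to measure you against. Judged on its own, your architecture is reasonable and two of your three steps are sound: the kernel analysis (Tits's normal subgroup theorem together with Kneser--Tits over local fields, the anisotropic case being trivial since $G(k)$ is then compact) correctly yields that $\ker\pi$ is central, hence finite and compact; and the final step (closed image plus compact kernel plus the open mapping theorem for $\sigma$-compact locally compact groups implies properness) is standard.

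The gap is in the middle step, which you yourself flag as the hard part. The mechanism you describe --- extracting from a divergent sequence with bounded image ``a non-trivial compact subgroup of $L$ normalized by $\bar\pi(G(k)/\ker\pi)$'' and then ``pulling back'' to contradict the normal subgroup theorem --- does not work as stated. You do not explain how such a compact subgroup of $L$ arises from the torus dynamics, and even granting one, its preimage $\pi^{-1}(N)$ is a closed normal subgroup of $G(k)$ containing $\ker\pi$, so the normal subgroup theorem only says it is central or all of $G(k)$; the latter branch merely asserts that $\pi(G(k))$ has compact closure, which is not an immediate contradiction without essentially re-proving the lemma. The correct way to use the Cartan decomposition is more direct and makes the closedness detour unnecessary: if $\pi^{-1}(C)$ were non-compact for some compact $C$, choose $g_n=k_na_nk_n'\in\pi^{-1}(C)$ with $k_n,k_n'$ in a fixed compact $K$ and $a_n\to\infty$ in the closed positive chamber of a maximal $k$-split torus; after passing to subsequences, $\pi(a_n)\to h_0$ for some $h_0\in H$, and for a root $\alpha$ with $|\alpha(a_n)|\to\infty$ and $u$ in the root subgroup $U_\alpha(k)$ one has $\pi(a_n)^{-1}\pi(u)\pi(a_n)=\pi(a_n^{-1}ua_n)\to\pi(1)=1$, whence $\pi(u)=1$. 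Thus $U_\alpha(k)\sleq\ker\pi$, contradicting that $\ker\pi$ is central. Substituting this for your second step turns the outline into a complete proof of properness in one stroke.
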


With the above lemma in hand, we obtain a general fact concerning topologically simple l.c.s.c. $p$-adic Lie groups.

\begin{lem}\label{lem:p-adic_inj} 
Suppose $G$ is a t.d.l.c.s.c. group, $N$ is a non-elementary topologically simple l.c.s.c. $p$-adic Lie group, and $\psi:N\rightarrow G$ is a non-trivial continuous homomorphism. Then $\psi$ is a closed map, and $N\simeq \psi(N)$. 
\end{lem}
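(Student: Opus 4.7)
The plan is to reduce to the algebraic setting of Corollary~\ref{cor:simple_type} and then invoke Lemma~\ref{lem:BM} after passing to a simply connected cover. Specifically, by Corollary~\ref{cor:simple_type} we identify $N$ with $H(k)^+$ for an adjoint absolutely simple isotropic $k$-algebraic group $H$, where $k$ is a finite extension of $\Qp$. Since $H$ is adjoint, it is not simply connected in general, so Lemma~\ref{lem:BM} does not apply to $H$ directly. We therefore introduce the simply connected $k$-algebraic cover $\widetilde{H}$ of $H$ together with its central $k$-isogeny $\tilde{\pi}:\widetilde{H}\rightarrow H$. The group $\widetilde{H}$ is simply connected, almost $k$-simple, and isotropic, and by \cite[2.3.4]{Mar91} (invoked in the proof of Theorem~\ref{thm:simple_type}) we have $\tilde{\pi}(\widetilde{H}(k))\supseteq H(k)^+=N$, with finite kernel equal to the $k$-points of the centre of $\widetilde{H}$.

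The next step is to compose and apply the key lemma. The map $\psi\circ\tilde{\pi}:\widetilde{H}(k)\rightarrow G$ is a continuous homomorphism into a locally compact group, and it is non-trivial since $\psi$ is non-trivial on $N$ and $\tilde{\pi}$ surjects onto $N$. Lemma~\ref{lem:BM} then applies to give that $\psi\circ\tilde{\pi}$ is proper. I would then observe that $\tilde{\pi}:\widetilde{H}(k)\rightarrow N$, being a continuous surjection with finite kernel between t.d.l.c.s.c.\ groups, is itself proper. From this I deduce that $\psi$ is proper by a direct chase: for $K\subseteq G$ compact,
\[
\psi^{-1}(K)=\tilde{\pi}\bigl(\tilde{\pi}^{-1}(\psi^{-1}(K))\bigr)=\tilde{\pi}\bigl((\psi\circ\tilde{\pi})^{-1}(K)\bigr),
\]
which is the continuous image of a compact set, hence compact in $N$.

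Finally, I conclude as follows. A proper continuous map between locally compact Hausdorff spaces is closed, so $\psi$ is a closed map, and in particular $\psi(N)$ is a closed subgroup of $G$. For injectivity, $\ker(\psi)$ is a closed normal subgroup of $N$; since $N$ is topologically simple and $\psi$ is non-trivial, $\ker(\psi)=\{1\}$. Thus $\psi$ is a continuous injective closed homomorphism, giving $N\simeq \psi(N)$ as topological groups. The main obstacle is the passage from the adjoint group $H$ to its simply connected cover $\widetilde{H}$, which is needed to place the setup in the hypothesis of Lemma~\ref{lem:BM}; the rest of the argument is a straightforward properness chase combined with topological simplicity.
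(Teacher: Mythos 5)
Your proposal is correct and follows essentially the same route as the paper: pass to the simply connected cover $\widetilde{H}$, apply Lemma~\ref{lem:BM} to the composite $\psi\circ\tilde{\pi}$, and deduce properness of $\psi$ from surjectivity of $\tilde{\pi}$ onto $N=H(k)^+$. The only cosmetic difference is that you work over a finite extension $k$ via Corollary~\ref{cor:simple_type} while the paper stays over $\Qp$ via Theorem~\ref{thm:simple_type}; both are compatible with Lemma~\ref{lem:BM}, and your explicit properness chase and the injectivity argument from topological simplicity are exactly what the paper leaves implicit.
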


\begin{proof} Applying Theorem~\rm\ref{thm:simple_type}, we may assume $N=S(\Qp)^+$ for $S$ some adjoint $\Qp$-simple isotropic $\Qi{p}$-algebraic group. Take $\tilde{S}$ the simply connected covering of $S$ and let $\tilde{p}:\tilde{S}\rightarrow S$ be the covering $\Qp$-isogeny, \cite[(1.4.11)]{Mar91}. Since isogenies carry almost $\Qi{p}$-simple factors to almost $\Qi{p}$-simple factors, $\tilde{S}$ is an almost $\Qi{p}$-simple $\Qi{p}$-algebraic group.\par

\indent The group $\tilde{S}$ is simply connected, hence $\tilde{S}(\Qi{p})=\tilde{S}(\Qi{p})^+$ via \cite[(2.3.1)]{Mar91}. We thus see 
\[
\tilde{S}(\Qi{p})=\tilde{S}(\Qi{p})^+\xrightarrow{\tilde{p}}S(\Qi{p})^+\xrightarrow{\psi} G.
\]
In view of \cite[(2.3.4)]{Mar91}, $\psi\circ \tilde{p}:\tilde{S}(\Qi{p})\rightarrow G$ is a non-trivial continuous homomorphism, whereby Lemma~\rm\ref{lem:BM} implies $\psi\circ \tilde{p}$ is proper. It follows $\psi$ is also proper since $\tilde{p}\upharpoonright_{\tilde{S}(\Qp)}$ is surjective. The map $\psi:N\rightarrow G$ is therefore a closed map, and $N\simeq \psi(N)$. 
\end{proof}

As an immediate consequence of Lemma~\rm\ref{lem:p-adic_inj}, we obtain information on certain quasi-products. 

\begin{lem}\label{lem:p-adic_quasi}
Suppose $G$ is a t.d.l.c.s.c. group and $G$ is a quasi-product with quasi-factors $N$ and $M$ where $N$ is a non-elementary topologically simple $p$-adic Lie group. Then $G\simeq N\times M$.
\end{lem}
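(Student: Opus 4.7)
\medskip

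\noindent\textbf{Proof plan.} The quasi-product hypothesis gives that $N$ and $M$ are closed normal subgroups of $G$, that the multiplication map $m\colon N\times M\to G$ is a continuous injective homomorphism, and that $m(N\times M)=NM$ is dense in $G$. Injectivity of $m$ forces $N\cap M=\{1\}$, and since $N$ and $M$ are both normal in $G$, the commutator $[N,M]$ lies in $N\cap M=\{1\}$; hence $N$ and $M$ centralise each other, so $m$ is indeed a group homomorphism from the direct product.

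The plan is to upgrade density of $NM$ in $G$ to equality $NM=G$ by invoking Lemma~\ref{lem:p-adic_inj}. Let $q\colon G\to G/M$ be the usual projection and consider the continuous homomorphism $\psi:=q\upharpoonright_N\colon N\to G/M$. Its kernel is $N\cap M=\{1\}$, so $\psi$ is non-trivial. Since $G/M$ is a t.d.l.c.s.c.\ group and $N$ is a non-elementary topologically simple l.c.s.c.\ $p$-adic Lie group, Lemma~\ref{lem:p-adic_inj} applies and $\psi$ is a closed map with $N\simeq \psi(N)=NM/M$. In particular, $NM/M$ is closed in $G/M$. On the other hand, the density of $NM$ in $G$ implies $NM/M$ is dense in $G/M$; a closed dense subset is the whole space, so $NM/M=G/M$, and therefore $NM=G$.

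It remains to show that the continuous bijective homomorphism $m\colon N\times M\to G$ is a topological isomorphism. Both $N\times M$ and $G$ are l.c.s.c., hence Polish, so the open mapping theorem for Polish groups (equivalently, for $\sigma$-compact locally compact groups) guarantees that $m$ is open, and thus a topological isomorphism. This yields $G\simeq N\times M$.

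The mildly delicate point is the application of Lemma~\ref{lem:p-adic_inj} to the composition with the quotient map $G\to G/M$ (rather than directly to the inclusion $N\hookrightarrow G$), since it is precisely this move that converts the a priori weak statement ``$NM$ is dense'' into the strong statement ``$NM$ is closed''. Once surjectivity is in hand, the remainder is a routine application of the open mapping theorem.
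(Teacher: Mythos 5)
Your proof is correct and is essentially the argument the paper intends: the lemma is stated there without proof as an ``immediate consequence'' of Lemma~\ref{lem:p-adic_inj}, and applying that lemma to the composite $N\hookrightarrow G\to G/M$ to upgrade density of $NM$ to $NM=G$, then concluding with the open mapping theorem for $\sigma$-compact locally compact groups, is precisely that intended route. No gaps.
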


\indent We note one further consequence.

\begin{lem}\label{lem:finindex}
Suppose $G$ is a t.d.l.c.s.c. group and $N\trianglelefteq G$ is a non-elementary topologically simple $p$-adic Lie group. Then $C_G(N)N$ is a finite index closed normal subgroup of $G$.
\end{lem}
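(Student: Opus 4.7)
The plan is to reduce the statement to an application of Theorem~\ref{thm:index} (finiteness of $\Aut(N)/\Inn(N)$) for the index, and Lemma~\ref{lem:p-adic_inj} (closedness of nontrivial continuous homomorphisms out of $N$) for the closedness. The key observation driving both steps is that $Z(N)=\{1\}$: since abelian t.d.l.c.s.c. groups are elementary by Theorem~\ref{thm:locsolv_e}, the non-elementary hypothesis forces $N$ to be non-abelian, and then the closed normal subgroup $Z(N)$ must be trivial by topological simplicity.

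For the finite-index claim, I will consider the conjugation action $\phi\colon G\to \Aut(N)$. Its kernel is $C_G(N)$ and, because $Z(N)=\{1\}$, we have $\phi(N)=\Inn(N)$ and $\phi^{-1}(\Inn(N))=NC_G(N)$. Since $\Inn(N)\sleq \phi(G)\sleq \Aut(N)$, the quotient $\phi(G)/\Inn(N)$ injects into $\Aut(N)/\Inn(N)$, which is finite by Theorem~\rm\ref{thm:index}. Consequently
\[
[G:NC_G(N)]=[\phi(G):\phi(N)]<\infty.
\]

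For closedness, I will pass to the t.d.l.c.s.c.\ quotient $\pi\colon G\to G/C_G(N)$ and examine the restriction $\pi\upharpoonright_N\colon N\to G/C_G(N)$. This is a continuous homomorphism with kernel $N\cap C_G(N)=Z(N)=\{1\}$, so it is nontrivial. Lemma~\rm\ref{lem:p-adic_inj} then applies directly and gives that $\pi\upharpoonright_N$ is a closed map; in particular its image $\pi(N)=NC_G(N)/C_G(N)$ is closed in $G/C_G(N)$. Pulling back, $NC_G(N)=\pi^{-1}(\pi(N))$ is closed in $G$. Normality is automatic because both $N$ and $C_G(N)$ are normal in $G$.

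The only nontrivial step is the closedness step; the temptation is to argue directly from the continuous injection $N\times C_G(N)\to G$, but in the t.d.l.c.s.c. category a continuous bijective homomorphism from a product need not be closed without invoking the $p$-adic Lie structure of $N$. Routing through $G/C_G(N)$ and Lemma~\rm\ref{lem:p-adic_inj} is what makes the simple-type hypothesis on $N$ do the work.
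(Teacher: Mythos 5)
Your proof is correct and follows essentially the same route as the paper: the paper gets closedness by observing that $\overline{C_G(N)N}$ is a quasi-product of $C_G(N)$ and $N$ and invoking Lemma~\ref{lem:p-adic_quasi}, which is itself an immediate consequence of Lemma~\ref{lem:p-adic_inj} via exactly the quotient-by-$C_G(N)$ argument you spell out, and it gets the finite index from the same embedding $G/C_G(N)N\hookrightarrow \Aut(N)/\Inn(N)$ together with Theorem~\ref{thm:index}. No gaps.
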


\begin{proof}
Put $H:=\ol{C_G(N)N}$. Since $N$ is a non-elementary topologically simple $p$-adic Lie group, $H$ is a quasi-product of $C_G(N)$ and $N$, so via Lemma~\rm\ref{lem:p-adic_quasi}, $C_G(N)N=H$. The subgroup $C_G(N)N$ is thus a closed normal subgroup of $G$.\par

\indent For the finite index claim, observe
\[
G/C_G(N)N\hookrightarrow \Aut(N)/\Inn(N).
\]
The group $\Aut(N)/\Inn(N)$ is finite by Theorem~\rm\ref{thm:index}, hence the group $G/C_G(N)N$ is finite verifying the lemma.
\end{proof}

\subsection{Decomposition Theorems}

\begin{thm}\label{thm:pi_lfr}
Suppose $G$ is a t.d.l.c.s.c. group that is locally of finite rank and locally pro-$\pi$ for some finite set of primes $\pi$. Then either $G$ is elementary or the ascending elementary series
\[
 \{1\}\sleq A_1\sleq A_2\sleq G
\]
is such that 
\begin{enumerate}[(1)]
\item~$A_1$ is elementary and $G/A_2$ is finite; and

\item~ $A_2/A_1\simeq N_1\times\dots\times N_k$ for $0< k<\infty$ where each $N_i$ is a non-elementary compactly generated topologically simple $p$-adic Lie group of adjoint simple type for some $p\in \pi$.
\end{enumerate}
\end{thm}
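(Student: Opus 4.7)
The plan is to work with the ascending elementary series $\{1\}\sleq A_1\sleq A_2\sleq G$ of Theorem~\ref{thm:acs_E_series}: assuming $G$ is non-elementary, I will show that the elementary-free section $A_2/A_1$ realizes the claimed direct product of topologically simple $p$-adic Lie groups and that the elementary quotient $G/A_2$ is in fact finite. Both the locally-of-finite-rank and locally pro-$\pi$ properties pass to closed subgroups and Hausdorff quotients, so $A_2/A_1$ inherits them and is a non-trivial elementary-free t.d.l.c.s.c.\ group satisfying the hypotheses of Lemmas~\ref{lem:minlem} and \ref{lem:simplelem}. These yield a finite non-empty set $\mc{M}(A_2/A_1)=\{N_1,\ldots,N_k\}$ of minimal non-trivial closed normal subgroups, each $N_i$ a non-elementary topologically simple l.c.s.c.\ $p$-adic Lie group for some $p\in\pi$. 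Theorem~\ref{thm:simple_type} then places each $N_i$ in adjoint simple type; as a by-product each $N_i$ has trivial centre and is compactly generated, since $S(\Qp)$ is compactly generated and $[S(\Qp):S(\Qp)^+]<\infty$ by \cite[2.3.1]{Mar91}.

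Next I would assemble the direct product. For $i\neq j$, the subgroup $N_i\cap N_j$ is a closed normal subgroup of $A_2/A_1$ properly contained in each of $N_i,N_j$, hence trivial by minimality, so $[N_i,N_j]\sleq N_i\cap N_j=\{1\}$. Applying Lemma~\ref{lem:p-adic_quasi} inductively to the closures $\ol{N_1\cdots N_j}$---each a quasi-product of a non-elementary topologically simple $p$-adic Lie group with the preceding (closed, normal) product---shows that $H:=N_1\cdots N_k$ is already closed and that the multiplication map $N_1\times\cdots\times N_k\to H$ is a topological isomorphism. To obtain $H=A_2/A_1$, I first note $C_{A_2/A_1}(H)=\{1\}$: otherwise this centralizer would contain some $N_i$ (every non-trivial closed normal subgroup contains a member of $\mc{M}(A_2/A_1)$, by the Zorn argument in the proof of Lemma~\ref{lem:minlem}), forcing $N_i$ to be abelian, a contradiction. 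The conjugation action of $A_2/A_1$ permutes $\mc{M}(A_2/A_1)$, giving a homomorphism to $Sym(k)$ with kernel $K\supseteq H$. The induced map $K\to\prod_i\Aut(N_i)$ is injective (its kernel lies in $C_{A_2/A_1}(H)=\{1\}$) and carries $H$ isomorphically onto $\prod_i\Inn(N_i)$ (using trivial centres), so $K/H$ embeds in $\prod_i\Aut(N_i)/\Inn(N_i)$. This last group is finite by Theorem~\ref{thm:index}, and $(A_2/A_1)/K$ is finite as a subgroup of $Sym(k)$, so $(A_2/A_1)/H$ is finite and hence elementary; since $A_2/A_1$ is elementary-free, $H=A_2/A_1$.

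Finally, I would show $G/A_2$ is finite by analyzing the conjugation action $\rho:G\to\Aut(A_2/A_1)$. Its image contains $\Inn(A_2/A_1)\simeq A_2/A_1$ (trivial centre), and the preceding paragraph shows $\Aut(A_2/A_1)/\Inn(A_2/A_1)$ is finite. For the kernel, $Z(A_2/A_1)=\{1\}$ gives $\ker\rho\cap A_2=A_1$, so $\ker\rho/A_1$ embeds in $G/A_2$, which is elementary; thus $\ker\rho$ is an elementary closed normal subgroup of $G$, so $\ker\rho\sleq\Rad{\Es}(G)=A_1$, and $\ker\rho=A_1$. Then $G/A_2$ injects into $\Aut(A_2/A_1)/\Inn(A_2/A_1)$ and is therefore finite. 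The main technical hurdle is in the second paragraph: Theorem~\ref{thm:index} provides the indispensable finiteness of outer automorphism groups, but one must combine it carefully with the permutation action on $\mc{M}(A_2/A_1)$, the trivial-centre property of $\Qp$-points of adjoint simple groups, and the definition of the elementary radical and residual to upgrade ``finite index'' to equality $H=A_2/A_1$.
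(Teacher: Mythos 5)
Your proof is correct and follows essentially the same route as the paper: reduce to the elementary-free section $A_2/A_1$, apply Lemmas~\ref{lem:minlem}, \ref{lem:simplelem} and Theorem~\ref{thm:simple_type} to obtain the finitely many simple factors, and use the finiteness of outer automorphism groups (Theorem~\ref{thm:index}) together with elementary-freeness to upgrade ``finite index'' to equality. The only difference is organizational --- the paper channels Theorem~\ref{thm:index} through Lemma~\ref{lem:finindex} and the intersections $\bigcap_{i} C_L(N_i)$, whereas you embed $K/H$ and $G/A_2$ directly into products of outer automorphism groups; the one spot needing an explicit extra word is that the finiteness of $\Aut(A_2/A_1)/\Inn(A_2/A_1)$ requires rerunning your $Sym(k)$-plus-$\prod_i\Aut(N_i)$ argument for arbitrary topological automorphisms of $A_2/A_1$, not just for the conjugation action of $K$.
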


\begin{proof}
Suppose $G$ is non-elementary and put $H:=A_2/A_1$. The group $H$ is non-trivial, elementary-free, locally of finite rank, and locally pro-$\pi$. Let $N_1,\dots ,N_k$ list the minimal normal subgroups of $H$ given by Lemma~\rm\ref{lem:minlem}. Each $N_i$ is non-elementary and by Lemma~\rm\ref{lem:simplelem}, a topologically simple $p$-adic Lie group for some $p\in \pi$. Theorem~\rm\ref{thm:simple_type} implies each $N_i$ is of adjoint simple type, and appealing to \cite[(2.3.5)]{Mar91}, each $N_i$ is compactly generated. \par

\indent Lemma~\rm\ref{lem:finindex} implies $C_H(N_i)N_i=H$ for each $i$, hence $\bigcap_{i=1}^k C_H(N_i)N_i=H$. Since $N_i\sleq C_H(N_j)$ for $i\neq j$,
\[
\bigcap_{i=1}^k C_H(N_i)N_i=\left( \bigcap_{i=1}^k C_H(N_i)\right) N_1\dots N_k.
\]
The group $\bigcap_{i=1}^k C_H(N_i)$ is then a closed normal subgroup of $H$ that meets $N_i$ trivially for each $i$. The $N_i$, however, list all non-trivial minimal normal subgroups of $H$, hence $\bigcap_{i=1}^k C_H(N_i)=\{1\}$. We conclude $N_1\dots N_k=H$, and it follows $H\simeq  N_1\times\dots\times N_k$.\par

\indent The conjugation action of $G/A_1$ on the collection of minimal normal subgroups of $A_2/A_1$ induces a continuous homomorphism $\psi:G/A_1\rightarrow Sym(k)$. The group $L:=\ker(\psi)$ is then a finite index normal subgroup of $G/A_1$, and for each $N_i$, $N_i\trianglelefteq L$. Applying Lemma~\rm\ref{lem:finindex}, $C_{L}(N_i)N_i$ is finite index in $L$ for each $i$, whereby
\[
\bigcap_{i=1}^k C_L(N_i)N_i=\left(\bigcap_{i=1}^k C_L(N_i)\right) N_1\dots N_k
\]
is finite index in $L$. \par

\indent Since $G/A_1$ permutes $\{N_1,\dots, N_k\}$, the subgroup $\bigcap_{i=1}^k C_L(N_i)$ is normal in $G/A_1$. Furthermore, 
\[
\left(\bigcap_{i=1}^k C_L(N_i)\right)\simeq \left(\left(\bigcap_{i=1}^k C_L(N_i)\right) N_1\dots N_k\right)/\left(A_2/A_1\right)\sleq G/A_2,
\]
so $\bigcap_{i=1}^k C_L(N_i)$ is an elementary normal subgroup of $G/A_1$. The group $A_1$ is the elementary radical of $G$, hence $\bigcap_{i=1}^k C_L(N_i)=\{1\}$. We conclude $N_1\dots N_k=A_2/A_1$ is finite index in $G/A_1$. The group $A_2$ is therefore finite index in $G$ finishing the proof of the theorem.
\end{proof}

We obtain an immediate, interesting corollary:
\begin{cor}\label{cor:p_lfr}
Suppose $G$ is l.c.s.c. $p$-adic Lie group. Then either $G$ is elementary or the ascending elementary series
\[
\{1\}\sleq A_1\sleq A_2\sleq G
\]
is such that 
\begin{enumerate}[(1)]
\item~ $A_1$ is elementary and $G/A_2$ is finite; and
 
\item~ $A_2/A_1\simeq N_1\times\dots\times N_k$ for $0<k<\infty$ where each $N_i$ is a non-elementary compactly generated topologically simple $p$-adic Lie group of adjoint simple type.
\end{enumerate}
\end{cor}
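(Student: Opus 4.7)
The plan is to derive the corollary directly from Theorem~\ref{thm:pi_lfr} by verifying that any l.c.s.c. $p$-adic Lie group satisfies the hypotheses of that theorem with $\pi=\{p\}$. The key input is Theorem~\ref{thm:p-adic_char} (Lazard, Lubotzky--Mann): a t.d.l.c.\ group is a $p$-adic Lie group if and only if it has a compact open subgroup that is pro-$p$ and of finite rank. Applied to our $G$, this produces $U\in\Uca(G)$ which is pro-$p$ and of finite rank. In particular $G$ is locally of finite rank, and $U$ witnesses that $G$ is locally pro-$\pi$ for the finite set of primes $\pi=\{p\}$.

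Having verified the hypotheses, I would simply invoke Theorem~\ref{thm:pi_lfr}: either $G$ is elementary, or the ascending elementary series $\{1\}\sleq A_1\sleq A_2\sleq G$ satisfies (1) $A_1$ is elementary and $G/A_2$ is finite, and (2) $A_2/A_1\simeq N_1\times\dots\times N_k$ where each $N_i$ is a non-elementary compactly generated topologically simple $p$-adic Lie group of adjoint simple type for some prime in $\pi=\{p\}$. Since $\pi$ has only the single element $p$, the prime associated to each $N_i$ is forced to be $p$, which matches the statement of the corollary exactly.

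There is no real obstacle here; the content has already been done in Theorem~\ref{thm:pi_lfr}, and Theorem~\ref{thm:p-adic_char} provides precisely the local hypothesis needed to specialize to a single prime. The only mild point to record is that the factors $N_i$ remain $p$-adic Lie groups for the same prime $p$, which is immediate from $\pi=\{p\}$. No separate argument about simplicity of Lie algebras or about algebraic models is needed at the level of this corollary, since these have already been absorbed into the statement of Theorem~\ref{thm:pi_lfr}.
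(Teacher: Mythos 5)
Your proposal is correct and matches the paper's own proof: both invoke Theorem~\ref{thm:p-adic_char} to produce a pro-$p$ compact open subgroup of finite rank and then apply Theorem~\ref{thm:pi_lfr} with $\pi=\{p\}$. Nothing is missing.
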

\begin{proof} 
By Theorem~\rm\ref{thm:p-adic_char}, $G$ contains a compact open subgroup that has finite rank and is pro-$p$. The corollary then follows from Theorem~\rm\ref{thm:pi_lfr}.
\end{proof}

\begin{rmk}
For a l.c.s.c. $p$-adic Lie group $G$, $G/A_1=G/\Rad{\Es}(G)$ is semisimple as a Lie group via Proposition~\rm\ref{prop:sgrp_gl}. It is also \emph{necessarily} the case $G/\Rad{\Es}(G)$ is compactly generated. Indeed, each direct factor $N_i$ given by Corollary~\ref{cor:p_lfr} is compactly generated, and therefore, $G/\Rad{\Es}(G)$ is compactly generated.
\end{rmk}

\indent Combining Theorem~\rm\ref{thm:pi_lfr} and Corollary~\ref{cor:loc_nil_main2}, we obtain a decomposition result for locally of finite rank t.d.l.c.s.c. groups:

\begin{cor}\label{cor:finite_rank}
Suppose $G$ is a t.d.l.c.s.c. group that is locally of finite rank. Then either $G$ is elementary or $G$ admits an increasing exhaustion by compactly generated open subgroups $(P_i)_{i\in \omega}$ such that for each $i\in \omega$, the ascending elementary series for $P_i$
\[
\{1\}\sleq A_1(i)\sleq A_2(i)\sleq P_i
\]
is so that
\begin{enumerate}[(a)]
\item~$A_1(i)$ is elementary and $P_i/A_2(i)$ is finite; and

\item~$A_2(i)/A_1(i)\simeq N_1(i)\times\dots\times N_{k_i}(i)$ for $0< k_i<\infty$ where each $N_j(i)$ is a non-elementary compactly generated topologically simple $p$-adic Lie group of adjoint simple type for some prime $p$.
\end{enumerate}
\end{cor}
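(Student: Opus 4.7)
The plan is to bootstrap from the locally pro-$\pi$ case of Theorem~\ref{thm:pi_lfr} via the exhaustion provided by Corollary~\ref{cor:loc_nil_main2}, bridging the two by quotienting each compactly generated piece by its locally elliptic radical.

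First, applying Corollary~\ref{cor:loc_nil_main2} yields an $\subseteq$-increasing exhaustion $(H_i)_{i\in\omega}$ of $G$ by compactly generated open subgroups with $\tilde{H}_i:=H_i/\Rad{\mc{LE}}(H_i)$ locally pro-$\pi_i$ for some finite set of primes $\pi_i$. Each $H_i$, being open in $G$, is itself locally of finite rank, and the image in $\tilde{H}_i$ of any finite rank compact open subgroup of $H_i$ is again compact open and topologically finitely generated, so $\tilde{H}_i$ is locally of finite rank and locally pro-$\pi_i$. Thus Theorem~\ref{thm:pi_lfr} applies to each $\tilde{H}_i$.

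Next I would run a dichotomy. Since $\Rad{\mc{LE}}(H_i)$ is locally elliptic, it is a countable increasing union of compact open subgroups, hence elementary; closure of $\Es$ under group extensions then gives $H_i\in\Es$ iff $\tilde{H}_i\in\Es$. If every $\tilde{H}_i$ is elementary, $G=\bigcup_iH_i$ is elementary by closure of $\Es$ under countable increasing unions. Otherwise fix the least $j$ with $\tilde{H}_j$ non-elementary; for every $i\sgeq j$, $H_j$ is an open, hence closed, subgroup of $H_i$, so by Theorem~\ref{thm:closure_main}(2) $H_i$, and therefore $\tilde{H}_i$, must be non-elementary. Reindexing by $P_i:=H_{i+j}$ yields an exhaustion $(P_i)_{i\in\omega}$ for which each $\tilde{P}_i$ is non-elementary.

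For each such $i$, apply Theorem~\ref{thm:pi_lfr} to $\tilde{P}_i$ to obtain a characteristic series $\{1\}\sleq\tilde{A}_1\sleq\tilde{A}_2\sleq\tilde{P}_i$ satisfying clauses (a) and (b) with primes drawn from $\pi_i$. Let $q\colon P_i\to\tilde{P}_i$ denote the quotient map and set $A_1(i):=q^{-1}(\tilde{A}_1)$ and $A_2(i):=q^{-1}(\tilde{A}_2)$. I claim these form the ascending elementary series of $P_i$: the group $A_1(i)$ is an extension of elementary by elementary, hence elementary and characteristic in $P_i$, while $q(\Rad{\Es}(P_i))$ is an elementary normal subgroup of $\tilde{P}_i$ and so lies in $\tilde{A}_1=\Rad{\Es}(\tilde{P}_i)$, forcing $A_1(i)=\Rad{\Es}(P_i)$. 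Since $P_i/A_1(i)\simeq\tilde{P}_i/\tilde{A}_1$ under the natural projection, the correspondence theorem yields $A_2(i)/A_1(i)\simeq\tilde{A}_2/\tilde{A}_1$ and shows $A_2(i)/A_1(i)=\Res{\Es}(P_i/A_1(i))$. Clauses (a) and (b) then transfer immediately from the corresponding clauses for $\tilde{P}_i$.

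The conceptual work is already done by Theorem~\ref{thm:pi_lfr} and Corollary~\ref{cor:loc_nil_main2}; the point requiring most care is the verification that $A_1(i)$ and $A_2(i)$ really recover the ascending elementary series of $P_i$ from that of $\tilde{P}_i$, which rests on the maximality of the elementary radical and the minimality of the elementary residual. This is essentially bookkeeping rather than a genuine obstruction.
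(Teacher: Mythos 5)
Your proof is correct and follows exactly the route the paper intends: the paper offers no written proof beyond the remark that the corollary follows by ``combining Theorem~\ref{thm:pi_lfr} and Corollary~\ref{cor:loc_nil_main2}'', and your argument is precisely that combination with the bookkeeping (the dichotomy on elementarity and the identification of the pulled-back series with the ascending elementary series of $P_i$, using $\Rad{\mc{LE}}(P_i)\sleq\Rad{\Es}(P_i)$) carried out in detail.
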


\indent We can indeed do better than Corollary~\ref{cor:finite_rank}. Suppose $G$ is a t.d.l.c.s.c. group that is locally of finite rank and non-elementary. By passing to $A_2/A_1$ where $\{1\}\sleq A_1\sleq A_2\sleq G$ is the ascending elementary series, we may assume $G$ is elementary-free. Fix $U$ a compact open subgroup of finite rank and let $\pi$ list the primes $p$ so that the Fitting subgroup $F(U)$ has a non-trivial $p$-Sylow subgroup. We now apply Lemma~\ref{lem:loc_finrank} to build $N_p\trianglelefteq G$ for each $p\in \pi$ so that $W_p:=F(U)_p$ is a compact open subgroup of $N_p$. \par

\indent The group $L:=\cgrp{N_p\mid p\in \pi}$ is a normal subgroup of $G$ containing $F(U)$. In view of Theorem~\ref{thm:finrank}, $G/L$ has a compact open subgroup which is a quotient of the virtually solvable group $U/F(U)$. The group $G/L$ is therefore locally solvable, and via Theorem~\ref{thm:locsolv_e}, it is elementary. Since $G$ is elementary-free, $L=G$, so $\grp{N_p\mid p\in \pi}$ is dense in $G$.\par 

\indent For each $p\in \pi$, form $K_p:=\cgrp{N_q\mid q\in \mb{P}\setminus \{p\}}$. As $N_p$ centralizes a dense subgroup of $K_p$, it follows the group $N_p\cap K_p$ is an abelian, a fortiori elementary, normal subgroup of $G$. Since $G$ is elementary-free, $N_p\cap K_p$ is trivial. The restriction of the usual projection $\phi:N_p\rightarrow G/K_p$ is thus injective, and by the previous paragraph, it has dense image.\par

\indent On the other hand, the group $N_p$ is locally pro-$p$ and locally of finite rank with trivial elementary radical. Corollary~\ref{cor:p_lfr} thus implies $N_p$ is virtually a finite direct product of non-elementary compactly generated topologically simple $p$-adic Lie groups of adjoint simple type; say $L\trianglelefteq N_p$ is the finite index direct product. Via Lemmas~\ref{lem:p-adic_inj} and \ref{lem:p-adic_quasi}, the image $\phi(L)$ is closed, and it follows further $\phi(L)$ has finite index in $G$. We conclude $L\simeq \phi(L)=G$ since $G$ is elementary-free, and therefore, $N_p=L$. That is to say, $N_p$ is a finite direct product of non-elementary compactly generated topologically simple $p$-adic Lie groups of adjoint simple type.\par

\indent We are now prepared to prove the desired theorem.
 
\begin{thm}\label{thm:loc_fin_rank}
Suppose $G$ is a t.d.l.c.s.c. group that is locally of finite rank. Then either $G$ is elementary or the ascending elementary series
\[
\{1\}\sleq A_1\sleq A_2\sleq G
\]
is such that 
\begin{enumerate}[(1)]
\item~$A_1$ is elementary and $G/A_2$ is elementary; and

\item~there is a possibly infinite set of primes $\pi$ so that $A_2/A_1$ is a quasi local direct product of $(N_p,W_p)_{p\in \pi}$ where $N_p$ is a finite direct product of non-elementary compactly generated topologically simple $p$-adic Lie groups each of adjoint simple type and $W_p$ is a compact open subgroup of $N_p$.
\end{enumerate}
\end{thm}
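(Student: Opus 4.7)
The plan is to reduce to the elementary-free case via the ascending elementary series (Theorem 2.17) and then assemble the quasi local direct product from the prime-by-prime normal subgroups produced by Lemma 5.4. Assuming $G$ is non-elementary, pass to $H := A_2/A_1$, which is elementary-free, locally of finite rank, and still a t.d.l.c.s.c. group. Fix $U \in \mathcal{U}(H)$ of finite rank and let $\pi$ be the (possibly infinite) set of primes $p$ such that the $p$-Sylow subgroup of the Fitting subgroup $F(U)$ is non-trivial. For each $p \in \pi$, Lemma 5.4 produces a closed characteristic subgroup $N_p \trianglelefteq H$ so that $W_p := F(U)_p$ is a compact open subgroup of $N_p$, and so that $N_p \cap N_q = \{1\}$ for distinct primes $p \neq q$ in $\pi$.

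Next I would identify the structure of each $N_p$. Since $F(U)_p \in \mathcal{U}(N_p)$ is pro-$p$ of finite rank, $N_p$ is locally pro-$p$ and locally of finite rank with trivial elementary radical (inherited from $H$ being elementary-free and $N_p$ being characteristic). Corollary 5.8 then shows that $N_p$ is virtually a finite direct product of non-elementary compactly generated topologically simple $p$-adic Lie groups of adjoint simple type. One then has to upgrade ``virtually'' to an honest equality: let $L \trianglelefteq N_p$ be the finite-index direct product summand and consider the projection $\phi : N_p \to H/K_p$ where $K_p := \overline{\langle N_q \mid q \in \pi \setminus \{p\}\rangle}$. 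Since $N_p$ centralizes a dense subgroup of $K_p$, the intersection $N_p \cap K_p$ is abelian hence elementary, and so trivial by elementary-freeness; thus $\phi$ is injective, and by Lemmas 5.10 and 5.11, $\phi(L)$ is closed and of finite index in $H$, forcing $L = N_p$.

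The generation step comes next. Set $L := \overline{\langle N_p \mid p \in \pi \rangle}$; then $L \supseteq F(U)$, and Theorem 2.24 together with Theorem 3.4 forces $H/L$ to be locally solvable and hence elementary (since the compact open image of $U$ in $H/L$ is a quotient of the virtually solvable $U/F(U)$). Elementary-freeness of $H$ gives $L = H$, so $\langle N_p \mid p \in \pi\rangle$ is dense in $H$. Combining this with the triviality of $N_p \cap K_p$ shows that the subgroups $N_p$ pairwise commute and intersect pairwise trivially, so multiplication induces a well-defined homomorphism $m: \bigoplus_{p \in \pi}(N_p, W_p) \to H$ with dense image.

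The last — and I expect the main — obstacle is verifying that $m$ is injective. Following the pattern of the proof of Theorem 3.2, suppose $(x_{p_0},\dots,x_{p_n},w_{p_{n+1}},\dots)$ is a non-trivial element of the kernel. Then the intersection $M := \overline{\langle N_{p_0}\cdots N_{p_n}\rangle} \cap \prod_{m > n} W_{p_m}$ is non-trivial. Some prime $p_k$ with $k > n$ has the $p_k$-Sylow subgroup of $M$ non-trivial, and this Sylow subgroup must lie in $N_{p_k} \cap \overline{\langle N_{p_0}\cdots N_{p_n}\rangle} \subseteq N_{p_k} \cap K_{p_k}$; but the latter is trivial, a contradiction. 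Hence $m$ is injective, $H = A_2/A_1$ is a quasi local direct product of $(N_p, W_p)_{p\in\pi}$, and $G/A_2$ is elementary by Theorem 2.17, completing the proof.
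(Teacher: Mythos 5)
Your proposal is correct and follows essentially the same route as the paper: reduce to the elementary-free section, build the characteristic subgroups $N_p$ from the Fitting subgroup's Sylow decomposition, upgrade ``virtually a product of simple $p$-adic Lie groups'' to an equality via the injection $N_p\hookrightarrow H/K_p$, obtain dense generation from local solvability of $H/\overline{\grp{N_p\mid p\in\pi}}$, and check injectivity of the multiplication map by locating a non-trivial Sylow subgroup of the kernel's trace. The only (harmless) deviation is in the last step, where you derive the contradiction from $N_{p_k}\cap K_{p_k}=\{1\}$ rather than from the non-existence of abelian locally normal subgroups; both close the argument.
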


\begin{proof}
By passing to $A_2/A_1$, we may assume $G$ is elementary-free. Fix $U$ a compact open subgroup of finite rank and let $\pi$ list the primes $p$ so that the Fitting subgroup $F(U)$ has a non-trivial $p$-Sylow subgroup. We now apply Lemma~\ref{lem:loc_finrank} to build $N_p\trianglelefteq G$ for each $p\in \pi$ so that $W_p:=F(U)_p$ is a compact open subgroup of $N_p$. By the proceeding discussion, $N_p$ is a finite direct product of non-elementary compactly generated topologically simple $p$-adic Lie groups of adjoint simple type. It thus remains to show that $G$ is a quasi local direct product of $(N_p,W_p)_{p\in \pi}$. \par

\indent In view of Proposition~\ref{prop:pro-nil}, the multiplication map $m:\prod_{p\in \pi}W_p\rightarrow F(U)$ is an isomorphism of profinite groups. The multiplication map $m:\bigoplus_{p\in \pi}(N_p,W_p)\rightarrow G$ is therefore well-defined. Since $N_p$ centralizes $N_q$ for $p\neq q$, it follows $m$ is also a continuous homomorphism. The image of $m$ finally contains $\grp{N_p\mid p\in \pi}$, so it is dense via our preliminary discussion.\par

\indent We now check that $m$ is injective. Say $(h_{p_0},\dots,h_{p_n},w_{p_{n+1}},\dots)\mapsto 1$. This implies $M:=\cgrp{N_{p_0}\dots N_{p_n}}\cap m(\prod_{m>n}W_{p_m})$ is non-trivial. There is some prime $p_k$ with $k>n$ so that the profinite group $M$ has a non-trivial $p_k$-Sylow subgroup. The uniqueness of the $p$-Sylow subgroups of $F(U)$ imply $M\cap N_{p_k}$ is non-trivial. However, $N_{p_k}$ commutes with $M$, so $M\cap N_{p_k}$ is abelian. This is absurd since $M\cap N_{p_k}$ is then a non-trivial abelian locally normal subgroup of $G$ contradicting Theorem~\ref{thm:[A]semisimple}. The multiplication map is therefore injective.\par

\indent We have now verified that $G$ is a quasi local direct product of $(N_p,W_p)_{p\in \pi}$ proving the theorem.
\end{proof}

\indent It is easy to build examples of locally of finite rank groups $G$ so that $G/A_2$ is not finite, so Theorem~\ref{thm:loc_fin_rank} is sharp in this respect. However, we do not know of any examples in which the quasi local direct product given in Theorem~\ref{thm:loc_fin_rank} fails to be a local direct product.

\section{Application: the class $\ms{P}$}
As an application, we consider the class of all l.c.s.c. $p$-adic Lie groups for all primes $p$, denoted $\ms{P}$, and its elementary closure. 

\subsection{Preliminaries}

\begin{defn}\label{def:e_closure}
Given a class of t.d.l.c.s.c. groups $\ms{G}$, the \textbf{elementary closure} of $\ms{G}$, denoted $\ms{EG}$, is the smallest class of t.d.l.c.s.c. groups such that
\begin{enumerate}[(i)]
\item~$\ms{EG}$ contains $\ms{G}$, all second countable profinite groups, and countable discrete groups.

\item~$\ms{EG}$ is closed under group extensions of second countable profinite groups, countable discrete groups, and groups in $\ms{G}$.

\item~If $G$ is a t.d.l.c.s.c. group and $G=\bigcup_{i\in \omega}G_i$ where $(G_i)_{i\in \omega}$ is an $\subseteq$-increasing sequence of open subgroups of $G$ with $G_i\in\ms{EG}$ for each $i$, then $G\in \ms{EG}$. We say $\ms{EG}$ is closed under countable increasing unions.
\end{enumerate}
\end{defn}

Under mild additional conditions on $\ms{G}$, the elementary closure is a robust class.

\begin{thm}[{\cite[Theorem 3.20]{W_1_14}}]\label{thm:e_closure}
Suppose $\ms{G}$ is a class of t.d.l.c.s.c. groups that is closed under isomorphism of topological groups, taking closed subgroups, and taking Hausdorff quotients. Suppose further $\ms{G}$ satisfies the following:
\begin{enumerate}[(a)] 
\item~If $H$ is a t.d.l.c.s.c. group and $ \psi:H\rightarrow G$ is a continuous, injective homomorphism with $G\in \ms{G}$, then $H\in \ms{EG}$.

\item~If $G$ is a t.d.l.c.s.c. group, $H,L\trianglelefteq G$, $[H,L]=\{1\}$, and $H\in \ms{G}$, then $\ol{HL}/L\in \ms{EG}$.

\end{enumerate}
Then the permanence properties in Theorem~\rm\ref{thm:closure_main} hold of $\ms{EG}$.
\end{thm}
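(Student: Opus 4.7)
The plan is to mirror the proof of Theorem~\ref{thm:closure_main} for $\Es$, since the class $\ms{EG}$ is defined in exactly the same recursive style, with the base enlarged from \{discrete, profinite\} to \{discrete, profinite\}$\cup\ms{G}$. The natural tool is a decomposition rank: for $G\in\ms{EG}$, let $\xi(G)$ be the least ordinal $\alpha$ such that $G$ can be constructed in $\alpha$ stages of extensions by discrete/profinite/$\ms{G}$-quotients and $\subseteq$-increasing countable open unions, starting from discrete, profinite, and $\ms{G}$-groups. One verifies by induction on the definition that $\xi$ is well-defined, monotone under continuous injective images (once that is proved), and satisfies $\xi(G/N),\xi(N)\sleq\xi(G)$ on the way. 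Most of the work reduces to an induction on $\xi$, using the hypotheses on $\ms{G}$ only at the base cases.

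First I would handle closure under continuous injective images (property (2)). Induct on $\xi(G)$: the base cases $G$ discrete or profinite are standard, and the case $G\in\ms{G}$ is \emph{exactly} hypothesis (a). The inductive step for an extension $\{1\}\to H\to G\to Q\to\{1\}$ and an injective $\psi:K\to G$ applies the inductive hypothesis to $\psi^{-1}(H)\hookrightarrow H$ and $K/\psi^{-1}(H)\hookrightarrow Q$, then uses closure under extensions of $\ms{EG}$ by discrete/profinite/$\ms{G}$-groups built into the definition of $\ms{EG}$; the union case is trivial since $\psi^{-1}$ distributes over increasing unions of open subgroups. Closure under quotients (property (3)) is analogous, using that $\ms{G}$ is closed under Hausdorff quotients at the base step. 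Closure under countable inverse limits and residually-$\ms{EG}$ groups (property (4)) then follows by injecting $G$ into $\prod_i G/N_i$ with suitably chosen open normal $N_i$ and applying (2).

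The core technical step is closure under group extensions with arbitrary kernel in $\ms{EG}$ (property (1)). Given $\{1\}\to N\to G\to Q\to\{1\}$ with $N,Q\in\ms{EG}$, I would fix $N$ and induct on $\xi(Q)$, reducing to the three base cases: $Q$ discrete, $Q$ profinite, and $Q\in\ms{G}$. The first two are built into the definition of $\ms{EG}$; the third requires lifting a $\ms{G}$-quotient of $G$ back to $G$ itself. Combined with a secondary induction on $\xi(N)$, this quickly reduces to the assertion that $\ms{EG}$ is closed under extensions by groups in $\ms{G}$, which is immediate from the definition. From (1), (2), (3), closure under quasi-products and local direct products (properties (5) and (6)) follows as in the proof of Theorem~\ref{thm:closure_main}: a local direct product $\bigoplus_{a\in A}(G_a,U_a)$ is an increasing open union of iterated extensions of $G_{a_0}\times\cdots\times G_{a_n}$ by the profinite group $\prod_{i>n}U_{a_i}$, while a quasi-product is handled by injecting $N_1\times\cdots\times N_k$ densely and using (2) together with the normal-compressing hypothesis (b).

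The step that will require real care is property (7), the ``normaliser condition'': if $(C_i)_{i\in\omega}$ is an $\subseteq$-increasing sequence of $\ms{EG}$-subgroups with $N_G(C_i)$ open for each $i$ and $\ol{\bigcup_i C_i}=G$, one must produce a presentation of $G$ as an increasing open union of $\ms{EG}$-subgroups. Setting $D_i:=\ol{C_i^G}$ and $B_i:=N_G(C_i)$, one has $B_i\sleq_oG$ with $C_i\trianglelefteq B_i$, and this is where hypothesis (b) becomes essential: to pass from $\ol{C_iL}/L\in\ms{EG}$ for pieces $L\trianglelefteq G$ to the conclusion $D_i\in\ms{EG}$, one needs that commuting $\ms{G}$-normal subgroups stay inside $\ms{EG}$ modulo the other. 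The main obstacle here, exactly as in the proof for $\Es$, is that $\bigcup_i C_i$ need not be open and the $D_i$ need not exhaust $G$; the resolution is the same enumeration-of-a-dense-set argument used in Proposition~\ref{prop:loc_nil} and elsewhere in this paper, producing compactly generated open subgroups $H_j\sleq_oG$ with $H_j\sleq \ol{C_{i(j)}^{H_j}}\cdot U$ for some $U\in\Uca(G)$, each of which lies in $\ms{EG}$ by (1), (2), and hypothesis (b). The union $G=\bigcup_jH_j$ is then in $\ms{EG}$ by the countable increasing union axiom.
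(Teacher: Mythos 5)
This theorem is not proved in the paper at all: it is quoted verbatim from \cite[Theorem 3.20]{W_1_14}, so there is no in-paper argument to compare against, and any proof necessarily reproduces the machinery of that reference. Your outline does follow the strategy of that source (a decomposition rank on $\ms{EG}$ and transfinite induction, with hypotheses (a) and (b) entering only at the new base case $\ms{G}$), so the overall architecture is right.

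However, as written the sketch has a genuine circularity in the very first step. In the extension case of your induction for closure under continuous injective images, you pass to $K/\psi^{-1}(H)\hookrightarrow Q$ and then invoke ``closure under extensions of $\ms{EG}$ by discrete/profinite/$\ms{G}$-groups built into the definition.'' But $K/\psi^{-1}(H)$ is only a group admitting a continuous injection into $Q$; it need not be discrete, profinite, or in $\ms{G}$. If $Q$ is profinite, $K/\psi^{-1}(H)$ can be any residually discrete group (e.g.\ $\Zb\hookrightarrow\widehat{\Zb}$), and showing such groups lie in $\ms{EG}$ is exactly your property (4), which you derive \emph{from} (2); if $Q\in\ms{G}$, hypothesis (a) only places $K/\psi^{-1}(H)$ in $\ms{EG}$, so you need the full extension property (1), which you prove \emph{after} (2). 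The actual proof in \cite{W_1_14} resolves this by a carefully ordered simultaneous induction (extension closure is established first, and the profinite base of the injective-image argument is routed through residual discreteness), and this ordering is not optional. Similarly, your treatment of property (7) amounts to ``run the enumeration argument of the cited paper,'' which is precisely the content being asserted rather than a proof of it. So the proposal is a correct roadmap but not yet a proof: the nested induction must be restructured so that no step quotes a later one.
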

We call classes that satisfy the hypotheses of Theorem~\rm\ref{thm:e_closure} \textbf{elementarily robust}.

\subsection{The elementary closure of $\ms{P}$}

\begin{thm}
The class $\ms{P}$ is elementarily robust. In particular, $\ms{EP}$ enjoys the same closure properties as in Theorem~\rm\ref{thm:closure_main}.
\end{thm}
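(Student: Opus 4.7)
The plan is to verify the hypotheses of Theorem~\ref{thm:e_closure} for $\ms{P}$; the stated conclusion then follows at once. The three basic closures are routine from Theorem~\ref{thm:p-adic_char}: if $U$ is a compact open pro-$p$ finite rank subgroup of $G \in \ms{P}$, then $U\cap H$ is such a subgroup of any closed $H\sleq G$, and $UN/N\simeq U/(U\cap N)$ is such a subgroup of any Hausdorff quotient $G/N$. Condition (a) is handled as follows: given $\psi\colon H\to G$ continuous and injective with $G\in \ms{P}$, fix a compact open subgroup $V$ of $H$ inside $\psi^{-1}(U)$; then $\psi|_V\colon V\to U$ is a continuous injection of compact groups, hence a topological embedding, so $V$ is isomorphic to a closed subgroup of the pro-$p$ finite rank group $U$ and is therefore itself pro-$p$ of finite rank. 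Theorem~\ref{thm:p-adic_char} then gives $H\in \ms{P}\subseteq \ms{EP}$.

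For condition (b), suppose $H,L\trianglelefteq G$ are closed, $[H,L]=\{1\}$, and $H\in \ms{P}$; set $K:=\overline{HL}/L$ with quotient map $\pi$. I would apply Corollary~\ref{cor:p_lfr} to $H$, obtaining the ascending elementary series $\{1\}\sleq A_1\sleq A_2\sleq H$ with $A_1\in \Es$, $H/A_2$ finite, and $A_2/A_1\simeq N_1\times\cdots\times N_k$ a finite product of non-elementary topologically simple $p$-adic Lie groups of adjoint simple type. Since $A_1,A_2$ are characteristic in $H$, they are normal in $G$, and setting $B_j:=\overline{A_j L}/L$ gives a chain $\{1\}\sleq B_1\sleq B_2\sleq K$ of closed normal subgroups of $K$. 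The key claims are (i)~$B_1\in \ms{EP}$, (ii)~$B_2/B_1\in \ms{P}$, and (iii)~$K/B_2$ is finite; combined with the extension closure built into the definition of $\ms{EP}$ (applied once with a $\ms{P}$ quotient and once with a discrete quotient), they yield $K\in \ms{EP}$. For (ii), the image of $A_2/A_1\simeq \prod N_i$ in $B_2/B_1$ is a Hausdorff quotient by a closed normal subgroup, and since each $N_i$ is topologically simple it has the form $\prod_{i\in J}N_i$ for some $J\subseteq\{1,\ldots,k\}$. Lemma~\ref{lem:p-adic_inj} forces each embedding $N_i\hookrightarrow B_2/B_1$ to be closed, and iteration of Lemma~\ref{lem:p-adic_quasi} shows the internal product is already closed in $B_2/B_1$ and, being dense, coincides with $B_2/B_1$. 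For (iii), $HL/\overline{A_2L}$ is a quotient of the finite group $H/A_2$, so it is a finite dense (and hence closed) subgroup of $K/B_2$ equal to the whole.

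The hard part is step (i): showing $B_1\in \ms{EP}$ when $A_1$ is an elementary $p$-adic Lie group. The continuous injection $A_1/(A_1\cap L)\to B_1$ with dense image need not be a topological isomorphism, as the example $\Zb\hookrightarrow \widehat{\Zb}$ already shows the completion can enlarge the topology. I would argue instead that $B_1\in \Es$ directly by exploiting the local pro-nilpotent structure of $A_1$: by Theorem~\ref{thm:finrank} a compact open subgroup of $A_1$ is pro-nilpotent, so its image in $B_1$ together with a compact open subgroup of $B_1$ generates a compact open subgroup of $B_1$ whose finite continuous quotients are all nilpotent, hence profinite and elementary. An exhaustion of $A_1$ by compactly generated open subgroups then provides an increasing sequence of closed elementary subgroups of $B_1$ whose normalizers are open and whose union is dense; Theorem~\ref{thm:closure_main}(7) then yields $B_1\in \Es\subseteq \ms{EP}$, completing the verification.
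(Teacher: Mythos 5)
Your verification of condition (a), the basic closure properties, and the upper two layers of condition (b) all track the paper's own argument closely: the paper likewise reduces (b) to the ascending elementary series of $H$, uses Corollary~\ref{cor:p_lfr} to write $A_2/A_1$ as a finite product of non-elementary topologically simple $p$-adic Lie groups, and then uses Lemma~\ref{lem:p-adic_inj} and Lemma~\ref{lem:p-adic_quasi} to see that the image of this product in $\ol{HL}/\ol{A_1L}$ is closed, with the finite-index step handling the top. Your bookkeeping (allowing some factors $N_i$ to die and identifying $B_2/B_1$ with a subproduct) is a harmless variant of the paper's, which instead shows the map $H/A_1\rightarrow \ol{HL}/\ol{A_1L}$ is outright injective.

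The genuine gap is in your step (i). You correctly identify that $A_1/(A_1\cap L)\rightarrow \ol{A_1L}/L$ need not be a topological isomorphism, but your replacement argument does not work. First, the subgroup of $B_1=\ol{A_1L}/L$ generated by the image of a compact open subgroup $V\sleq A_1$ and a compact open subgroup $W$ of $B_1$ is open but there is no reason for it to be compact: the image of $V$ need not normalize $W$ nor be normalized by it, and a group generated by two compact subgroups is in general not compact. Second, nothing constrains the finite continuous quotients of a compact open subgroup of $B_1$: such a subgroup lifts to a compact open subgroup of the arbitrary closed subgroup $\ol{A_1L}$ of the arbitrary ambient group $G$, and the hypothesis $[H,L]=\{1\}$ says nothing about it; so the claim that the generated group is pro-nilpotent is unsupported (and in any case ``profinite, hence elementary'' would hold of any compact group, so nilpotency is doing no work). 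Finally, for the appeal to Theorem~\ref{thm:closure_main}(7) you would need your closed subgroups $C_i$ of $B_1$ to be elementary with open normalizers, and neither property has been established --- showing $\ol{\pi(O_i)}$ is elementary is exactly the same ``dense image of an elementary group'' problem you started with, one level down. The paper avoids all of this by quoting \cite[Lemma~3.11]{W_1_14}, which says precisely that if $A_1,L\trianglelefteq G$ commute and $A_1$ is elementary, then $\ol{A_1L}/L$ is elementary; this lemma (also used at the outset to dispose of the case where $H$ itself is elementary, a case your write-up silently subsumes into step (i)) is the missing ingredient, and without it or an equivalent argument the proof of condition (b) is incomplete.
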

\begin{proof} It is well-known the class $\ms{P}$ is closed under isomorphism of topological groups, taking closed subgroups, and Hausdorff quotients; one may, alternatively, devise a proof via Theorem~\ref{thm:p-adic_char}. We thus need to show $\ms{P}$ satisfies $(a)$ and $(b)$ of Theorem~\rm\ref{thm:e_closure}. \par

\indent For $(a)$, suppose $H$ is a t.d.l.c.s.c. group and $\psi:H\rightarrow G$ is a continuous, injective homomorphism with $G\in \ms{P}$. Let $U\in \Uca(G)$ be a pro-$p$ compact open subgroup with finite rank given by Theorem~\rm\ref{thm:p-adic_char}. Since $\psi$ is continuous, there is $W\in \Uca(H)$ such that $W\simeq \psi(W)\sleq U$. We conclude $W$ has finite rank and is pro-$p$, and via Theorem~\rm\ref{thm:p-adic_char}, $H\in \ms{EP}$.\par

\indent For $(b)$, suppose $G$ is a t.d.l.c.s.c. group, $H,L\trianglelefteq G$, $[H,L]=\{1\}$, and $H\in \ms{P}$. We may assume $H$ is non-elementary since else we are done via \cite[Lemma 3.11]{W_1_14}.\par

\indent Let $\{1\}\sleq A_1\sleq A_2\sleq H$ be the ascending elementary series for $H$ and let
\[
\psi:H/A_1\rightarrow \left(\ol{HL}/L\right)/\left(\ol{A_1L}/L\right)\simeq\ol{HL}/\ol{A_1L}
\]
be the obvious map. Since $A_1$ is elementary, $\ol{A_1L}/L$ is elementary via \cite[Lemma 3.11]{W_1_14}. Theorem~\rm\ref{thm:closure_main} then implies $\ker(\psi)$ is elementary, and since $A_1$ is the elementary radical of $H$, we infer $\ker(\psi)=\{1\}$. The map $\psi$ is therefore injective. \par

\indent Corollary~\rm\ref{cor:p_lfr} implies $A_2/A_1\simeq N_1\times \dots \times N_k$ with each $N_i$ a non-elementary topologically simple $p$-adic Lie group. In view of Lemma~\rm\ref{lem:p-adic_inj}, the group $\psi(N_i)$ is a closed subgroup of $\ol{HL}/\ol{A_1L}$ isomorphic to $N_i$. We infer $\ol{\psi(N_1)\dots \psi(N_k)}$ is a subgroup of $\ol{HL}/\ol{A_1L}$ that is a quasi-product with quasi-factors $\psi(N_1)\dots \psi(N_k)$. It now follows from Lemma~\rm\ref{lem:p-adic_quasi} $\psi(N_1)\dots \psi(N_k)$ is closed in $\ol{HL}/\ol{A_1L}$. \par

\indent The group $\psi(A_2/A_1)$ is thus a closed subgroup of $\ol{HL}/\ol{A_1L}$. Furthermore, since $A_2/A_1$ is finite index in $H/A_1$, $\psi(H/A_1)$ is closed, whereby $\psi(H/A_1)=\ol{HL}/\ol{A_1L}$. We conclude $\ol{HL}/L$ is elementary-by-$p$-adic, and therefore, $\ol{HL}/L\in \ms{EP}$ as required.
\end{proof}

\begin{rmk}
One can define a rank on $\ms{EP}$ analogous to the construction rank on $\ms{E}$, the class of elementary groups; this rank is a useful tool to study elements of $\ms{EP}$. It follows by induction on this rank that every non-discrete compactly generated group in $\ms{EP}$ that is topologically simple is a $p$-adic Lie group for some prime $p$. On the other hand, there are many non-discrete compactly generated groups that are topologically simple that are \textit{not} $p$-adic Lie groups. In view of the closure properties of $\ms{EP}$, we conclude the class of t.d.l.c.s.c. groups cannot be approximated in any reasonable way by $p$-adic Lie groups.
\end{rmk}

\bibliographystyle{bibgen}
\bibliography{C:/Users/Phillip/Dropbox/Research/Tex/Bibtex/biblio}

\def\cprime{$'$} \def\cprime{$'$} \def\cprime{$'$}
\begin{thebibliography}{10}

\bibitem{BEW11}
{\sc Y.~Barnea}, {\sc M.~Ershov}, and {\sc T.~Weigel}, Abstract commensurators
  of profinite groups. {\em Trans. Amer. Math. Soc.\/} 363, no.~10, (2011),
  5381--5417.

\bibitem{BT73}
{\sc A.~Borel} and {\sc J.~Tits}, Homomorphismes ``abstraits'' de groupes
  alg\'ebriques simples. {\em Ann. of Math. (2)\/} 97, (1973), 499--571.

\bibitem{Bo98}
{\sc N.~Bourbaki}, {\em Lie groups and {L}ie algebras. {C}hapters 1--3\/},
  Elements of Mathematics (Berlin),  (Springer-Verlag, Berlin1998). Translated
  from the French, Reprint of the 1989 English translation.

\bibitem{BM96}
{\sc M.~Burger} and {\sc S.~Mozes}, {${\rm CAT}$}(-{$1$})-spaces, divergence
  groups and their commensurators. {\em J. Amer. Math. Soc.\/} 9, no.~1,
  (1996), 57--93.

\bibitem{BM00}
{\sc M.~Burger} and {\sc S.~Mozes}, Groups acting on trees: from local to
  global structure. {\em Inst. Hautes \'Etudes Sci. Publ. Math.\/} , no.~92,
  (2000), 113--150 (2001).

\bibitem{CM11}
{\sc P.-E. Caprace} and {\sc N.~Monod}, Decomposing locally compact groups into
  simple pieces. {\em Math. Proc. Cambridge Philos. Soc.\/} 150, no.~1, (2011),
  97--128.

\bibitem{CRW13}
{\sc P.-E. Caprace}, {\sc C.~Reid}, and {\sc G.~Willis}, Locally normal
  subgroups of simple locally compact groups. ArXiv:1303.6755 [math.GR],
  http://arxiv.org/abs/1303.6755.

\bibitem{CRW_1_13}
{\sc P.-E. Caprace}, {\sc C.~Reid}, and {\sc G.~Willis}, Locally normal
  subgroups of totally disconnected groups. {P}art {I}: {G}eneral theory.
  ArXiv:1304.5144 [math.GR], http://arxiv.org/abs/1304.5144.

\bibitem{CRW_2_13}
{\sc P.-E. Caprace}, {\sc C.~Reid}, and {\sc G.~Willis}, Locally normal
  subgroups of totally disconnected groups. {P}art {II}: {C}ompactly generated
  simple groups. ArXiv:1401.3142 [math.GR], http://arxiv.org/abs/1401.3142.

\bibitem{CCLTV11}
{\sc R.~Cluckers}, {\sc Y.~Cornulier}, {\sc N.~Louvet}, {\sc R.~Tessera}, and
  {\sc A.~Valette}, The {H}owe-{M}oore property for real and {$p$}-adic groups.
  {\em Math. Scand.\/} 109, no.~2, (2011), 201--224.

\bibitem{GW01}
{\sc H.~Gl{\"o}ckner} and {\sc G.~Willis}, Uniscalar {$p$}-adic {L}ie groups.
  {\em Forum Math.\/} 13, no.~3, (2001), 413--421.

\bibitem{HR79}
{\sc E.~Hewitt} and {\sc K.~Ross}, {\em Abstract harmonic analysis. {V}ol.
  {I}\/}, vol. 115 of {\em Grundlehren der Mathematischen Wissenschaften
  [Fundamental Principles of Mathematical Sciences]\/},  (Springer-Verlag,
  Berlin1979), second edn.

\bibitem{L65}
{\sc M.~Lazard}, Groupes analytiques {$p$}-adiques. {\em Inst. Hautes \'Etudes
  Sci. Publ. Math.\/} , no.~26, (1965), 389--603.

\bibitem{LM87}
{\sc A.~Lubotzky} and {\sc A.~Mann}, Powerful {$p$}-groups. {II}. {$p$}-adic
  analytic groups. {\em J. Algebra\/} 105, no.~2, (1987), 506--515.

\bibitem{Mar91}
{\sc G.~A. Margulis}, {\em Discrete subgroups of semisimple {L}ie groups\/},
  vol.~17 of {\em Ergebnisse der Mathematik und ihrer Grenzgebiete (3) [Results
  in Mathematics and Related Areas (3)]\/},  (Springer-Verlag, Berlin1991).

\bibitem{M96}
{\sc O.~V. Mel{\cprime}nikov}, Profinite groups with finitely generated {S}ylow
  subgroups. {\em Dokl. Akad. Nauk Belarusi\/} 40, no.~6, (1996), 34--37, 123.

\bibitem{Plat66}
{\sc V.~P. Platonov}, Locally projectively nilpotent subgroups and nilelements
  in topological groups. {\em Izv. Akad. Nauk SSSR Ser. Mat.\/} 30, (1966),
  1257--1274.

\bibitem{R13}
{\sc C.~Reid}, The generalised pro-fitting subgroup of a profinite group. {\em
  Comm. Algebra\/} 41, no.~1, (2013), 294--308.

\bibitem{RZ00}
{\sc L.~Ribes} and {\sc P.~Zalesskii}, {\em Profinite groups\/}, vol.~40 of
  {\em Ergebnisse der Mathematik und ihrer Grenzgebiete. 3. Folge. A Series of
  Modern Surveys in Mathematics [Results in Mathematics and Related Areas. 3rd
  Series. A Series of Modern Surveys in Mathematics]\/},  (Springer-Verlag,
  Berlin2010), second edn.

\bibitem{Se64}
{\sc J.-P. Serre}, {\em Lie algebras and {L}ie groups\/}, vol. 1500 of {\em
  Lecture Notes in Mathematics\/},  (Springer-Verlag, Berlin1992), second edn.
  1964 lectures given at Harvard University.

\bibitem{Ti74}
{\sc J.~Tits}, {\em Buildings of spherical type and finite {BN}-pairs\/},
  Lecture Notes in Mathematics, Vol. 386,  (Springer-Verlag, Berlin1974).

\bibitem{W_1_14}
{\sc P.~Wesolek}, Elementary totally disconnected locally compact groups.
  ArXiv:1405.4851 [math.GR], http://arxiv.org/abs/1405.4851.

\bibitem{Will07}
{\sc G.~Willis}, Compact open subgroups in simple totally disconnected groups.
  {\em J. Algebra\/} 312, no.~1, (2007), 405--417.

\bibitem{Wil98}
{\sc J.~Wilson}, {\em Profinite groups\/}, vol.~19 of {\em London Mathematical
  Society Monographs. New Series\/},  (The Clarendon Press Oxford University
  Press, New York1998).

\end{thebibliography}
\end{document}